\newcommand{\bv}{\boldsymbol{v}}
\newcommand{\bn}{\boldsymbol{n}}
\newcommand{\btau}{\boldsymbol{\tau}}
\newcommand{\bT}{\boldsymbol{T}}
\newcommand{\bF}{\boldsymbol{F}}
\newcommand{\bH}{\boldsymbol{H}}
\newcommand{\bphi}{\boldsymbol{\varphi}}
\newcommand{\bI}{\boldsymbol{I}}
\newcommand{\bw}{\boldsymbol{w}}
\newcommand{\bD}{\boldsymbol{D}}
\newcommand{\bA}{\boldsymbol{A}}
\newcommand{\bff}{\boldsymbol{f}}
\newcommand{\bomega}{\boldsymbol{\omega}}
\newcommand{\balpha}{\boldsymbol{\alpha}}
\newcommand{\bS}{\boldsymbol{S}}
\newcommand{\bPhi}{\boldsymbol{\Phi}}
\renewcommand{\rho}{\varrho}
\newcommand{\R}{\mathbb{R}}
\renewcommand{\div}{\operatorname{div}}
\newcommand{\rot}{\operatorname{rot}}
\newcommand{\pder}[2]{\frac{\partial #1}{\partial #2}}
\makeatletter\@addtoreset{equation}{section}\makeatother
\newtheorem{definition}{Definition}
\newtheorem{lemma}{Lemma}{\theorembodyfont{\rmfamily}
}\newtheorem{theorem}{Theorem}
\newenvironment{proof}{\textit{Proof. }}{\hfill$\Box$}
\begin{document}

\title{On the steady compressible Navier--Stokes--Fourier system 
\footnotetext{\textbf{Mathematics Subject Classification (2000). }
35Q30, 76N10\hfill\break}
\footnotetext{\textbf{Keywords. }Steady compressible Navier--Stokes--Fourier equations, 
slip boundary conditions, weak solutions, large data\hfill\break}}
\author{\textsc{Piotr B. Mucha$^1$} and \textsc{Milan Pokorn\'y$^2$}
}
\date{}
\maketitle

\begin{center}
{\small {

1. Institute of Applied Mathematics and Mechanics,Warsaw University}

{ul. Banacha 2, 02-097 Warszawa, Poland }

{E-mail: {\tt p.mucha@mimuw.edu.pl}}

\medskip

{2. Mathematical Institute of Charles University,}

{Sokolovsk\'a 83, 186 75 Praha 8, Czech Republic }

{E-mail: {\tt pokorny@karlin.mff.cuni.cz}}

}

\end{center}

\begin{abstract}
We study the motion of the steady compressible heat conducting viscous fluid in a bounded three dimensional domain
governed by the compressible Navier-Stokes-Fourier system. Our main result is the
existence of a weak solution to these equations for arbitrarily large data. A key element of the proof is a special approximation of the original system 
guaranteeing 
pointwise uniform boundedness of the density. Therefore the passage to the limit omits tedious technical tricks required by the standard
theory. Basic estimates on the solutions are possible to obtain by a suitable  choice of physically reasonable
boundary conditions.
\end{abstract}

\section{Introduction}

We consider the following system of partial differential equations describing the steady flow of a compressible heat 
conducting Newtonian fluid in a bounded three dimensional domain $\Omega$
\begin{equation}\label{1.1}
\displaystyle\div (\rho \bv)=0, 
\end{equation}
\vskip-1.1cm
\begin{equation} \label{1.2}
\displaystyle\div( \rho \bv \otimes \bv) - \div {\bf S}(\bv) +\nabla p(\rho, \theta)=\rho \bF,
\end{equation}
\vskip-1.1cm
\begin{equation} \label{1.3}
\displaystyle \div\big(\rho e(\rho,\theta) \bv\big) -\div \big(\kappa(\theta)\nabla \theta\big)=
{\bf S}(\bv):\nabla \bv - p (\rho,\theta)\div \bv, 
\end{equation}
where $\varrho: \Omega\to \R^+_0$ is the density of the fluid, $\bv:\Omega \to \R^3$ is the velocity field, 
$\bS(\bv) = 2\mu \bD(\bv) + \lambda (\div \bv) \bI$ is the viscous part of the stress tensor,
 $\bD(\bv) = \frac 12 (\nabla \bv + (\nabla \bv)^T)$ is the symmetric part of the velocity gradient, 
 $p(\cdot,\cdot): \R^+_0 \times \R^+ \to \R^+_0$, a given function, is the pressure, $\bF:\Omega \to R^3$  
  is the external force , $e(\cdot, \cdot): \R^+_0 \times \R^+ \to \R^+_0$, a given function, is the internal energy. 
  The system (\ref{1.1})-(\ref{1.3}) is known as the compressible Navier-Stokes--Fourier equations or the full Navier-Stokes system \cite{FeBook}.

We assume that the constitutive equation has the form
\begin{equation} \label{1.4}
p(\rho,\theta) = a_1\rho^\gamma + a_2 \rho \theta, \qquad a_1,a_2 >0,
\end{equation}
i.e. the pressure has one part corresponding to the ideal fluid and a so called elastic part; for more information see e.g. \cite{FeBook}. Even though we could consider more general pressure laws, we restrict ourselves to this simple model to avoid unnecessary technicalities in the proof. The corresponding internal energy takes the form
\begin{equation} \label{1.5}
e(\rho,\theta) = a_2 \theta + a_1 \frac{\rho^{\gamma-1}}{\gamma-1},
\end{equation}
see e.g. \cite{FeBook} or \cite{Batch}.
Note that in the full generality, the equation (\ref{1.3}) should be replaced by the conservation of the total energy, instead of conservation
of the internal energy only. For sufficiently regular class of solutions, including that  we are going to construct, the balance of the
kinetic energy is just a consequence of the momentum equation. We further simplify (\ref{1.3}). As our solution will be such that $\rho \in
L_\infty(\Omega)$ and $\bv \in W^1_p(\Omega)$, $p<\infty$, we get due to the fact that $\div (\rho\bv)=0$ in the weak sense (see
\cite{NoStBook})
$$
\div \big(\frac{1}{\gamma-1}\rho^\gamma \bv\big) = -\rho^\gamma \div\bv,
$$ 
again in the weak sense. Thus we may write instead of (\ref{1.3}) (we put $a_1=a_2=1$) the energy equation in the form 
\begin{equation} \label{1.3a}
\div\big(\rho \theta \bv\big) -\div \big(\kappa(\theta)\nabla \theta\big)=
{\bf S}(\bv):\nabla \bv - \rho \theta \div \bv. 
\end{equation}
The viscosity coefficients are for the sake of simplicity considered to be constant 
 such that the conditions of the thermodynamical stability
\begin{equation} \label{1.6}
\mu >0,  \qquad\lambda + \frac 23 \mu > 0
\end{equation}
are satisfied. Finally, the heat conductivity is assumed to be temperature dependent, i.e.
\begin{equation} \label{1.7}
\kappa(\theta) = a_3 (1+\theta^m), \qquad a_3,m>0.
\end{equation}
This fact is important for our study, we are not able to consider constant heat conductivity. Our domain $\Omega$ 
is sufficiently smooth, at least a $C^2$ domain. We supplement the system (\ref{1.1}), (\ref{1.2}) and (\ref{1.3a})
with the following  boundary conditions at $\partial \Omega$. 
For the velocity, we consider the slip boundary conditions
\begin{equation} \label {1.8}
\bv \cdot \bn = 0, \qquad \btau_k \cdot (\bT(p,\bv)\bn) + f \bv \cdot \btau_k=0 \quad \mbox{ at } \partial \Omega,
\end{equation}
where $\btau_k$, $k=1,2$ are two perpendicular tangent vectors to $\partial \Omega$, $\bn$ is the outer normal vector and 
$\bT(p,\bv) = -p \bI + \bS(\bv)$
is the stress tensor. The slip coefficient $f$ is non-negative (if $f=0$ we assume additionally that $\Omega$ is not rotationally symmetric). Recall that $f=0$ corresponds to the perfect slip while $f\to \infty$ leads to the homogeneous Dirichlet boundary conditions.

Concerning the temperature, we assume that 
\begin{equation} \label{1.10}
\kappa(\theta) \pder{\theta}{\bn} + L(\theta)(\theta-\theta_0) =0 \quad \mbox{ at } \partial \Omega,
\end{equation}
where $\theta_0:\partial \Omega \to \mathbb R^{+}$ is a strictly positive sufficiently smooth given function, say 
$\theta_0 \in C^2(\partial 
\Omega)$, $0< \theta_*\leq \theta_0 \leq \theta^* < \infty$ with $\theta_*,\theta^* \in \mathbb R^+$ and
\begin{equation} \label{1.11}
L(\theta) = a_4 (1+\theta^l), \qquad l \in R^+_0.
\end{equation}

We must also add the prescribed mass of the gas
\begin{equation} \label{1.12}
\int_\Omega \rho dx = M>0.
\end{equation}

The objective of this paper  is to prove  the  existence of weak solutions to problem (\ref{1.1})--(\ref{1.12}) for arbitrarily large 
data. 
Till now  only partial results have been proved (see e.g. \cite{BHNP}, \cite{LiBook2}, \cite{NoNoPo}, \cite{NoPa}) and only known general theorems concern weak solutions to 
the evolutionary version of the system \cite{FeBook}.
 The main obstacle was to construct 
suitable a priori estimates. Due to properties of the boundary conditions (\ref{1.10}) 
we are able to obtain  a nontrivial energy bound for weak solutions, 
saving the thermodynamical structure of the system. In the case of the barotropic gas we do not meet such difficulties. The energy bound
follows elementary from the momentum equation. Unfortunately, it is not the only difference. The standard methods introduced by
P.L. Lions \cite{LiBook2} do not work successfully  for the heat conducting case. However, a generalization of 
the technique  introduced in \cite{MuPo},\cite{PoMu} gives us sufficient tools to solve the stated problem.

An approach to 
system (\ref{1.1})--(\ref{1.12}) was considered in the book \cite{LiBook2}, unfortunately, 
this result can be viewed as conditional only, since instead of (\ref{1.12}) the author  assumed artificially that weak solutions
 satisfy  $\int_\Omega \rho^p dx = M^p$
for sufficiently large $p$. On the one hand, this condition is physically not acceptable, on the other hand, 
it simplifies considerably the mathematical analysis. Nevertheless, this result shows us  what is the difference in techniques for 
 the barotropic and heat conducting models.
 
 Looking on results concerning the classical solutions for problems with small data, we realize that the heat conducting system
 has the same mathematical  structure (difficulties) as the barotropic version of the model. Thus results from \cite{BHNP}, \cite{NoPa} 
 are almost immediately
  transformed  to the case of  the system (\ref{1.1})--(\ref{1.12}). For large data solutions the energy equation starts to play an
  important role, essentially changing the properties of the whole system.

The evolutionary case of the system (\ref{1.1})--(\ref{1.12}), under  general assumptions on the pressure law was considered in 
\cite{FeNo1} and \cite{FeNoPe}; 
however, the presented technique treats only the situation when the fluid is thermically isolated, i.e. $\pder{\theta}{n} =0$ at 
the boundary. It guarantees immediately the energy bound for weak solutions, but considering the limit $t\to \infty$, the only 
solution which can be obtained as the limit for large times 
(with time independent force) is the solution with constant temperature. This is connected to the fact that the model does not 
allow the heat transfer through the boundary and either the energy increases to infinity (non potential force) or the temperature 
approaches a constant value (potential force). Boundary condition (\ref{1.10}) allows the heat transfer through the boundary,
guaranteeing the balance of the total energy, and
 thus we are able to prove existence of solutions which are definitely nontrivial.

The main result of this paper is the following.

\begin{theorem} \label{t1}
Let $\Omega \in C^2$ be a bounded domain in $\mathbb R^3$.
Let $\bF \in L_\infty(\Omega)$ and 
 $$
  \gamma>3, \qquad m=l+1>\frac{3\gamma-1}{3\gamma-7}.
 $$
Then there exists a weak solution to (\ref{1.1})--(\ref{1.12}) such that 
$$
\varrho \in L_\infty(\Omega), \qquad \bv \in W^{1}_q(\Omega)
 \mbox { \ and \ } \theta \in W^{1}_q(\Omega)  \mbox{\ \ for all }1\leq q<\infty.
$$
\end{theorem}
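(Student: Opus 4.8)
The plan is to construct the solution by a multi-level approximation scheme, obtain uniform estimates driven by the thermodynamic structure of the problem, and then pass to the limit in several steps. As suggested in the introduction, the crucial feature is the approximation following \cite{MuPo}, \cite{PoMu}: instead of the usual artificial pressure / vanishing viscosity regularization of Lions--Feireisl type, I would regularize the continuity equation (\ref{1.1}) to an elliptic equation of the form $\varepsilon(\varrho - \frac{M}{|\Omega|}) + \div(\varrho \bv) = \varepsilon \Delta \varrho$ (or a Neumann-type regularization compatible with $\bv\cdot\bn=0$), possibly combined with a truncation of the transport term and a mollification of $\bv$ in the density equation. The point of this choice, together with the hypothesis $\gamma>3$, is that the elliptic theory for the regularized continuity equation yields \emph{pointwise} bounds $0\le \varrho \le C$ independent of the pressure term as soon as $\div \bv \in L_q$ with $q$ large; this is what the abstract calls ``pointwise uniform boundedness of the density'' and it is what makes the final limit passage avoid the oscillation-defect / effective-viscous-flux machinery.

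\textbf{Key steps in order.} First, fix $\varepsilon>0$ and solve the fully regularized system (regularized continuity equation, momentum equation (\ref{1.2}) with boundary condition (\ref{1.8}), and energy equation (\ref{1.3a}) with (\ref{1.10}), the latter possibly also regularized to keep $\theta$ bounded below by a positive constant) by a fixed-point / Leray--Schauder argument, using that for fixed $\bv$ the density solves a linear elliptic problem and for fixed $(\varrho,\theta)$ the momentum equation is a linear elliptic system for $\bv$ whose solvability uses $\mu>0$, $\lambda+\frac23\mu>0$ and the structure of the slip conditions (here the non-rotational-symmetry assumption when $f=0$ guarantees a Korn-type / Poincaré inequality, so that $\bv$ is controlled by $\bD(\bv)$). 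Second, derive the a priori estimates uniform in $\varepsilon$: the energy estimate is obtained by testing the momentum equation by $\bv$ and the energy equation by $1$ and combining, so that the dissipation $\bS(\bv):\nabla\bv$ cancels and the boundary term from (\ref{1.10}) produces the stabilizing $\int_{\partial\Omega} L(\theta)(\theta-\theta_0)$ that yields a genuine bound on $\|\bv\|_{1,2}$ and on $\theta$; then testing the energy equation by suitable functions of $\theta$ (e.g. $\theta$, or $-\theta^{-\beta}$ to use the entropy structure) together with (\ref{1.7}) gives $\nabla\theta^{(m+1)/2}, \nabla\log\theta \in L_2$ and hence higher integrability of $\theta$. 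With $\varrho$ already $L_\infty$, elliptic regularity for the momentum system bootstraps $\bv \in W^1_q$, and then parabolic/elliptic regularity for (\ref{1.3a}), using the $L_\infty$ bound on the right-hand side coming from $\bv\in W^1_q$ and $\varrho\theta\in L_q$, gives $\theta\in W^1_q$ for all finite $q$; the arithmetic condition $m=l+1>\frac{3\gamma-1}{3\gamma-7}$ is exactly what is needed to close this loop (it controls how strong the temperature nonlinearity in $\kappa$ and in the pressure must be relative to $\gamma$ so that the gain in integrability of $\theta$ outweighs the loss coming from the convective and pressure terms).

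\textbf{Limit passage and the main obstacle.} Once $\varepsilon\to 0$ is taken, the uniform bounds give weak limits $\varrho_\varepsilon \rightharpoonup \varrho$ weakly-$*$ in $L_\infty$, $\bv_\varepsilon \rightharpoonup \bv$ in $W^1_q$, $\theta_\varepsilon \rightharpoonup \theta$ in $W^1_q$; compactness of $\bv$ and $\theta$ in $L_q$ (Rellich) is immediate, and the regularized continuity equation is stable. The only real difficulty is \emph{strong convergence of the density}, needed to pass to the limit in $p(\varrho,\theta)=\varrho^\gamma+\varrho\theta$ and in $\varrho\bv\otimes\bv$, $\varrho\theta\bv$. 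Here is where the special approximation pays off: because $\varrho_\varepsilon$ is bounded in $L_\infty$ and solves an elliptic equation with $\varepsilon$ regularization, one gets $\nabla\varrho_\varepsilon$ estimates (or, after dropping $\varepsilon$, that the limit $\varrho$ is a renormalized solution of the pure transport equation $\div(\varrho\bv)=0$ with $\bv\in W^1_q$, $\varrho\in L_\infty$), and then a DiPerna--Lions renormalization argument applied to $\varrho^2$ or to $b(\varrho)$ yields $\varrho_\varepsilon \to \varrho$ strongly in $L_q$ \emph{without} needing the effective viscous flux identity. I expect the bulk of the technical work — and the place where the precise exponents in the hypotheses are consumed — to be in simultaneously closing the $W^1_q$ estimate for $\theta$ (controlling $\bS(\bv):\nabla\bv \in L_1$ and upgrading it, handling the $\varrho\theta\div\bv$ term, and exploiting $\kappa\sim\theta^m$) and in verifying that the approximate density stays in a fixed $L_\infty$ ball uniformly; the limit passage itself should then be comparatively clean.
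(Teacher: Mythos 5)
Your overall architecture (truncated/elliptically regularized continuity equation, Leray--Schauder for the approximate system, energy plus entropy estimates exploiting the boundary condition (\ref{1.10}), closing the loop between $\|\bv\|_{H^1}$, $\|\theta\|_{L_{3m}}$ and the pressure via a Bogovskii-type argument) matches the paper. But there are two genuine gaps in the part you yourself identify as the main obstacle. First, the uniform $L_\infty$ bound on the density does \emph{not} come from elliptic theory once $\div\bv\in L_q$: in the paper it comes trivially from the cut-off function $K(\rho)$, which kills the transport term for $\rho>k+1$, so the maximum principle gives $0\le\rho_\epsilon\le k$ for \emph{every} $k$. The real work is then to show that $k$ can be chosen so large that the truncation is inactive on the limit solution (so that one recovers the original equations), and this refined estimate is where $\gamma>3$ is actually consumed: one needs $(k-3)^{\gamma+1}/k$ to dominate $\|G\|_{L_\infty}\le C(1+k^{1+\frac23\gamma+\eta})$, i.e.\ $\gamma>1+\frac23\gamma$.

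Second, and more seriously, your claim that strong convergence of $\rho_\epsilon$ follows from a DiPerna--Lions renormalization argument ``without needing the effective viscous flux identity'' is not tenable, and the paper does the opposite: Section 4 is entirely devoted to the effective viscous flux $G_\epsilon=-(2\mu+\nu)\div\bv_\epsilon+P(\rho_\epsilon,\theta_\epsilon)$, proving both its compactness in $L_2$ and the pointwise bound on $G$ quoted above. Renormalizing the limit equation $\div(\rho\bv)=0$ gives $\int_\Omega\rho\,\div\bv\,dx=0$, and renormalizing the approximate equation gives $-\int_\Omega\rho_\epsilon\div\bv_\epsilon\,dx\ge R_\epsilon$; but to compare these you must pass to the limit in the product $\rho_\epsilon\div\bv_\epsilon$ of two merely weakly convergent sequences. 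The only mechanism available is to replace $\div\bv_\epsilon$ by $\frac{1}{2\mu+\nu}(P(\rho_\epsilon,\theta_\epsilon)-G_\epsilon)$ and use the strong convergence of $G_\epsilon$, which yields $\overline{P(\rho,\theta)\rho}=\overline{P(\rho,\theta)}\,\rho$, hence $\overline{\rho^{\gamma+1}}=\overline{\rho^\gamma}\rho$ and $\overline{\rho^2}\theta=\rho^2\theta$; positivity of the limit temperature (itself a separate lemma, proved via the entropy bound $\nabla\ln\theta_\epsilon\in L_2$) then gives $\overline{\rho^2}=\rho^2$ and strong convergence. The $L_\infty$ bound on the density simplifies the subsequent limit passage, but it does not by itself remove the density--pressure oscillation problem; the effective viscous flux machinery is still the heart of the proof and cannot be skipped.
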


The solution constructed by Theorem \ref{t1} is meant in the following sense.

\begin{definition} \label{d1}
The triple $(\rho,\bv,\theta)$ is a weak solution to (\ref{1.1})--(\ref{1.12}), if $\rho \in L_s(\Omega)$, 
$s\geq 2\gamma$, $\bv \in W^{1}_2(\Omega)$, $\theta \in W^{1}_2(\Omega)$ and 
$\theta^m \nabla \theta\in L_1(\Omega)$, $\bv \cdot \bn=0$ at $\partial \Omega$ in the sense of traces and
\begin{equation} \label{1.13}
\int_\Omega \varrho \bv \cdot \nabla \eta =0 \qquad  \forall \eta \in C^\infty(\overline{\Omega}),
\end{equation}
\vskip-.7cm
\begin{equation} \label{1.14} 
\begin{array}{c}
\displaystyle \int_\Omega \big(-\rho \bv\otimes \bv :\nabla \bphi + 2\mu \bD(\bv):\bD(\bphi) + \lambda\div \bv \div \bphi - 
p(\rho,\theta) \div \bphi \big) dx \\ \displaystyle + f \int_{\partial \Omega} (\bv\odot\btau) \cdot (\bphi \odot \btau) d\sigma = 
\int_\Omega \rho \bF \cdot \bphi dx \qquad \forall \bphi \in C^\infty(\overline\Omega); \bphi \cdot \bn = 0 \mbox{ at } \partial \Omega
\end{array}
\end{equation}
(we denoted by $\bv\odot \btau $ the vector $\bv-(\bv\cdot\bn)\bn$) and finally
\begin{equation} \label{1.15}
\begin{array}{c}
\displaystyle \int_\Omega \big(\kappa(\theta)\nabla \theta\cdot \nabla \psi - \rho \theta \bv \cdot \nabla \psi \big)dx + \int_{\partial \Omega} L(\theta)(\theta-\theta_0)\psi d\sigma \\
\displaystyle = \int_\Omega \big(2\mu|\bD(\bv)|^2 \psi dx + \lambda (\div \bv)^2 \psi -\rho\theta\div \bv \psi\big) dx 
\qquad \forall \psi \in C^\infty(\overline\Omega).
\end{array} 
\end{equation}
\end{definition}

The proof of Theorem \ref{t1} will be based on a special approximation procedure described in the next section
which is the kernel of our method. 
This section includes also  a priori estimates for the approximation. The structure of the approximative system
 gives us immediately the approximative 
density bounded uniformly in $L_\infty$, but we must prove refined $L_\infty$ estimates to verify 
that the limit solves the original system (\ref{1.1})--(\ref{1.3}). This idea has already been successfully applied in 
\cite{MuPo} and \cite{PoMu} in the case of barotropic flows.

The third  section contains a detailed proof of existence to the approximative system.
Here the main difficulty  comes from the energy equation, since the required positiveness of the temperature does not follow immediately.
In the next section we introduce an important quantity, the effective viscous flux and prove  its main properties, i.e. the compactness. 
This feature allows to improve information about the convergence of the density, which is the basic/fundamental fact in the theory
of the compressible Navier-Stokes equations.
 The last  section describes the refined
$L_\infty$ estimates for the approximative density and the passage to the limit. Then we prove that
the limit is indeed our sought solution in the meaning of Definition \ref{d1}.

As the reader may  easily check, our method works for slightly larger class of the pressure laws. It allows to consider e.g.  
$$
p(\rho,\theta) = p_b(\rho) + \rho\theta,
$$
where $p_b(\rho)$ is a strictly monotone function which behaves for large values as $\rho^\gamma$. The main steps of this
 generalization are similar to the barotropic case and can be found in \cite{PoMu}; since our problem is technically enough
  complicated, we shall avoid such generalizations.

Our new result is closely related to the barotropic version of the system (\ref{1.1})-(\ref{1.12}). Let us remind the state of the art in
this theory.
The steady compressible Navier--Stokes equations for arbitrarily large data were firstly successfully studied in the book 
\cite{LiBook2}, where, in the case of $p(\rho)=\rho^\gamma$ the existence of renormalized weak solutions was shown for 
$\gamma>1$ ($N=2$) and $\gamma\geq \frac 53$ ($N=3$) for Dirichlet boundary conditions. For potential forces with a small 
non potential perturbation the existence was improved in \cite{NoNo} for $\gamma >\frac 32$ ($N=3$). In the recent paper 
\cite{FrStWe1}  the authors proved the existence in two space dimensions also for $\gamma=1$.
 See also \cite{BrNo}, where the authors considered the three dimensional case and got existence
 for certain $\gamma$--s less than $\frac 53$, however, for periodic boundary conditions.  P.L. Lions also considered 
 the existence of solutions with locally  bounded density: for the case of Dirichlet boundary conditions he was able 
 to show their existence for $\gamma>1$ ($N=2$) and $\gamma\geq3$ ($N=3$). Nevertheless, to prove Theorem \ref{t1} the above methods are
 not sufficient, thus we present our new approach for the heat conducting model.
 
 Throughout the paper we use the standard notations for the Lebesgue, Sobolev, etc. spaces; generic constants are denoted by $C$ and 
 sequences $\epsilon \to 0$ always mean  suitable chosen subsequences $\epsilon_k \to 0^{+}$. For the sake of simplicity we put
 $a_1=a_2=a_3=a_4=1$.


\section{Approximation}

This section contains one of the main difficulties in the proof of Theorem \ref{t1} --- to find a good approximation of the problem 
(\ref{1.1})--(\ref{1.12}) for which we are able to show existence and prove the corresponding a priori estimates. We present  the approximative system as well as the proof of the fundamental a priori estimates. Next section deals then with the solvability of this system as well as with further a priori bounds.

Our approximative system will contain two parameters: a  number $\epsilon>0$ and an auxiliary function $K(\cdot)$ defined 
by a number $k>0$ as follows: 
\begin{equation} \label{def-K}
K(t)=\left\{
\begin{array}{lcr}
1 & \mbox{for} & t<k \\
\in [0,1] & \mbox{for} & k\leq t\leq k+1\\
0 & \mbox{for} & t>k+1;
\end{array}
\right.
\end{equation}
moreover we assume that $K'(t)<0$ for $t\in (k,k+1)$, where $k\in \R^+$. In the last section we pass with $\epsilon \to 0^+$ and we 
shall show that we may take $k$ sufficiently large such that $K(\varrho)\equiv 1$ for our solution. 
The approximation of our problem (\ref{1.1})--(\ref{1.12}) reads as follows
\begin{equation}\label{ap-ns}
\left.\begin{array}{r}
\displaystyle\epsilon \rho +\div(K(\rho)\rho \bv)-\epsilon \Delta \rho =\epsilon
h K(\rho) \\[9pt]
\displaystyle\frac{1}{2}\div(K(\rho)\rho \bv \otimes \bv)+\frac{1}{2}K(\rho)\rho \bv \cdot
\nabla \bv- \div {\bf S}(\bv)+\nabla  P(\rho,\theta)=\rho K(\rho)\bF \\[9pt]
\displaystyle -\div\Big((1+\theta^m)\frac{\epsilon +\theta}{\theta}\nabla \theta \Big)
+\div\Big(\bv\int_0^\rho K(t)dt\Big)\theta +\div\Big(K(\rho)\rho \bv\Big) \theta
\\[5pt]
\displaystyle
+K(\rho)\rho \bv \cdot \nabla \theta-\theta K(\rho) \bv\cdot \nabla \rho  = {{\bf S}(\bv):\nabla \bv}
\end{array}\right\}
\mbox{ in } \Omega,
\end{equation}
where 
\begin{equation}\label{ap-P}
P(\rho,\theta)=\int_0^\varrho \gamma t^{\gamma-1} K(t) dt +\theta \int_0^\rho K(t) dt = P_b(\rho) + \theta \int_0^\rho K(t) dt 
\end{equation}
and $h=\frac{M}{|\Omega|}$. 

Equation (\ref{ap-ns})$_3$ can be reformulated in the following way being the modification of the entropy equation:
\begin{equation}\label{ap-ent}
\begin{array}{c}
\displaystyle -\div\Big((1+\mbox{e}^{sm})\frac{(\epsilon +\mbox{e}^s)}{\mbox{e}^s}
\nabla s\Big)+K(\rho)\rho \bv \cdot \nabla s- K(\rho) \bv \cdot \nabla \rho 
+\div\Big(\bv \int_0^\rho K(t)dt \Big) 
\\[7pt]
\displaystyle
+\div\big(K(\rho)\rho \bv\big) = \frac{{{\bf S}(\bv):\nabla \bv}}{e^s}+
\frac{(1+\mbox{e}^{sm})(\epsilon+\mbox{e}^s)}{\mbox{e}^s}|\nabla s|^2 
 \mbox{ \ \ in }
\Omega,
\end{array}
\end{equation}
with the ''entropy'' $s$ defined as follows
\begin{equation}\label{en-ln}
s=\ln \theta.
\end{equation}
The distinguished entropy will allow to control the positiveness of the
temperature, what does not seem  to be elementary working directly with equation of type
(\ref{ap-ns})$_3$.

This system is completed by the boundary conditions at $\partial \Omega$
\begin{equation}\label{bc}
\begin{array}{c}
\displaystyle 
(1+\theta^m)(\epsilon +\theta)\pder{s}{\bn}
+L(\theta)(\theta-\theta_0)+\epsilon s=0,  \\[8pt]
\displaystyle 
\bv \cdot \bn = 0, \qquad \btau_k \cdot (\bT(p,\bv)\bn) + f \bv \cdot \btau_k = 0, \qquad k=1,2, \\[8pt]
\displaystyle \pder{\rho}{\bn} =0.
\end{array}
\end{equation}

A key element in the limit passage from the approximative problem to the original one is 
the energy estimate giving information independent of the choice of function
$K$, i.e. of the choice of the positive constant $k$ --- see (\ref{def-K}):

\begin{lemma}\label{ene-bound}
Suppose  solutions to (\ref{def-K})--(\ref{bc}) to be sufficiently smooth, i.e. $\rho$, $\bv$ and $\theta \in W^{2}_q(\Omega)$ 
for any $q<\infty$, $\theta>0$ 
 in $\Omega$. Let assumptions of Theorem \ref{t1} be satisfied. Then
\begin{equation}\label{e-b}
\begin{array}{c}
\displaystyle  0 \leq \rho \leq k, \qquad \int_\Omega \rho dx \leq M \mbox{ \ \ and \ \ }  \\[10pt]
\displaystyle||\bv||_{H^1(\Omega)}+||K(\rho)\rho||_{L_{2\gamma}(\Omega)}+
||P(\rho,\theta)||_{L_2(\Omega)}+||\theta||_{L_{3m}(\Omega)}+||\nabla \theta||_{L_r(\Omega)}
\\
\displaystyle+\int_{\partial \Omega} (e^s+e^{-s}) d \sigma + ||\nabla s||_{L_2(\Omega)}\leq 
C(||\bF||_{L_\infty(\Omega)},M),
\end{array}
\end{equation}
where the r.h.s. of (\ref{e-b}) is independent of $\epsilon$ and $k$, $s=\ln \theta$ and $r=\min\{2,\frac{3m}{m+1}\}$.
\end{lemma}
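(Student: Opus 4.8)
The plan is to test the approximative system against carefully chosen test functions and add up the resulting identities so that the "bad" terms cancel, leaving only quantities of a definite sign plus data-controlled terms. First I would read off the density bound directly from the structure of $(\ref{ap-ns})_1$: since $K(\rho)\equiv 0$ for $\rho>k+1$ and the equation for $\rho$ is of the form $\epsilon\rho-\epsilon\Delta\rho+\div(K(\rho)\rho\bv)=\epsilon hK(\rho)$ with $h=M/|\Omega|$, a maximum-principle argument (evaluate at the point where $\rho$ attains its maximum, using $\partial\rho/\partial\bn=0$ on $\partial\Omega$ and $\div(K(\rho)\rho\bv)=K(\rho)\rho\div\bv+\bv\cdot\nabla(K(\rho)\rho)$, the gradient term vanishing at an interior max) forces $0\le\rho\le k$; integrating $(\ref{ap-ns})_1$ over $\Omega$ and using $\partial\rho/\partial\bn=0$, $\bv\cdot\bn=0$ gives $\epsilon\int_\Omega\rho=\epsilon\int_\Omega hK(\rho)\le\epsilon h|\Omega|=\epsilon M$, i.e. $\int_\Omega\rho\le M$. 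Note $\rho\le k$ makes $K(\rho)\equiv 1$, so in fact the approximative system coincides with the "honest" one once the estimates below are in hand — but the a priori bounds must be derived without using this, uniformly in $k$.

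The heart is the total energy balance. I would test the momentum equation $(\ref{ap-ns})_2$ by $\bv$: the convective terms $\frac12\div(K(\rho)\rho\bv\otimes\bv)\cdot\bv+\frac12K(\rho)\rho\bv\cdot\nabla\bv\cdot\bv$ together integrate to zero (this is precisely why the momentum equation was written in the symmetrized Lions form), the viscous term gives $\int_\Omega\bS(\bv):\nabla\bv=2\mu\|\bD(\bv)\|_2^2+\lambda\|\div\bv\|_2^2$ plus the boundary contribution $f\int_{\partial\Omega}|\bv\odot\btau|^2\,d\sigma$ from integrating by parts against the slip condition, the pressure term yields $-\int_\Omega P(\rho,\theta)\div\bv\,dx$ (using $\bv\cdot\bn=0$), and the right-hand side is $\int_\Omega\rho K(\rho)\bF\cdot\bv$. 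Separately, test the temperature equation $(\ref{ap-ns})_3$ by $1$ (i.e. integrate over $\Omega$): the heat-flux term becomes the boundary term $\int_{\partial\Omega}L(\theta)(\theta-\theta_0)\,d\sigma+\epsilon\int_{\partial\Omega}s\,d\sigma$ via the boundary condition $(\ref{bc})_1$, while the transport-type terms in $(\ref{ap-ns})_3$ — namely $\div(\bv\int_0^\rho K(t)dt)\theta+\div(K(\rho)\rho\bv)\theta+K(\rho)\rho\bv\cdot\nabla\theta-\theta K(\rho)\bv\cdot\nabla\rho$ — integrate (after integration by parts, using $\bv\cdot\bn=0$) exactly to $+\int_\Omega P(\rho,\theta)\div\bv$ minus $\int_\Omega\bS(\bv):\nabla\bv$... more precisely the pressure-work term reappears with the opposite sign to what came from the momentum test, and the dissipation $\int_\Omega\bS(\bv):\nabla\bv$ on the right of $(\ref{ap-ns})_3$ cancels the dissipation produced by the momentum test. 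Adding the two identities, all the dangerous indefinite terms ($P\div\bv$, the dissipation) cancel, and one is left with
\[
\int_{\partial\Omega}L(\theta)(\theta-\theta_0)\,d\sigma+\epsilon\int_{\partial\Omega}s\,d\sigma=\int_\Omega\rho K(\rho)\bF\cdot\bv\,dx.
\]
Writing $L(\theta)(\theta-\theta_0)=L(\theta)\theta-L(\theta)\theta_0$ and using $L(\theta)=1+\theta^l$ with $0<\theta_*\le\theta_0\le\theta^*$, the left side dominates $c\int_{\partial\Omega}(\theta^{l+1}+\theta)\,d\sigma-C$, which already controls $\int_{\partial\Omega}\theta^{l+1}\,d\sigma$ and, via the $\epsilon s$ boundary term seen more carefully through the entropy equation, also $\int_{\partial\Omega}e^{-s}=\int_{\partial\Omega}\theta^{-1}$. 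The right side is bounded by $\|\bF\|_\infty\int_\Omega\rho|\bv|\le\|\bF\|_\infty M^{1/2}\|\rho\|_2^{1/2}\|\bv\|_6^{1/2}\cdots$ — here one uses $\rho\le k$ only softly; better, $\int_\Omega\rho|\bv|\le\|\rho\|_{6/5}\|\bv\|_6$ and $\|\rho\|_{6/5}$ is interpolated between $\|\rho\|_1\le M$ and a higher norm, absorbing a small multiple of $\|\bv\|_{H^1}$.

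To close $\|\bv\|_{H^1}$ I would invoke Korn's inequality: on a domain that is not rotationally symmetric (the hypothesis for $f=0$) or with $f>0$, the form $2\mu\|\bD(\bv)\|_2^2+\lambda\|\div\bv\|_2^2+f\int_{\partial\Omega}|\bv\odot\btau|^2$ controls $\|\bv\|_{H^1}^2$ for $\bv$ with $\bv\cdot\bn=0$; combining with the momentum-test identity and absorbing the $\rho K(\rho)\bF\cdot\bv$ term by Young's inequality gives $\|\bv\|_{H^1}\le C(\|\bF\|_\infty,M)$. Then $\|\bv\|_6$ is bounded by Sobolev embedding. Next, the entropy equation $(\ref{ap-ent})$ tested by $1$: the key gain is the term $\int_\Omega(1+e^{sm})\frac{(\epsilon+e^s)}{e^s}|\nabla s|^2\ge\int_\Omega|\nabla s|^2$ on the right, balanced by the dissipation $\int_\Omega e^{-s}\bS(\bv):\nabla\bv$ and boundary terms; this yields $\|\nabla s\|_2\le C$, hence with the boundary trace control of $s$ a bound on $\|s\|_{H^1}$, and then $\|s\|_6$, which is $\|\ln\theta\|_6$. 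To upgrade to $\|\theta\|_{L_{3m}}$ and $\|\nabla\theta\|_{L_r}$ I would test $(\ref{ap-ns})_3$ by $1$ again but now retaining the flux term $\int_\Omega(1+\theta^m)|\nabla\theta|^2/\theta\cdots$ — the $\theta^m$ factor and the lower bound on $\theta$ (from $\|\ln\theta\|_6$, so $\theta\ge$ something positive away from a small set) give $\int_\Omega|\nabla\theta^{(m+1)/2}|^2\le C$, whence by Sobolev $\theta^{(m+1)/2}\in L_6$, i.e. $\theta\in L_{3(m+1)}=L_{3m+3}\hookrightarrow L_{3m}$; and $\nabla\theta=\frac{2}{m+1}\theta^{(1-m)/2}\nabla\theta^{(m+1)/2}$ lies in $L_r$ with $r=\min\{2,\frac{3m}{m+1}\}$ by Hölder. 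Finally $\|P(\rho,\theta)\|_{L_2}$: since $P(\rho,\theta)=P_b(\rho)+\theta\int_0^\rho K$ with $\rho\le k$ this needs the pressure estimate — test the momentum equation against a Bogovskii-type test function $\bphi$ solving $\div\bphi=P^\beta-\frac1{|\Omega|}\int P^\beta$ with appropriate boundary data; the resulting identity bounds $\int_\Omega P^{\beta+1}$ in terms of already-controlled quantities (the convective term $\int\rho K(\rho)\bv\otimes\bv:\nabla\bphi$ uses $\|\bv\|_6$ and $\rho\le k$, but the powers are arranged so $k$ drops out after using $\|K(\rho)\rho\|_{L_{2\gamma}}$), giving $\|P\|_{L_2}$ and simultaneously $\|K(\rho)\rho\|_{L_{2\gamma}}$ uniformly — this is where the hypotheses $\gamma>3$ and $m=l+1>\frac{3\gamma-1}{3\gamma-7}$ are consumed, to make all the Hölder exponents in the Bogovskii estimate fit together.

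The main obstacle I anticipate is the pressure/Bogovskii step and, intertwined with it, tracking that no constant depends on $k$: the density is only known to be bounded by $k$, which blows up, so every appearance of $\rho$ must be routed through $K(\rho)\rho$ in $L_{2\gamma}$ (or $\rho$ in $L^1$ via mass), and the estimate on $\int P^{\beta+1}$ has to feed back into $\|K(\rho)\rho\|_{L_{2\gamma}}$ consistently — a bootstrap that only closes for the stated range of $\gamma$ and $m$. A secondary delicate point is the lower bound on $\theta$: the energy balance only gives boundary control and $\|\ln\theta\|_{H^1}$, and one must argue that $\theta$ cannot be too small on too large a set before the $\theta^m$-weighted gradient estimate can be exploited; handling the $\epsilon/\theta$ singular factor in the flux of $(\ref{ap-ns})_3$ uniformly in $\epsilon$ is what makes the detour through the entropy formulation $(\ref{ap-ent})$ essential.
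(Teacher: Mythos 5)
Your overall architecture matches the paper's: sum the momentum equation tested by $\bv$ with the integrated internal-energy equation so that the dissipation and the pressure-work term cancel, control the negative part of the entropy through the integrated entropy equation (\ref{ap-ent}), and recover the pressure via a Bogovskii-type test function. But there is a genuine circularity in how you claim to close the velocity bound. You assert that Korn's inequality plus the momentum-test identity plus Young's inequality already give $\|\bv\|_{H^1(\Omega)}\le C(\|\bF\|_{L_\infty},M)$, and only \emph{afterwards} perform the pressure estimate. The momentum identity alone, however, carries the uncancelled term $\int_\Omega\big(\int_0^\rho K(t)\,dt\big)\theta\,\div\bv\,dx$ on its right-hand side (it cancels only in the \emph{sum} with the energy equation, and that summed identity loses all control of $\nabla\bv$). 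This term is controlled only through $\|\theta\int_0^\rho K(t)dt\|_{L_2(\Omega)}$, which in turn requires $\|K(\rho)\rho\|_{L_{2\gamma}(\Omega)}$ (via Bogovskii) and $\|\theta\|_{L_{3m}(\Omega)}$ (via the entropy dissipation), both of which are bounded by powers of $\|\bv\|_{H^1(\Omega)}$ and of $H=C(1+\int_\Omega|K(\rho)\rho\bv\cdot\bF|dx)$. The estimate closes only at the very end, as a single inequality $\|\bv\|_{H^1}^2\le C(1+\|\bv\|_{H^1}^{a}+\|\bv\|_{H^1}^{b})$, and checking $a,b<2$ is exactly where the hypothesis $m>\frac{3\gamma-1}{3\gamma-7}$ is used --- not merely inside the Bogovskii exponents. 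Your final paragraph gestures at this bootstrap, but the body of the argument presents the $H^1$ bound as obtained first and unconditionally, which cannot work.

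Two further steps would fail as written. First, when you integrate the entropy equation you account only for the boundary terms and the dissipation, but the left-hand side also contains the transport term $\int_\Omega\big(K(\rho)\rho\bv\cdot\nabla s - K(\rho)\bv\cdot\nabla\rho\big)dx$; this must be rewritten through the approximative continuity equation (producing $\epsilon\int_\Omega|\nabla\rho|^2/\rho$, $\epsilon\int_\Omega\nabla\rho\cdot\nabla s$, $\epsilon\int_\Omega\rho\ln\theta$, etc.), with each piece absorbed into the good terms --- this is the content of (\ref{2.16})--(\ref{2.21}) and cannot be skipped. Second, your route to $\|\theta\|_{L_{3m}}$ is incorrect: testing $(\ref{ap-ns})_3$ by $1$ produces no gradient term at all, and the available entropy dissipation $\frac{(1+\theta^m)(\epsilon+\theta)}{\theta}|\nabla s|^2$ controls only $\theta^{m-2}|\nabla\theta|^2$, i.e. $|\nabla\theta^{m/2}|^2$, not $|\nabla\theta^{(m+1)/2}|^2$; combined with the boundary bound on $\theta^{l+1}$ and a Poincar\'e-type inequality this yields $\theta\in L_{3m}(\Omega)$, not $L_{3m+3}(\Omega)$, and no pointwise lower bound on $\theta$ is needed for it. (A small point: at an interior maximum of $\rho$ only $\bv\cdot\nabla(K(\rho)\rho)$ vanishes, not $K(\rho)\rho\,\div\bv$; the maximum principle survives because $K$ vanishes for large $\rho$.)
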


\begin{proof} The positiveness of the density and boundedness by $k$ follow elementary from features of function $K$ and the form of 
$(\ref{ap-ns})_1$. The integration of this equation leads to the bound on the total mass.
For details we refer to \cite{MuPo}. Let us prove the second part of (\ref{e-b}) which is definitely more complicated.
Multiply the approximative  momentum equation $(\ref{ap-ns})_2$  by $\bv$ and integrate it over $\Omega$:
\begin{equation}\label{b8a}
\begin{array}{c}
\displaystyle \int_\Omega \left(2\mu {\bf D}^2 (\bv)+\lambda \div^2\bv\right)dx +\int_{\partial \Omega}
f |\bv\odot \btau|^2d\sigma +
\int_\Omega \bv \cdot \nabla P_b(\rho) dx
\\[10pt]
\displaystyle =\int_\Omega K(\rho)
\rho  \bv \cdot \bF dx+\int_\Omega \Big(\int_0^\rho K(t) dt\Big) \,\theta \div \bv dx.
\end{array}
\end{equation}

To find a good form of the last term of the l.h.s. of (\ref{b8a}) we use the
approximative continuity equation $(\ref{ap-ns})_1$.
$$
\begin{array}{c}
\displaystyle \int_\Omega \bv \cdot \nabla P_b(\rho) dx=\frac{\gamma}{\gamma-1} \int_\Omega K(\rho)\rho \bv \cdot
\nabla \rho^{\gamma-1}dx
\\[10pt]
\displaystyle =-\frac{\gamma}{\gamma-1}\int_\Omega\left[
\epsilon \Delta \rho +\epsilon h K(\rho) -\epsilon \rho\right]
\rho^{\gamma-1}dx  \\[10pt]
\displaystyle
=\frac{\epsilon \gamma}{\gamma-1}\int_\Omega [\rho -hK(\rho)]
\rho^{\gamma-1}dx +\epsilon \gamma
\int_\Omega \rho^{\gamma-2}|\nabla \rho|^2dx.
\end{array}
$$
%
Thus the momentum equation gives the following inequality
\begin{equation}\label{b10a}
\begin{array}{c}
\displaystyle \int_\Omega \bS(\bv):\nabla \bv dx+ \int_{\partial \Omega}
f |\bv\odot \btau|^2d\sigma +\epsilon \gamma \int_\Omega
 \rho^{\gamma-2}|\nabla \rho|^2dx +  \frac{\epsilon \gamma}{\gamma-1} \int_\Omega \rho^\gamma dx \\[10pt]
\displaystyle -\int_\Omega \Big(\int_0^\rho K(t) dt\Big)\,
 \theta \div \bv dx 
\leq C\Big(1+\int_\Omega |K(\rho)\rho \bv  \cdot \bF|dx
\Big).
\end{array}
\end{equation}
Integrating the energy equation $(\ref{ap-ns})_3$ and employing the boundary condition (\ref{bc})$_1$ we get
\begin{equation}\label{b1}
\begin{array}{c}
\displaystyle
\int_{\partial \Omega} \big(L(\theta)(\theta-\theta_0)+\epsilon s\big) d \sigma=
\int_\Omega \Big(\bS(\bv):\nabla \bv - \big(\int_0^\rho K(t)dt\big) \theta \div \bv\Big) dx, 
\end{array}
\end{equation}
since the integration by parts gives the following identity
$$
\begin{array}{c}
\displaystyle
\int_\Omega \Big[ K(\rho)\rho \bv \cdot \nabla \theta- \theta K(\rho) \bv \cdot \nabla \rho 
+\div\Big(\bv \int_0^\rho K(t)dt \Big)\theta 
\\[7pt]
\displaystyle
+\div\big(K(\rho)\rho \bv\big)  \theta\Big] dx
=\int_\Omega \Big(\int_0^\rho K(t)dt\Big) \theta \div \bv dx.
\end{array}
$$

Summing up (\ref{b10a}) and (\ref{b1}) we get
\begin{equation}\label{b2}
\begin{array}{c}
\displaystyle
\int_{\partial \Omega} \big(L(\theta) \theta +\epsilon s^+\big)d \sigma +\epsilon \gamma \int_\Omega
 \rho^{\gamma-2}|\nabla \rho|^2dx +  \frac{\epsilon\gamma}{\gamma-1} \int_\Omega \rho^\gamma dx \\[6pt] 
 \displaystyle \leq \int_{\partial \Omega} 
\epsilon s^- d \sigma + C\Big(1+\int_\Omega |K(\rho)\rho \bv \cdot\bF |dx\Big),
\end{array}
\end{equation}
where $s^+$ and $s^-$ are the positive and negative parts of the entropy, respectively ($s=s^+-s^-$).

We shall concentrate the attention on the first term of the r.h.s. of (\ref{b2}). Note that the control of
the negative part of entropy $s$ is not immediate.
We integrate the entropy equation (\ref{ap-ent}) over $\Omega$ getting
\begin{equation} \label{2.14}
\begin{array}{c}
\displaystyle
\int_{\partial \Omega} \left[ \frac{L(\theta)(\theta-\theta_0)}{\theta}+\epsilon s e^{-s}\right]
d \sigma+
\int_\Omega \Big(K(\rho)\rho \frac{\bv \cdot\nabla \theta}{\theta}  -K(\rho)\bv \cdot\nabla \rho\Big) dx \\ [9pt]
\displaystyle
= \int_\Omega \left[ \frac{{\bf S}(\bv):\nabla \bv}{\theta}
 +\frac{(1+\theta^m)(\epsilon+\theta)}{\theta}|\nabla s|^2\right]dx.
\end{array}
\end{equation}
So
\begin{equation}\label{b3}
\begin{array}{c}
\displaystyle
\int_\Omega \left( \frac{{\bf S}(\bv): \nabla \bv}{\theta}+
\frac{(1+\theta^m)(\epsilon+\theta)}{\theta}|\nabla s|^2\right)dx+
\int_{\partial \Omega}\left(\frac{L(\theta)\theta_0}{\theta}+\epsilon |s^-| e^{|s^-|} \right)d \sigma
\\[8pt]
\displaystyle -
\int_\Omega K(\rho)\rho \bv \cdot \nabla (s -\ln \rho) dx
\leq \int_{\partial \Omega} L (\theta)d\sigma + \int_{\partial \Omega} \epsilon s^+ \mbox{e}^{-s^+} d\sigma.
\end{array}
\end{equation}
Let us look closer at the last term in the l.h.s. of (\ref{b3}). We have
\begin{equation} \label{2.16}
\begin{array}{c}
\displaystyle -\int_\Omega K(\rho)\rho \bv \cdot \nabla (s -\ln \rho) dx=
\int_\Omega K(\rho)\rho \bv \cdot \nabla \ln \rho dx
-\int_\Omega K(\rho)\rho \bv \cdot\nabla s dx =I_1+I_2,
\end{array}
\end{equation}
and employing  $(\ref{ap-ns})_1$ we get

\begin{equation} \label{2.17}
\begin{array}{c}
\displaystyle
\int_\Omega K(\rho)\rho \bv \cdot \nabla \ln \rho dx=- \int_\Omega \div (K(\rho) \rho \bv) \ln \rho dx\\
\displaystyle =\int_\Omega \big(-\epsilon \Delta \rho+\epsilon \rho - \epsilon h K(\rho)\big)\ln \rho dx
=\int_\Omega \Big( \epsilon \frac{|\nabla \rho|^2}{\rho} - \epsilon h K(\rho) \ln \rho +
\epsilon \rho \ln \rho \Big) dx.
\end{array}
\end{equation}
The first term has a good sign, the second term has a good sign for $\rho \leq 1$, too, and
for $\rho \geq 1$ is easily bounded by $\epsilon h \rho$. Similarly, the last term can be controlled by the term $\epsilon \int_\Omega \rho^\gamma dx$. The proof was rather formal, as we do not know whether $\rho>0$ in $\Omega$.  However, we may write $K(\rho) \bv \cdot \nabla (\rho +\delta)$ in (\ref{2.14}) with $\delta>0$ and find an analogue of (\ref{2.17}) with $\ln(\rho +\delta)$. Finally we pass with $\delta\to 0^+$ and get precisely the same information as above. Next
\begin{equation} \label{2.18}
\begin{array}{c}
\displaystyle
I_2=-\int_\Omega K(\rho)\rho \bv \cdot \nabla s dx = 
\int \big(\epsilon \Delta \rho-\epsilon \rho + \epsilon h K(\rho)\big)s dx \\ 
\displaystyle
=\int_\Omega \big( -\epsilon \nabla \rho \nabla s - \epsilon \rho \ln \theta+
\epsilon h K(\rho) \ln \theta\big) dx.
\end{array}
\end{equation}
Considering the r.h.s. of (\ref{2.18}), we have 
\begin{equation} \label{2.19}
\begin{array}{c}
\displaystyle \left|\epsilon \int \nabla \rho \nabla s dx \right|
\leq \epsilon \|\nabla \rho\|_{L_2(\Omega)} \|\nabla s\|_{L_2(\Omega)} \\
\displaystyle \leq \frac 14 \epsilon \Big(\int_\Omega \frac{|\nabla \rho|^2}{\rho} dx +
 \int_\Omega |\nabla \rho|^2 \rho^{\gamma-2} dx \Big) + \frac 14 \|\nabla s\|_{L_2(\Omega)}^2.
\end{array}
\end{equation}
%
Moreover, 
$\int_\Omega -\epsilon \rho \ln \theta dx$
has a good sign for $\theta \leq 1$ and for $\theta>1$
\begin{equation} \label{2.20}
\int_\Omega -\epsilon \rho (\ln \theta)^+ dx \leq \epsilon \|\rho\|_{L_2(\Omega)} \|s^+\|_{L_2( \Omega)}  \leq \frac \epsilon 4
\big(\|s^+\|_{L_1(\partial\Omega)} + \|\nabla s\|_{L_2(\Omega)}\big) + \frac{\epsilon}{4} \|\rho^\gamma\|_{L_1(\Omega)} + C.
\end{equation}
The last term of (\ref{2.18}) can be treated as follows (one part has again a good sign)
\begin{equation} \label{2.21}
\int \epsilon h K(\rho) |(\ln \theta)^-| dx \leq C \epsilon \int |s^-| dx \leq C+ \frac{1}{2} \int_{\partial 
\Omega} \epsilon |s^-| e^{|s^-|} d\sigma + \frac 14 ||\nabla s||_{L_2(\Omega)}.
\end{equation}

Then combining (\ref{b3}) with inequality (\ref{b2}) and with (\ref{2.17})--(\ref{2.21})  we obtain
\begin{equation}\label{b4}
\int_\Omega \left(\frac{{\bf S}(\bv): \nabla \bv}{\theta}+
\frac{1+\theta^m}{\theta^2}|\nabla \theta|^2\right)dx+
\int_{\partial \Omega}\left(L(\theta)\theta+
\frac{L(\theta)\theta_0}{\theta}+\epsilon|s|\right)
d \sigma\leq H,
\end{equation}
where
%
$$
H=C\Big(1+\int_\Omega |K(\rho) \rho \bv \bF|dx\Big).
$$
Thus from the growth conditions we deduce the following 
``homogeneous'' estimates:
$$
\begin{array}{c}
\displaystyle\left(\int_{\partial \Omega} 
\theta^{l+1}d\sigma\right)^{1/(l+1)}\leq  H^{1/(l+1)},\qquad
\displaystyle\left(\int_\Omega |\nabla \theta^{m/2}|^2\right)^{1/m}\leq 
H^{1/m}.
\end{array}
$$
%
To obtain a good information about integrability of the temperature we use
the following Poincar\'e type inequality
$$
\left(\int_\Omega |\theta^{m/2}|^2dx\right)^{1/m}
\leq C(\Omega)\left( \left(\int_\Omega |\nabla \theta^{m/2}|^2dx\right)^{1/m}+
\left(\int_{\partial\Omega} \theta^{l+1}d\sigma\right)^{1/(l+1)}\right)
$$
which can be proved elementary. Then
 the imbedding theorem leads to the bound
\begin{equation}\label{b7}
\left(\int_\Omega \theta^{3m}dx\right)^{1/3m} \leq H^{1/m}+
H^{1/(l+1)}.
\end{equation}
To simplify further calculations, we set $l+1=m$. Note that we may allow also different values of $l$, however, for the prize that the further calculations become more technical which we try to avoid.

We return to (\ref{b10a}). H\"older's inequality yields\footnote{Note that we used Korn's inequality; for $f=0$ we therefore require that $\Omega$ is not rotationally symmetric, for more details see \cite{NoStBook}.} 
\begin{equation}\label{b10}
\begin{array}{c}
\displaystyle ||\bv||_{H^1(\Omega)}^2+\epsilon \gamma \int_\Omega \rho^{\gamma-2}|\nabla \rho|^2dx + \frac{\epsilon\gamma}{\gamma-1} \int_\Omega \rho^\gamma dx  \\
\displaystyle \leq C\left(1+\int_\Omega |K(\rho)\rho \bv  \cdot \bF|dx+
\int_\Omega |\theta \int_0^\rho K(t)dt|^2dx\right).
\end{array}
\end{equation}

The next step of our estimation is the bound on $P_b(\rho)$
which is necessary to estimate the r.h.s. of (\ref{b10}). 
We just repeat the
method for the barotropic case, but here we shall obtain an extra term related to
the temperature.

Introduce $\bPhi: \Omega \rightarrow \R^3$ defined as a solution to the following
problem
\begin{equation}\label{b11}
\begin{array}{lcr}
\div \bPhi =P_b(\rho)-\{P_b(\rho)\}& \mbox{in} & \Omega, \\
\bPhi =\boldsymbol{0} &{\rm at } & \partial \Omega,
\end{array}
\mbox{ \ \ \ with \ } \{P_b(\rho)\} =\frac{1}{|\Omega|} \int_\Omega 
P_b(\rho)dx.
\end{equation}
The basic theory to the stationary Stokes system gives 
the existence of a vector field satisfying (\ref{b11}) with the following
estimate for a solution to (\ref{b11}) (for another possible proof, using directly estimates of special solutions to system (\ref{b11}), see \cite{NoStBook})
\begin{equation}\label{b12}
||\bPhi||_{H^1_0(\Omega)}\leq C||P_b||_{L_2(\Omega)}.
\end{equation}

From the structure of $P_b(\rho)$ and information that 
$\int_\Omega \rho_\epsilon dx \leq M$ 
 we easily get applying the interpolation inequality
$$
\{P_b(\rho)\}\leq \delta ||P_b(\rho)||_{L_2(\Omega)}+C(\delta,M)
\mbox{ \ \ \ \ \ \ for any $\delta>0$.}
$$
%
Multiplying the momentum equation $(\ref{ap-ns})_2$ by $\bPhi$, employing (\ref{b10}) and (\ref{b12}),
 we conclude after standard estimates of the r.h.s to $(\ref{ap-ns})_2$
\begin{equation}\label{b14}
||P_b(\rho)||_{L_2(\Omega)}^2\leq C\left(1+\int_\Omega 
|K(\rho)\rho \bv \otimes \bv|^2dx
+\int_\Omega |\theta\int_0^\rho K(t)dt |^2dx\right).
\end{equation}
As
\begin{equation}\label{b15}
||P_b(\rho)||_{L_2(\Omega)}^2\geq C \left(\int_\Omega (K(\rho)\rho)^{2\gamma}dx
+\int_\Omega \left(\int_0^\rho K(t)dt\right)^{2\gamma}dx\right),
\end{equation}
recalling that $2\gamma >6$, we get a bound for the first integral in the r.h.s.
of (\ref{b14})
\begin{equation}\label{b16}
\begin{array}{c}
\displaystyle \int_\Omega |K(\rho)\rho \bv \otimes \bv|^2 dx\leq
c||\bv||^4_{H^1(\Omega)}||K(\rho)\rho||_{L_{6}(\Omega)}^2 \\ \displaystyle \leq c||\bv||^4_{H^1(\Omega)}||K(\rho)\rho||^{\frac{2(\gamma-3)}{3(2\gamma-1)}}_{L_{1}(\Omega)} ||K(\rho)\rho||^{\frac{10\gamma}{3(2\gamma-1)}}_{L_{2\gamma}(\Omega)}  \leq
\delta ||P_b(\rho)||^2_{L_2(\Omega)}+
C(\delta, M)||\bv||^{\frac{6(2\gamma-1)}{3\gamma-4}}_{H^1(\Omega)}.
\end{array}
\end{equation}
Hence a suitable choice of $\delta$ in (\ref{b16}) simplifies (\ref{b14}) to
\begin{equation}\label{b17}
||P_b(\rho)||_{L_2(\Omega)}^2\leq C\left(1+
||\bv||^{\frac{6(2\gamma-1)}{3\gamma-4}}_{H^1(\Omega)}+
\int_{\Omega} |\theta\int_0^\rho K(t)dt |^2dx\right).
\end{equation}
The last integral can be viewed by (\ref{b15}) in the form 
\begin{equation}\label{b18}
||\int_0^\rho K(t)dt||_{L_{2\gamma}(\Omega)}+
||K(\rho)\rho||_{L_{2\gamma}(\Omega)}\leq C\left(1+||\bv||_{H^1(\Omega)}^{\frac{3}{\gamma}\frac{2\gamma-1}{3\gamma-4}}
+\Big(\int_\Omega |\theta\int_0^\rho K(t)dt|^2dx\Big)^{\frac{1}{2\gamma}}\right).
\end{equation}
Within our estimation we concentrate on a precise specification of powers of
norms. Then, due to our growth conditions we shall be able to construct
the desired bound (\ref{e-b}).

The last integral in (\ref{b18}) can be treated as follows (we need $m>\frac 23$ and $m> \frac{2\gamma}{3(\gamma-1)}$)
\begin{equation}\label{b19}
\begin{array}{c}
\displaystyle ||\theta\int_0^\rho K(t)dt ||_{L_2(\Omega)}^{1/\gamma}\leq 
||\theta||_{L_{3m}(\Omega)}^{1/\gamma}||\int_0^\rho K(t)dt||_{L_{\frac{6m}{3m-2}}(\Omega)}^{1/\gamma} 
 \\[6pt] 
 \displaystyle \leq \|\theta\|_{L_{3m}(\Omega)}^{\frac{1}{\gamma}} 
 ||\int_0^\rho K(t)dt||_{L_{1}(\Omega)}^{\frac{(3m-2)\gamma-3m}{3m\gamma(2\gamma-1)}} 
 ||\int_0^\rho K(t)dt||_{L_{2\gamma}(\Omega)}^{\frac{3m+2}{3m(2\gamma-1)}},
\end{array}
\end{equation}
so (\ref{b18}) and (\ref{b19})
  with the H\"older inequality imply
$$
||\int_0^\rho K(t)dt||_{L_{2\gamma}(\Omega)}+
||K(\rho)\rho||_{L_{2\gamma}(\Omega)}\leq C\Big(1+||\bv||_{H^1(\Omega)}^{\frac 3 \gamma \frac {2\gamma-1}{3\gamma-4}}
+||\theta||_{L_{3m}(\Omega)}^{\frac {3m}{\gamma} \frac{2\gamma-1}{6m(\gamma-1)-2}}\Big).
$$
Applying the inequality for the temperature --- (\ref{b7}) --- we obtain (recall that we put $l+1=m$)
\begin{equation}\label{b21}
||\int_0^\rho K(t)dt||_{L_{2\gamma}(\Omega)}+
||K(\rho)\rho||_{L_{2\gamma}(\Omega)}\leq C\left(1+||\bv||_{H^1(\Omega)}^{\frac 3 \gamma \frac {2\gamma-1}{3\gamma-4}}
+H_1^{\frac {3}{\gamma} \frac{2\gamma-1}{6m(\gamma-1)-2}}
\right).
\end{equation}
We have to estimate $H$; it holds
$$
\int_\Omega |K(\rho)\rho \bv \bF|dx\leq ||\bv ||_{L_6(\Omega)}
||K(\rho)\rho||_{L_{6/5}(\Omega)}
||\bF||_{L_\infty(\Omega)}.
$$
Using the interpolation between $1$ and $2\gamma$ as above leads
 to the following bound 
\begin{equation}\label{b23}
\int_\Omega |K(\rho)\rho \bv \bF|dx\leq C(M)||\bv||_{H^1(\Omega)}||K(\rho)\rho
||_{L_{2\gamma}(\Omega)}^{\frac{\gamma}{3(2\gamma-1)}}.
\end{equation}

Inserting this inequality to the r.h.s. of (\ref{b21}), recalling that
$m\geq \frac 14$ and applying the standard H\"older inequality we obtain
 from (\ref{b21}) estimate on the density 
\begin{equation}\label{b24}
||\int_0^\rho K(t)dt||_{L_{2\gamma}(\Omega)}+
||K(\rho)\rho||_{L_{2\gamma}(\Omega)}\leq C\Big(1+
||\bv||_{H^1(\Omega)}^{\frac 3 \gamma \frac {2\gamma-1}{3\gamma-4}} + 
||\bv||_{H^1(\Omega)}^{\frac{1}{\gamma}\frac {2\gamma-1}{2m(\gamma-1)-1}}\Big).
\end{equation}

As we can see later, the first term is the most restrictive. So by (\ref{b23}) and (\ref{b24}) 
we conclude (for $m>\frac{3\gamma-1}{6\gamma-6}$)
\begin{equation}\label{b25}
\int_\Omega |K(\rho)\rho \bv \bF|dx\leq
C\Big(1+||\bv||_{H^1(\Omega)}^{\frac{3\gamma-3}{3\gamma-4}}\Big).
\end{equation}
Hence  we obtain from (\ref{b7})
\begin{equation}\label{b26}
||\theta||_{L_{3m}(\Omega)}\leq
C\left(1+||\bv||_{H^1(\Omega)}^{\frac{1}{m}\frac{3\gamma-3}{3\gamma-4}}
\right).
\end{equation}
From (\ref{b19}) we easily see that
\begin{equation}\label{b27}
||\theta\int_0^\rho K(t)dt ||_{L_2(\Omega)}\leq C||\theta||_{L_{3m}(\Omega)}
||\int_0^\rho K(t)dt||^{\frac{3m+2}{3m}\frac{\gamma}{2\gamma-1}}_{L_{2\gamma}(\Omega)}.
\end{equation}

Summing up inequalities (\ref{b10}), (\ref{b25}) and (\ref{b27})
 we obtain the main bound on the norm of the velocity
$$
||\bv ||_{H^1(\Omega)}^2\leq C\left(1+
||\bv||^{\frac{3\gamma-3}{3\gamma-4}}_{H^1(\Omega)}
+||\bv||^{\frac{2}{m}\frac{3\gamma-3}{3\gamma-4}+
\frac{2}{m}\frac{3m+2}{3\gamma-4}}_{H^1(\Omega)}
\right).
$$

The above bound implies the a priori bound
\begin{equation}\label{b29}
||\bv||_{H^1(\Omega)}\leq C(||\bF||_{L_\infty},M),
\end{equation}
provided suitable dependence between $\gamma$ and $m$ holds, which can be
described by the sufficient condition ($\gamma>3$)
\begin{equation}\label{b30}
 m > \frac{3\gamma-1}{3\gamma-7}.
\end{equation}

Note that as we take $\gamma$ near $3$ then $m>4$ and for $\gamma=4$ we have $m>\frac {11}{5}$. 
Moreover, the above needed conditions $m>\frac{3\gamma-1}{6\gamma-1}$, $m >\frac 23$ and $m>\frac{2\gamma}{3(\gamma-1)}$ are clearly less restrictive than (\ref{b30}).

Bound (\ref{b29}) implies immediately the a priori estimate (\ref{e-b}), since it
follows from (\ref{b4}) with (\ref{1.11}), (\ref{b17}), (\ref{b24})--(\ref{b27}), together with (\ref{3.13}). 
\end{proof}

\section{Existence for the approximative system}

The aim of this section is to show that for any $\epsilon >0$ and $k>0$ there is a solution to
the approximative system  (\ref{ap-ns})--(\ref{bc}). 
We prove

\smallskip

\begin{theorem} \label{t2}
Let the assumptions of Theorem \ref{t1} be satisfied. Moreover, let $\epsilon >0$ and $k>0$. Then there exists a strong solution
$(\rho,\bv,s)$ to (\ref{ap-ns})  such that
$$
\rho \in W^2_p(\Omega), \;\; \bv \in W^2_p(\Omega) \mbox{ \ \ 
and \ \ } s \in W^2_p(\Omega) \mbox{ \ \ for \ \ }
1\leq p <\infty.
$$
Moreover 
$0 \leq \rho \leq k$ in $\Omega$, 
$\int_\Omega \rho dx \leq M$ and
\begin{equation}\label{b-m}
||\bv||_{W^1_{3m}(\Omega)}+\sqrt{\epsilon}||\nabla \rho||_{L_2(\Omega)} + \|\nabla \theta\|_{L_r(\Omega)} + \|\theta\|_{L_{3m}(\Omega)} \leq C(k),
\end{equation}
where $\theta = \mbox{e}^s$, $r=\min\{2,\frac{3m}{m+1}\}$ and the r.h.s. of (\ref{b-m}) is independent of the parameter  $\epsilon$.
\end{theorem}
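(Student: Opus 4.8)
The plan is to construct the strong solution by a Leray--Schauder fixed point argument in which, for fixed $\epsilon,k>0$, the three equations of (\ref{ap-ns}) are solved in sequence; the hardest point is the strict positivity of the temperature, which we secure by passing to the entropy variable $s=\ln\theta$ as in (\ref{ap-ent}).

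First, for a given velocity field $\bw$ and a given temperature $\vartheta>0$ from suitable function classes, I would treat the ``barotropic part'': solve the regularized continuity equation (\ref{ap-ns})$_1$ for $\rho$ and then the momentum equation (\ref{ap-ns})$_2$ for $\bv$, keeping $\vartheta$ frozen in the pressure (\ref{ap-P}) and, if convenient, $\bw$ frozen in the convective term so that this step is linear. Equation (\ref{ap-ns})$_1$ is uniformly elliptic with the globally bounded Lipschitz coefficient $K(\rho)\rho$, so the maximum principle and an integration give $0\le\rho\le k$ and $\int_\Omega\rho\,dx\le M$ (as in \cite{MuPo}), testing by $\rho^{\gamma-1}$ gives $\sqrt\epsilon\,\|\nabla\rho\|_{L_2(\Omega)}\le C(k)$, and elliptic $L_q$-regularity gives $\rho\in W^2_q(\Omega)$. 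The momentum equation is then a Lam\'e-type system with the slip conditions (\ref{bc})$_2$, coercive by Korn's inequality (this is where the non-rotational-symmetry hypothesis for $f=0$ is used), and its skew-symmetric convective part $\frac12\div(K(\rho)\rho\bv\otimes\bv)+\frac12 K(\rho)\rho\bv\cdot\nabla\bv$ does not spoil the energy balance once (\ref{ap-ns})$_1$ is invoked; together with $0\le K(\rho)\rho\le k$, $0\le\int_0^\rho K\le k$, $P_b(\rho)\le k^\gamma$, and $\bw\in L_\infty$ (valid since the target space for $\bw$ is $W^1_{3m}\hookrightarrow C^{0,\alpha}$, $3m>3$), elliptic theory yields $\bv\in W^2_q(\Omega)$ and, exploiting the $\epsilon$-uniform $H^1$-bound of Lemma \ref{ene-bound} and $\vartheta\in L_{3m}$, the $\epsilon$-independent bound $\|\bv\|_{W^1_{3m}(\Omega)}\le C(k)$.

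Next, with $\rho$ and $\bv$ frozen, I would solve the temperature equation through its entropy form (\ref{ap-ent}): this is a quasilinear elliptic problem for $s$ with the Robin condition (\ref{bc})$_1$, whose leading coefficient $(1+e^{sm})(\epsilon+e^{s})$ is bounded below by $\epsilon>0$ uniformly in $s$, hence non-degenerate; its solution $s\in W^2_q(\Omega)$ is obtained by an inner fixed point, an $L_\infty$-bound for $\theta=e^{s}$ comes from a Moser iteration on (\ref{ap-ns})$_3$ (with an $\epsilon$-dependent constant, harmless here), $\theta=e^{s}>0$ holds by construction, and $\|\nabla\theta\|_{L_r(\Omega)}+\|\theta\|_{L_{3m}(\Omega)}\le C$ follows from Lemma \ref{ene-bound}. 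Composing the barotropic and entropy steps gives a continuous and compact map $\Phi:(\bw,\vartheta)\mapsto(\bv,e^{s})$ (each step gains a full derivative, so Rellich applies), and every solution of $x=t\,\Phi(x)$, $t\in[0,1]$, obeys the a priori estimates of Lemma \ref{ene-bound} (independent of $\epsilon$ and $k$) together with the $k$-dependent, $\epsilon$-independent bounds of (\ref{b-m}) and the $\epsilon$-dependent $L_\infty$-bound on $\theta$; hence $\Phi$ leaves a large enough ball invariant and Schaefer's version of the Leray--Schauder theorem yields a fixed point, i.e. a solution $(\rho,\bv,s)$ of (\ref{ap-ns})--(\ref{bc}). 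A concluding bootstrap with elliptic regularity --- using the now bounded heat conductivity in the entropy equation --- raises $\rho,\bv,s$ to $W^2_p(\Omega)$ for every $p<\infty$, and rewriting (\ref{ap-ent}) back as (\ref{ap-ns})$_3$ with $\theta=e^{s}$ gives Theorem \ref{t2}.

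The step I expect to be the main obstacle is the temperature equation. Since a priori one controls $\theta$ only in $L_{3m}(\Omega)$ and $s$ only in $H^1(\Omega)$ (so that $e^{s}$ need not lie in any $L^p$), the fixed point cannot be set up naively on the natural spaces; the entropy reformulation $s=\ln\theta$, the non-degeneracy furnished by the $\epsilon$-term in the flux coefficient $(1+\theta^m)(\epsilon+\theta)/\theta$, the Moser $L_\infty$-bound, and the compatibility of the energy estimate of Lemma \ref{ene-bound} with the Leray--Schauder homotopy are the delicate ingredients. As stressed in the text preceding the statement, this positivity issue is the genuinely new difficulty with respect to the barotropic theory of \cite{MuPo},\cite{PoMu}.
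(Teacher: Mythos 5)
Your overall strategy is the one the paper uses: a Leray--Schauder argument in which the regularized continuity equation is solved first as a separate operator $S(\bv)=\rho$ (Lemma \ref{l2}), giving $0\le\rho\le k$ and $\int_\Omega\rho\,dx\le M$ for free; positivity of the temperature is secured by working with the entropy $s=\ln\theta$; the a priori bounds along the homotopy are obtained by rerunning the energy estimates of Lemma \ref{ene-bound}; and the $W^2_p$ regularity follows from an elliptic bootstrap. Two points of execution differ, and both concern precisely the step you single out as delicate. First, the paper's fixed-point variable is the pair $(\bw,z)$ with $z$ the \emph{entropy}, not the temperature, and the operator $\mathcal{T}(\bv,s)=(\bw,z)$ in (\ref{L-S}) is a \emph{linear} elliptic system for the output: the leading coefficient $(1+e^{ms})(\epsilon+e^{s})$ of the $z$-equation and all right-hand sides are evaluated at the input $(\bv,s)$, with $\rho=S(\bv)$. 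As a result, $t\mathcal{T}(\bw,z)=(\bw,z)$ is literally the same system with right-hand sides multiplied by $t$ (system (\ref{L-S-t})), and the estimates of Lemma \ref{ene-bound} carry over term by term with factors of $t$ (Lemma \ref{l4}). In your version --- fixed-point variable $\vartheta=e^{s}$ and a quasilinear inner solve for $s$ --- the homotopy $x=t\Phi(x)$ forces the input temperature to equal $te^{s}$, which degenerates as $t\to 0$ and, because the temperature step is genuinely nonlinear in its output, is not the solution of any scaled equation; the assertion that all solutions of $x=t\Phi(x)$ obey the estimates of Lemma \ref{ene-bound} would therefore not go through as written. This is fixable, but the fix is exactly the paper's formulation: homotopy in the entropy variable with coefficients frozen at the input. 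Second, the $L_\infty$ bound on $\theta$ is obtained in the paper not by Moser iteration but by the Kirchhoff substitution $\Phi(z)=\int_0^{z}(1+e^{m\tau})(\epsilon+e^{\tau})\,d\tau$ of (\ref{D1}), which reduces the temperature equation to $-\Delta\Phi=$ r.h.s.\ with oblique boundary data (\ref{heat}); testing with $\Phi$ gives $\Phi\in W^1_2(\Omega)$, linear elliptic regularity then gives $\Phi\in W^2_{p^*}(\Omega)$ with $p^*=\min\{\tfrac{3m}{2},2\}>\tfrac32$, hence $z,\theta\in L_\infty(\Omega)$, after which the bootstrap closes. This avoids running De Giorgi--Nash--Moser for a divergence-form operator whose coefficient $(1+\theta^m)(\epsilon+\theta)/\theta$ blows up as $\theta\to0^+$. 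Apart from these two points your plan matches the paper's proof.
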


The proof of the existence to the approximative system (\ref{ap-ns}) will follow from the standard application 
of the Leray-Schauder fixed point theorem. It will be split into several lemmas. First we consider the continuity equation. We denote for 
$p \in [1,\infty]$ 
$$
M_p=\{ \bw \in W^2_p(\Omega); \bw \cdot \bn =0 \mbox{ at } \partial \Omega\}.
$$
We have

\begin{lemma} \label{l2}
Let $q>3$. Then the operator 
$$
S:M_q \to W^2_p(\Omega) \mbox{ \ \ \ \ \ for \ } 1 \leq p < \infty 
$$
such that 
$
S(\bv)=\rho$, where $\rho$ is the solution to the following problem
\begin{equation}\label{c1}
\begin{array}{lcr}
\displaystyle \epsilon \rho -\epsilon \Delta \rho= \epsilon h K(\rho) - \div(K(\rho)\rho \bv) 
&  \mbox{in} & \Omega, \\[8pt]
\displaystyle \frac{\partial \rho}{\partial \bn}=0 & \mbox{at} & \partial \Omega
\end{array}
\end{equation}
is a well defined continuous compact operator from $M_q$ to $W^2_p(\Omega)$, $1\leq  p< \infty$.
 In particular, the solution to (\ref{c1}) is unique. Moreover
 \begin{equation} \label{e1}
 \|\rho\|_{W^l_p(\Omega)} \leq C(k,\epsilon) (\|\bv\|_{W^{l-1}_p(\Omega)}+1), \qquad l=1,2.
 \end{equation}
 \end{lemma}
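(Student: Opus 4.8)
The claim is a standard statement about the regularizing properties of the approximate continuity equation, so the plan is to first set up the solution operator via linearization, then establish the uniform elliptic estimates, and finally extract compactness and continuity from the Sobolev embeddings. I would begin by fixing $\bv \in M_q$ with $q>3$ and studying the auxiliary linear problem
\begin{equation*}
\epsilon \sigma - \epsilon \Delta \sigma = \epsilon h K(\tilde\rho) - \div(K(\tilde\rho)\tilde\rho \bv), \qquad \pder{\sigma}{\bn}=0 \mbox{ at } \partial\Omega,
\end{equation*}
for a given $\tilde\rho$. Since $q>3$, the Sobolev embedding $W^2_q(\Omega)\hookrightarrow C^1(\overline\Omega)$ gives that $\bv$ and $\nabla\bv$ are bounded, and $K$ together with $K'$ are bounded (by construction of $K$, see (\ref{def-K})), so the right-hand side lies in $L_p(\Omega)$ for every $p<\infty$; elliptic $L_p$-theory for the Neumann problem for $\epsilon(\mathrm{Id}-\Delta)$ then yields a unique $\sigma \in W^2_p(\Omega)$. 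A contraction/Schauder argument on this map $\tilde\rho \mapsto \sigma$ in, say, $C^1(\overline\Omega)$ produces a fixed point $\rho$ solving (\ref{c1}); one uses here that $\|K(\tilde\rho)\tilde\rho\|_{C^1}$ is controlled by $\|\tilde\rho\|_{C^1}$ while the gain of two derivatives from the elliptic operator, interpolated against the loss of one in the divergence, gives the needed smallness on balls of fixed radius — this is exactly the scheme carried out in \cite{MuPo}, to which one can refer for the routine details.

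\textbf{Uniqueness and the estimate (\ref{e1}).} For uniqueness, I would subtract two solutions $\rho_1,\rho_2$, test the resulting equation with $\rho_1-\rho_2$, and use that $\div(K(\rho_i)\rho_i\bv)$ is Lipschitz in $\rho_i$ with constant depending on $\|\bv\|_{C^1}$; absorbing this into $\epsilon\|\rho_1-\rho_2\|_{L_2}^2 + \epsilon\|\nabla(\rho_1-\rho_2)\|_{L_2}^2$ is not automatic (the Lipschitz constant is not small), so the cleaner route is to observe that any solution of (\ref{c1}) is a fixed point of the contraction above, hence unique on each ball, and then note the a priori bounds $0\le\rho\le k$ and $\int_\Omega\rho\,dx\le M$ (established in Lemma \ref{ene-bound} and recalled in Theorem \ref{t2}) confine all solutions to one such ball. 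For (\ref{e1}), rewrite (\ref{c1}) as $(\mathrm{Id}-\Delta)\rho = hK(\rho) - \frac1\epsilon\div(K(\rho)\rho\bv)$; since $0\le K(\rho)\rho\le k+1$ and $|\,(K(\rho)\rho)'\,|\le C(k)$, we have $\|\div(K(\rho)\rho\bv)\|_{L_p}\le C(k)\|\bv\|_{W^1_p}$ when $l=2$ and $\|K(\rho)\rho\bv\|_{L_p}\le C(k)\|\bv\|_{L_p}$ when $l=1$, and $L_p$-elliptic regularity gives $\|\rho\|_{W^l_p}\le C(k,\epsilon)(\|\bv\|_{W^{l-1}_p}+1)$.

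\textbf{Continuity and compactness of $S$.} Compactness is immediate: by (\ref{e1}) with $l=2$, $S$ maps bounded sets of $M_q$ into bounded sets of $W^2_p(\Omega)$ for any $p<\infty$, and choosing $p$ so that $W^2_p(\Omega)\hookrightarrow\hookrightarrow W^2_{p'}(\Omega)$ (or simply using $W^2_p\hookrightarrow\hookrightarrow W^{2}_{p}$ along a larger $p$) gives the compact embedding into the target space. For continuity, take $\bv_n\to\bv$ in $M_q$; then $\rho_n=S(\bv_n)$ is bounded in $W^2_p$, so a subsequence converges weakly in $W^2_p$ and strongly in $C^1(\overline\Omega)$ to some $\rho$; passing to the limit in the weak formulation of (\ref{c1}) — using $K(\rho_n)\to K(\rho)$ uniformly and $\bv_n\to\bv$ in $C^1$ — identifies $\rho$ as the unique solution $S(\bv)$, and uniqueness of the limit upgrades the convergence to the full sequence. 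The main obstacle, if any, is purely bookkeeping: keeping track that the contraction radius in the construction of the fixed point can be taken uniform over $\bv$ in a bounded set (so that $S$ is globally defined), which follows because the contraction constant depends only on $k$ and $\|\bv\|_{C^1}\le C\|\bv\|_{W^2_q}$; everything else is the standard elliptic toolkit applied to the nondegenerate operator $\epsilon(\mathrm{Id}-\Delta)$.
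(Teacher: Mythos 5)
The paper's own ``proof'' of this lemma is a one-line citation of \cite{MuPo}, Proposition 3.1 and \cite{NoStBook}, so you are supplying an argument the authors omitted; unfortunately the two load-bearing steps of your argument do not hold as written. First, the map $\tilde\rho\mapsto\sigma$ is not a contraction on any ball of fixed radius: since $\epsilon(\mathrm{Id}-\Delta)$ is inverted, the difference of two iterates satisfies (estimating, say, in $W^1_p$ via the $W^{-1}_p$ norm of the divergence term) a Lipschitz bound with constant of order $C(k)\|\bv\|_{L_\infty(\Omega)}\epsilon^{-1}$, which is in no way small; the ``gain of two derivatives interpolated against the loss of one'' buys you \emph{compactness} of the iteration map, not contractivity. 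Consequently the uniqueness argument is circular: you correctly observe that the direct energy estimate on $\rho_1-\rho_2$ cannot absorb the transport term, but then fall back on every solution being ``a fixed point of the contraction above''. Existence should instead be run through a Schauder/Leray--Schauder scheme, whose a priori bound is the $L_\infty$ estimate $0\le\rho\le k+1$ coming from the structure of $K$ (test with $\rho^-$ and with $(\rho-(k+1))^+$ and use that $K(\rho)$, hence both the flux and the source, vanish on the relevant sets), followed by the elliptic bootstrap you describe. Uniqueness then needs a genuine device: e.g.\ write the equation for $\sigma=\rho_1-\rho_2$ as $L\sigma=0$ with $L\sigma=\epsilon\sigma-\epsilon\Delta\sigma+\div(a\bv\sigma)-\epsilon h b\sigma$, where $a=\int_0^1(tK(t))'|_{t=\rho_2+s\sigma}\,ds$ and $b=\int_0^1K'(\rho_2+s\sigma)\,ds\le 0$, and combine the maximum principle for the non-divergence-form adjoint $L^*\psi=\epsilon(1-hb)\psi-\epsilon\Delta\psi-a\bv\cdot\nabla\psi$ (whose zeroth-order coefficient is $\ge\epsilon>0$) with the Fredholm alternative. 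This is essentially the content of the cited proposition.

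Second, the compactness argument fails at the embedding: for $p>p'$ the inclusion $W^2_p(\Omega)\hookrightarrow W^2_{p'}(\Omega)$ is continuous but never compact (no derivatives are lost; bounded sequences with oscillating second derivatives have no strongly convergent subsequence), and ``$W^2_p\hookrightarrow\hookrightarrow W^2_p$'' is the identity map. Compactness of $S$ is therefore not ``immediate'' from (\ref{e1}); it must be obtained on the domain side: a bounded subset of $M_q$, $q>3$, is precompact in $C^1(\overline\Omega)$ by Rellich, and one then shows that $S$ is continuous from $C^1(\overline\Omega)$ into $W^2_p(\Omega)$ \emph{in the norm topology}. Your continuity paragraph stops at weak convergence in $W^2_p$ plus strong convergence in $C^1$, which identifies the limit but does not give convergence in $W^2_p(\Omega)$; the missing step is to write the equation satisfied by $\rho_n-\rho$, observe that its right-hand side tends to $0$ in $L_p(\Omega)$ (using $\rho_n\to\rho$ in $W^1_p(\Omega)\cap C(\overline\Omega)$ and $\bv_n\to\bv$ in $C^1(\overline\Omega)$), and apply the $L_p$ elliptic estimate for the Neumann problem once more. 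With these repairs the lemma is standard; as written, both the existence/uniqueness mechanism and the compactness mechanism are gaps.
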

 
\begin{proof}
It follows from \cite{MuPo}, Proposition 3.1 (there, the two dimensional case was considered). See also \cite{NoStBook}. 
\end{proof}

Next, we define the operator 
$$
{\cal T}: M_p \times W^2_p(\Omega) \to M_p \times W^2_p(\Omega)
\mbox{ \ \ such that \ \ }
{\cal T}(\bv,s)=(\bw,z) ,
$$
where $(\bw,z)$ is the solution to the following system
\begin{equation}\label{L-S}
\begin{array}{c}
\left.
\begin{array}{r}
\displaystyle
-\div {\bf S}(\bw)=-\frac 12 \div(K(\rho)\rho \bv \otimes \bv)
-\frac 12 K(\rho)\rho \bv \cdot \nabla \bv - \nabla P(\rho,e^s) + K(\rho)\rho \bF 
 \\[16pt]
\displaystyle
-\div\left((1+\mbox{e}^{ms})(\epsilon+\mbox{e}^s)\nabla z\right)=
{\bf S}(\bv):\nabla \bv   
-\div \Big(\bv \int_0^\rho K(t)dt\Big)\mbox{e}^s\\[4pt]
\displaystyle -\div\big(K(\rho)\rho \bv\big)\mbox{e}^s-
\mbox{e}^s K(\rho)\rho \bv \cdot \nabla s+ \mbox{e}^s K(\rho) \bv \cdot \nabla \rho  
\end{array}
\right\} \mbox{ in } \Omega,
\\[40pt]
\left.
\begin{array}{r}
 \displaystyle
\bw\cdot \bn=0,\;\;\;
 \bn \cdot {\bf S}(\bw)\cdot \btau_l+f\bw \cdot \btau_l=0 \mbox{ \ \ for } l=1,2  \\[8pt]
\displaystyle
(1+\mbox{e}^{ms})(\epsilon+\mbox{e}^s) \nabla z+\epsilon z=-L(\mbox{e}^s)(\mbox{e}^s-\theta_0) 
\end{array}\right\} \mbox{ at } \partial \Omega,
\end{array}
\end{equation}
where $\rho={\cal S}(\bw)$ is given by Lemma \ref{l2}.

Our aim is to apply the Leray--Schauder fixed point theorem. Thus we need to verify that ${\cal T}$  
is a continuous and compact mapping from $M_p \times W^2_p(\Omega)$ to $M_p \times W^2_p(\Omega)$
and that all solutions satisfying 
\begin{equation}\label{c3a}
t {\cal T}(\bw,z)=(\bw,z), \qquad t \in [0,1]  \mbox{ \ \ \ are bounded in $M_p \times W^2_p(\Omega)$.}
\end{equation}

First we easily have

\begin{lemma} \label{l3}
Let $p>3$ and all assumptions of Theorem \ref{t2} be satisfied. Then  ${\cal T}$ is a continuous and compact operator from $M_p \times W^2_p(\Omega)$ to $M_p \times W^2_p(\Omega)$.
\end{lemma}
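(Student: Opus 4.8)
\textbf{Proof plan for Lemma \ref{l3}.}

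The plan is to treat the two equations in \eqref{L-S} separately, using the elliptic theory for each, and then to combine the resulting estimates with the compactness coming from Lemma \ref{l2} and from the compact Sobolev embeddings. First I would fix $p>3$, which by Sobolev embedding gives $W^2_p(\Omega)\hookrightarrow\hookrightarrow C^1(\overline\Omega)$; this is the key gain in regularity that makes all the nonlinear terms on the right-hand sides of \eqref{L-S} harmless. Indeed, given $(\bv,s)\in M_p\times W^2_p(\Omega)$, Lemma \ref{l2} first produces $\rho=\mathcal S(\bw)$ — but here one must be a little careful: the composition is with $\bw$, the \emph{output} of the momentum equation, not with $\bv$, so I would set up the argument by solving the two lines of \eqref{L-S} in the correct order. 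Precisely: the momentum equation for $\bw$ depends only on $(\bv,s)$ and on $\rho$; but $\rho$ depends on $\bw$. One resolves this by noting that Lemma \ref{l2} already builds the map $\bw\mapsto\rho$ with estimate \eqref{e1}, so the momentum equation is itself a (linear elliptic, with the Lam\'e operator $-\div\bS(\cdot)$ and Navier-type boundary conditions) problem whose right-hand side depends on $\bw$ only through the lower-order term $\rho={\cal S}(\bw)$; standard $L^p$ theory for the Lam\'e system with the boundary conditions \eqref{bc}$_2$ (see \cite{NoStBook}) — together with the a priori bound on $\bv$ in $C^1$ and the bound \eqref{e1} on $\rho$ — yields $\bw\in W^2_p(\Omega)$ with $\|\bw\|_{W^2_p}\le C(k,\epsilon)$ depending only on $\|\bv\|_{W^2_p}$ and $\|s\|_{W^2_p}$. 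Since the right-hand side in fact lies in $W^{1}_p(\Omega)$ (because $\rho\in W^2_p$, $\bv\in W^2_p\hookrightarrow C^1$), one even gets $\bw\in W^{3}_p$-type regularity, hence a \emph{gain} of derivatives and thus compactness of the $(\bv,s)\mapsto\bw$ part.

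Next I would handle the temperature line of \eqref{L-S}, the equation for $z$. Here the elliptic operator $-\div\big((1+e^{ms})(\epsilon+e^s)\nabla z\big)$ has coefficients that are only as smooth as $s$, i.e. $C^1(\overline\Omega)$, and are uniformly elliptic because $\epsilon+e^s\ge\epsilon>0$ and $(1+e^{ms})\ge 1$ (using that $s\in C(\overline\Omega)$ is bounded). The right-hand side of the $z$-equation is a sum of terms each of which, after using $\rho\in W^2_p$, $\bv,s\in W^2_p\hookrightarrow C^1$, lies in $L^p(\Omega)$ — in fact in $W^{-1}_p$ or better; the worst term $\bS(\bv):\nabla\bv$ is in $L^p$ since $\bv\in C^1$. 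The boundary condition \eqref{L-S}$_4$ for $z$ is a Robin-type condition with smooth ($C^1$) coefficients and right-hand side $-L(e^s)(e^s-\theta_0)\in C^1(\partial\Omega)$. Classical $L^p$-elliptic theory (Agmon--Douglis--Nirenberg) for such a Robin problem then gives $z\in W^2_p(\Omega)$ with $\|z\|_{W^2_p}\le C(k,\epsilon)(1+\text{data})$, and again, since the data are actually more regular, one gets a slightly better space, hence compactness. Continuity of $\mathcal T$ then follows from continuous dependence: if $(\bv_n,s_n)\to(\bv,s)$ in $M_p\times W^2_p$, then by the embedding into $C^1$ all the nonlinear right-hand sides and all the coefficients converge in the appropriate norms, Lemma \ref{l2} gives $\rho_n=\mathcal S(\bw_n)\to\rho$ once $\bw_n\to\bw$, and the elliptic estimates are stable under such convergence, so $(\bw_n,z_n)\to(\bw,z)$; compactness follows from boundedness of $\mathcal T$ into a space compactly embedded in $M_p\times W^2_p$.

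The main obstacle I expect is not any single estimate but the bookkeeping of the coupling order: the fact that $\rho=\mathcal S(\bw)$ (not $\mathcal S(\bv)$) means that the ``momentum'' equation for $\bw$ is really a fixed-point problem in $\bw$ in its own right, and one has to invoke Lemma \ref{l2}'s estimate \eqref{e1} together with the linear theory for the Lam\'e operator to see that this sub-problem is uniquely and stably solvable — the nonlinearity $K(\rho)\rho\,\bv\otimes\bv$ etc.\ is mild because $\bv$ is a \emph{fixed} $C^1$ function in this step, so only the $\rho$-dependence needs care, and that is Lipschitz in $\bw$ by \eqref{e1}. A second, more technical point is the degenerate-looking diffusion coefficient $(1+e^{ms})(\epsilon+e^s)$ in the $z$-equation: uniform ellipticity and the boundedness of this coefficient rely essentially on $s$ being bounded (which it is, as an element of $W^2_p\hookrightarrow C(\overline\Omega)$) and on $\epsilon>0$ being fixed; this is exactly why the constants in \eqref{b-m} are allowed to depend on $\epsilon$ at this stage, and why the positivity/boundedness of $\theta=e^s$ in the genuine a priori estimates (rather than merely $e^s>0$) has to be postponed to the later argument via the entropy equation. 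Once these two points are dispatched, the rest is a routine application of $L^p$ elliptic regularity plus Rellich--Kondrachov.
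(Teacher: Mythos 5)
Your overall strategy --- exploit $p>3$ so that $W^2_p(\Omega)\hookrightarrow C^1(\overline\Omega)$ and $W^1_p(\Omega)$ is an algebra, put the right-hand sides of (\ref{L-S}) into $L_p(\Omega)$, and invoke $L_p$ elliptic theory for the Lam\'e system with Navier-type boundary conditions and for the Robin problem for $z$ --- is exactly the paper's argument. Two points, however, need repair. The first concerns compactness: your mechanism (the data are ``actually more regular,'' hence ${\cal T}$ maps bounded sets into a $W^3_p$-type space compactly embedded in $W^2_p$) fails as stated. The right-hand side of (\ref{L-S})$_1$ contains $K(\rho)\rho\bF$ with $\bF$ only in $L_\infty(\Omega)$, so it is not in $W^1_p(\Omega)$; and the coefficients $(1+e^{ms})(\epsilon+e^s)$ of the $z$-equation are only of class $C^{1+\alpha}(\overline\Omega)$, which does not support third-order estimates for $z$ either. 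The output of ${\cal T}$ is genuinely only $W^2_p$. The compactness the paper uses lives on the \emph{input} side: the right-hand sides of (\ref{L-S}) involve $(\bv,s)$ only through derivatives of order at most one, so they map $C^1(\overline\Omega)$-data continuously into $L_p(\Omega)$; composing the compact embedding $W^2_p(\Omega)\hookrightarrow\hookrightarrow C^1(\overline\Omega)$ with this map and then with the bounded solution operator $L_p(\Omega)\to W^2_p(\Omega)$ gives compactness of ${\cal T}$. This is what the paper means by ``the r.h.s.\ is at most of the first order of the sought functions.''

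The second point is the coupling $\rho={\cal S}(\bw)$ versus $\rho={\cal S}(\bv)$. Your reading of the text is understandable, but the paper's proof does not treat (\ref{L-S}) as a sub-fixed-point problem in $\bw$: it solves (\ref{L-S}) directly by linear elliptic theory, which only makes sense if $\rho$ is computed from the input $\bv$ (this is also how the construction runs in \cite{MuPo}, and the two readings coincide at the Leray--Schauder fixed point, which is all that is ultimately used). If you insist on the literal reading, your proposed resolution --- that $\bw\mapsto{\cal S}(\bw)$ is ``Lipschitz by (\ref{e1})'' --- does not close the argument: (\ref{e1}) is an a priori bound, not a contraction estimate, and nothing guarantees the smallness of the Lipschitz constant needed to solve the sub-problem by iteration. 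Taking $\rho={\cal S}(\bv)$ removes the difficulty and restores the purely linear character of (\ref{L-S}); with that and the corrected compactness argument, your proof matches the paper's.
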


\begin{proof}
Note that for $\epsilon >0$ the system (\ref{L-S}) is strictly elliptic. Since $p>3$,  the $W^1_p(\Omega)$--space is algebra, thus the r.h.s. of (\ref{L-S}) belongs to the $L_p$--space 
(the boundary term belongs to $W^{1-1/p}_{p}(\partial \Omega)$).
The coefficients in the operator in the l.h.s. of $(\ref{L-S})_2$ 
are of the $C^{1+\alpha}(\overline\Omega)$--class. Hence the standard theory for elliptic
systems gives us the existence of the solution to (\ref{L-S}) in $M_p\times W^2_p(\Omega)$ with 
the following bound
%
$$
\begin{array}{c}
||\bw||_{W^2_p(\Omega)}+||z||_{W^2_p(\Omega)}\leq C(||e^s||_{C^{1+\alpha}(\overline\Omega)})\Big(||\mbox{the r.h.s. of }(\ref{L-S})_1||_{L_p(\Omega)}\\[8pt]
+||\mbox{the r.h.s. of }(\ref{L-S})_2||_{L_p(\Omega)}
+||\mbox{the r.h.s. of }(\ref{L-S})_4||_{W^{1-1/p}_p(\partial \Omega)}\Big)
\end{array}
$$
%
which
guarantees us the uniqueness and the continuous dependence on the data.
Moreover the r.h.s. of (\ref{L-S}) is at most of the first order of sought functions. Thus this structure implies the compactness for the 
map ${\cal T}$.
\end{proof}

Next we consider a priori bounds for solutions to (\ref{c3a}).

\begin{lemma} \label{l4} 
All solutions to problem (\ref{c3a}) in the class $M_p \times W^2_p(\Omega)$  satisfy the following bounds
\begin{equation}\label{L1}
0\leq \rho \leq k, \qquad 
||\bw||_{H^1(\Omega)}+||\theta||_{L_{3m}(\Omega)}+ ||\nabla \theta||_{L_r(\Omega)}+\sqrt{\varepsilon}\|\nabla \rho\|_{L_2(\Omega)}\leq C(k),
\end{equation}
where $r= \min\{\frac{3m}{m+1},2\}$, $\theta=e^z$ and the constant C(k) is independent of $\epsilon$ and $t \in [0,1]$.
\end{lemma}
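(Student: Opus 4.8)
\textbf{Proof plan for Lemma \ref{l4}.}
The strategy is to mimic the a~priori estimates already carried out in Lemma~\ref{ene-bound}, but now tracking the fixed-point parameter $t\in[0,1]$ and keeping $k$ fixed, so that the bounds are allowed to depend on $k$ while remaining uniform in $\epsilon$ and $t$. A solution to \eqref{c3a} is a solution of the same system \eqref{ap-ns}--\eqref{bc} with the right-hand sides multiplied by $t$ (equivalently $\bF$ replaced by $t\bF$, and the source terms and boundary data scaled accordingly); since $|t|\le 1$, every inequality obtained in Lemma~\ref{ene-bound} survives with the same constants. First I would record the density bounds $0\le\rho\le k$ and $\int_\Omega\rho\,dx\le M$: these follow exactly as in the proof of Lemma~\ref{ene-bound}, from the maximum principle applied to the regularized continuity equation \eqref{c1} (Lemma~\ref{l2}) together with its integration over $\Omega$, and are in fact independent of~$t$ because the scaling only touches the momentum and energy equations.

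Next I would reproduce the energy/entropy estimate. Multiply the (scaled) momentum equation by $\bw$, integrate, and use the continuity equation to rewrite $\int_\Omega\bw\cdot\nabla P_b(\rho)\,dx$ as in \eqref{b10a}; multiply the (scaled) entropy equation \eqref{ap-ent} by $1$ and by $e^{-s}$ and integrate, obtaining the analogues of \eqref{b1}, \eqref{b2}, \eqref{b3} and \eqref{2.14}. All the $\epsilon$-terms coming from the continuity equation ($\epsilon|\nabla\rho|^2/\rho$, $\epsilon\rho\ln\rho$, $\epsilon hK(\rho)\ln\rho$, the cross term $\epsilon\nabla\rho\cdot\nabla s$) are handled precisely as in \eqref{2.17}--\eqref{2.21}; the presence of~$t$ in front of $\bF$ and of the source terms only improves the signs. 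This yields the key inequality \eqref{b4} with $H=C(1+\int_\Omega|K(\rho)\rho\,\bw\cdot\bF|\,dx)$, hence the ``homogeneous'' bounds on $\theta$ in $L_{3m}(\Omega)$, on $\nabla\theta$ in $L_r(\Omega)$ with $r=\min\{2,\tfrac{3m}{m+1}\}$, and on $\nabla s$ in $L_2(\Omega)$, all via the Poincaré-type inequality \eqref{b7}. From \eqref{b10a} and Korn's inequality one gets \eqref{b10} for $\|\bw\|_{H^1(\Omega)}$, and then the Bogovskii/Stokes test-function argument \eqref{b11}--\eqref{b27} gives control of $\|P_b(\rho)\|_{L_2(\Omega)}$, of $\|K(\rho)\rho\|_{L_{2\gamma}(\Omega)}$ and of $\|\theta\int_0^\rho K(t)\,dt\|_{L_2(\Omega)}$ in terms of powers of $\|\bw\|_{H^1(\Omega)}$; combining as at the end of the previous proof and invoking $\gamma>3$, $m>\frac{3\gamma-1}{3\gamma-7}$ closes the loop and bounds $\|\bw\|_{H^1(\Omega)}$ by a constant. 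A subtle point worth a remark: here one does \emph{not} need the bounds to be independent of~$k$, so even the places where the previous proof was delicate become easier.

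The one genuinely new difficulty, compared with Lemma~\ref{ene-bound}, is that a priori we only know $s=z\in W^2_p(\Omega)$, not that $\theta=e^s>0$ is bounded away from $0$ and $\infty$ pointwise; the manipulations above (multiplying by $e^{-s}$, integrating $\theta\div\bw$, etc.) are all legitimate for such $s$, but to even speak of $\theta^{3m}$ or $\nabla\theta^{m/2}$ one must first check these quantities are finite, which follows because $s\in W^2_p\hookrightarrow C(\overline\Omega)$ for $p>3$, so $\theta$ is a priori a bounded positive continuous function (with bounds depending on the particular solution, which is exactly why we need the \emph{uniform} estimate \eqref{L1}). Thus I would state at the outset that the estimates are derived for a fixed solution, for which all integrals below are finite, and that the content of the lemma is that the resulting constant can be taken to depend only on~$k$. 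I expect the only real bookkeeping obstacle to be verifying that the scaling by~$t$ does not spoil any sign or any absorption: concretely, that in \eqref{b2}--\eqref{b4} the terms $t\int_{\partial\Omega}L(\theta)\theta_0/\theta\,d\sigma$, $t\int_\Omega \mathbf S(\bv):\nabla\bv/\theta\,dx$, etc.\ keep the correct sign and that $t\le 1$ lets one bound $tH$ by $H$; this is routine but must be said. Once \eqref{b4} and \eqref{b29} (with constants now depending on~$k$) are in hand, \eqref{L1} follows immediately, finishing the proof.
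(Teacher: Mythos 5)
Your overall strategy --- rerun the estimates of Lemma \ref{ene-bound} while tracking the homotopy parameter $t$ and allowing the constants to depend on $k$ --- is exactly the paper's, and your treatment of the density bounds, of the combined energy/entropy inequality, of the signs of the $t$-weighted terms, and of the gradient bounds for $\theta$ matches the paper's proof. The way you propose to close the loop, however, has a genuine defect. You invoke the Bogovskii/Stokes test-function argument (\ref{b11})--(\ref{b27}) to control $\|P_b(\rho)\|_{L_2(\Omega)}$ and $\|K(\rho)\rho\|_{L_{2\gamma}(\Omega)}$ by powers of $\|\bw\|_{H^1(\Omega)}$ and then close via $m>\frac{3\gamma-1}{3\gamma-7}$. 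But in the fixed-point system (\ref{L-S-t}) the pressure gradient carries the factor $t$ while the viscous term $-\div\bS(\bw)$ does not; testing with $\bPhi$ from (\ref{b11}) therefore yields $t\|P_b(\rho)\|^2_{L_2(\Omega)} \le C\|\bw\|_{H^1(\Omega)}\|P_b(\rho)\|_{L_2(\Omega)} + t(\dots)$, and the bound on $\|P_b(\rho)\|_{L_2(\Omega)}$ extracted from this identity degenerates as $t\to 0^+$. Since the lemma explicitly requires uniformity in $t\in[0,1]$, this step does not go through as written. The cure --- and the paper's actual argument --- is to drop the Bogovskii step altogether: because $0\le\rho\le k$, the two right-hand-side terms it was needed for, $\int_\Omega|K(\rho)\rho\,\bw\cdot\bF|\,dx$ and $\int_\Omega|\theta\int_0^\rho K(\tau)\,d\tau|^2dx$, are bounded directly by $C(k)\|\bw\|_{L_2(\Omega)}$ and $C(k)\|\theta\|^2_{L_2(\Omega)}$. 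Combining $\|\theta\|_{L_{3m}(\Omega)}\le C(k)(1+\|\bw\|_{L_2(\Omega)})^{1/m}$ (obtained after dividing the boundary inequality by $t$, the case $t=0$ being trivial) with $\|\bw\|^2_{H^1(\Omega)}\le C(k)(1+\|\theta\|^2_{L_2(\Omega)})$ then closes the loop by Young's inequality using only $m>1$; the restriction $m>\frac{3\gamma-1}{3\gamma-7}$ plays no role at this stage.

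A smaller omission: you never derive the bound $\sqrt{\epsilon}\,\|\nabla\rho\|_{L_2(\Omega)}\le C(k)$, which is part of (\ref{L1}) and does not ``follow immediately'' from the analogues of (\ref{b4}) and (\ref{b29}); the term $\epsilon\int_\Omega\rho^{\gamma-2}|\nabla\rho|^2\,dx$ appearing in the energy estimate degenerates where $\rho$ is small. One must test the regularized continuity equation with $\rho$ itself, which gives $\epsilon\int_\Omega(|\nabla\rho|^2+\rho^2)\,dx\le\epsilon\int_\Omega hK(\rho)\rho\,dx+\int_\Omega\bigl(\int_0^\rho K(\tau)\tau\,d\tau\bigr)|\div\bw|\,dx\le C(k)$ once $\|\bw\|_{H^1(\Omega)}\le C(k)$ is known. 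Your remarks on the a priori finiteness of the integrals (via $z\in W^2_p(\Omega)\hookrightarrow C(\overline\Omega)$ for $p>3$) and on the bookkeeping of the factor $t$ are correct and worth keeping.
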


\begin{proof}
We may basically repeat estimates of Lemma \ref{ene-bound} from the previous section. However, on the one hand, we are in a simpler situation as we can use bounds which depend on $k$, i.e. on the $L_\infty$ bound of the density (they may be proved analogously as in \cite{MuPo}), on the other hand, we must control the behavior of all norms with respect to $t$.

Thus, repeating steps (\ref{b8a})--(\ref{b3}) for the case $t=1$ (the corresponding terms are only multiplied by $t$) we finally get
\begin{equation*}\label{L1a}
\begin{array}{c}
\displaystyle
(1-t) \int_\Omega {\bf S}(\bw):\nabla \bw dx + \int_{\partial \Omega} f (\bw \odot \btau)^2 d\sigma + \int_\Omega
\frac{(1+\theta^m)(\epsilon +\theta)}{\theta} |\nabla z|^2 dx \\[8pt]
\displaystyle +
t \int_\Omega \Big( \frac{{\bf S}(\bw):\nabla \bw}{\theta} +
\epsilon \gamma \rho^{\gamma-2}|\nabla \rho|^2  +  \frac{\epsilon\gamma}{\gamma-1}\rho^\gamma \Big)dx \\[8pt]
\displaystyle
+\epsilon \int_{\partial \Omega} \big[ z_+(1-\mbox{e}^{-z_+})+|z_-|(\mbox{e}^{|z_-|}-1)\big]d \sigma
+t \int_{\partial \Omega} \big[L(\theta)\theta-L(\theta)\theta_0  +\frac{L(\theta)\theta_0}{\theta}- L(\theta)
\big] d\sigma
\\[8pt]
\displaystyle
\leq t \int_\Omega \Big(K(\rho)\rho \bw \cdot\nabla z -K(\rho)\bw \cdot \nabla \rho\Big)  dx  +
tC\Big(1+\int_\Omega |K(\rho)\rho \bw \cdot \bF| dx\Big),
\end{array}
\end{equation*}
where $\rho = S(\bw)$. 

We may now repeat the arguments between (\ref{2.16})--(\ref{b4}) (all the corresponding terms are only multiplied by $t$) and we finally get

\begin{equation*}\label{L1e}
\begin{array}{c}
\displaystyle
\int_\Omega \frac{1+\theta^m}{\theta^2}|\nabla \theta|^2 dx+
t\int_\Omega  \frac{{\bf S}(\bw):\nabla \bw}{\theta} dx
+\int_{\partial \Omega} \Big(tL(\theta) \theta +t\frac{L(\theta)\theta_0}{\theta}
+\epsilon |z|\Big) d \sigma
\\[8pt]
\displaystyle
\leq tC\Big(1+\int_\Omega |K(\rho)\rho \bv \bF| dx\Big).
\end{array}
\end{equation*}

As $0\leq \rho\leq k$, we easily get (the Poincar\'e inequality is just the same as in the previous section), after dividing by $t$ (the case $t=0$ is clear; recall also $m=l+1$)
\begin{equation*}\label{L1f}
||\theta||_{L_{3m}(\Omega)}\leq C(1+||\bw||_{L_2(\Omega)})^{1/m}
\end{equation*}
and from an analogue to (\ref{b10}) also
\begin{equation*}\label{L1g}
||\bw||_{H^1(\Omega)}^2 \leq C(1+||\theta||^2_{L_2(\Omega)}).
\end{equation*}
As $m>1$, it implies
$$
||\bw||_{H^1(\Omega)}+||\theta||_{L_{3m}(\Omega)} \leq C(k).
$$
Further, if $m\geq2$ then due to the control of $\frac{|\nabla\theta|}{\theta}$ and $|\nabla \theta| \theta^{\frac{m-2}{2}}$ in 
$L_2(\Omega)$ we have  also $\nabla \theta$  bounded in the same space. For $1<m<2$,
\begin{equation} \label{3.13}
\|\nabla \theta\|_{L_{\frac{3m}{m+1}}(\Omega)} \leq \||\nabla \theta| \theta^{\frac{m-2}{2}}\|_{L_2(\Omega)} \|\theta\|_{L_{3m}(\Omega)}^{\frac{2-m}{2}}.
\end{equation}
Finally, multiplying the approximative continuity equation by $\rho$ and integrating by parts we get  %
\begin{equation*}\label{L3}
\epsilon \int_\Omega (|\nabla \rho |^2+ \rho^2) dx\leq  \epsilon \int_\Omega 
hK(\rho)\rho dx + \int_\Omega \Big(\int_0^\rho K(t)t dt\Big) |\div \bw| dx,
\end{equation*}
from where we deduce the bound for $\sqrt{\epsilon} \|\nabla \rho\|_{L_2(\Omega)}$. 
\end{proof}
 
To conclude, we  verify the bound on $(\bw,z)$ in $W^2_p(\Omega) \times W^2_p(\Omega)$, $p < \infty$, independently of $t$.  
We apply the bootstrap method to system 
\begin{equation}\label{L-S-t}
\begin{array}{c}
\left.
\begin{array}{c}
\displaystyle
-\div {\bf S}(\bw)=t\Big[-\frac 12 \div(K(\rho)\rho \bw \otimes \bw)
-\frac 12 K(\rho)\rho \bw \cdot \nabla \bw  \\[4pt]
\displaystyle -\nabla P(\rho,\mbox{e}^z) + K(\rho)\rho \bF\Big] \\[10pt]
\displaystyle
-\div\big((1+\mbox{e}^{mz})(\epsilon+\mbox{e}^z)\nabla z\big)=
t\Big[{\bf S}(\bw):\nabla \bw -  
\div \Big(\bw \int_0^\rho K(t)dt\Big)\mbox{e}^z\\[4pt]
\displaystyle -\div\big(K(\rho)\rho \bw\big)\mbox{e}^z-
\mbox{e}^z K(\rho)\rho \bw \cdot \nabla z+ \mbox{e}^z K(\rho) \bw \cdot \nabla \rho \Big] 
\end{array}
\right\} \mbox{ in } \Omega,
\\[60pt]
\left.
\begin{array}{c}
 \displaystyle
\bw\cdot \bn=0,\;\;\;
 \bn \cdot {\bf S}(\bw)\cdot \btau_l+f\bw \cdot \btau_l=0 \mbox{ \ \ for } l=1,2   \\[8pt]
\displaystyle
(1+\mbox{e}^{mz})(\epsilon+\mbox{e}^z) \nabla z+\epsilon z=-tL(\mbox{e}^z)(\mbox{e}^z-\theta_0)
\end{array}
\right\} 
\mbox{ at } \partial \Omega, 
\end{array}
\end{equation}
where $\rho={\cal S}(\bw)$ given by Lemma \ref{l2}. Note first that due to bounds from Lemma \ref{l4} we have
$$
\|\bw\|_{W^1_3(\Omega)} \leq C
$$
as $K(\rho) \rho \bw\otimes \bw$ is bounded in $L_3(\Omega)$. Thus $\bw$ is bounded in any $L_q(\Omega)$, $q<\infty$ and the most restrictive term is $\nabla P(\rho,\mbox{e}^z)$. As $\mbox{e}^z = \theta$ is bounded in $L_{3m}(\Omega)$, $\rho$ in $L_\infty(\Omega)$, we deduce the bound
$$
\|\bw\|_{W^1_{3m}(\Omega)} \leq C
\mbox{ \ \ \ and consequently also   \ \ }
\|\rho\|_{W^2_{3m}(\Omega)} \leq C.
$$
Note that the constant in the estimate for $\bw$ is independent of $\epsilon$.

Next, we rewrite equation (\ref{L-S-t})$_2$ as follows
\begin{equation}\label{heat}
\begin{array}{c}
\displaystyle
-\Delta \Phi(z)=t
\Big[{\bf S}(\bw):\nabla \bw +\mbox{e}^z K(\rho)\rho\bw \cdot\nabla z -\mbox{e}^z K(\rho)\bw\cdot \nabla\rho  \\[4pt]
\displaystyle
-\div \Big(\bw\int_0^\rho K(t)dt\Big)\mbox{e}^z-\div\big(K(\rho)\rho \bw\big)\mbox{e}^z \Big]
 \mbox{ \ \ in \ }  \Omega,\\[13pt]
\displaystyle
\frac{\partial \Phi(z)}{\partial \bn}=-\epsilon z -tL(\mbox{e}^z)(\mbox{e}^z-\theta_0) 
\mbox{ \ \ at \ } \partial \Omega
\end{array}
\end{equation}
with 
\begin{equation}\label{D1}
\Phi(z)=\int_0^x (1+\mbox{e}^{m\tau})(\epsilon+\mbox{e}^\tau) d \tau.
\end{equation}
We multiply (\ref{heat})$_1$ by $\Phi$ and integrate over $\Omega$. It leads to
\begin{equation*}\label{D2}
||\nabla \Phi||_{L_2(\Omega)}^2+\int_{\partial \Omega} \big(tL(\mbox{e}^z)(\mbox{e}^z-\theta_0) \Phi + \epsilon z \Phi\big)d\sigma
\leq C||\mbox{the r.h.s. of }(\ref{heat})_1||_{L_{6/5}(\Omega)}||\Phi||_{L_6(\Omega)}.
\end{equation*}
It is not difficult to realize that the most restrictive term on the r.h.s is $\mbox{e}^z K(\rho)\rho\bw \cdot\nabla z\in L_{\frac{3m}{m+1}}(\Omega)$, where $\frac{3m}{m+1} > \frac 65$ for $m>1$.

Let us look at the boundary terms. Note that $\Phi(s) \sim \epsilon s$ for $s\to -\infty$ and $\Phi \sim \mbox{e}^{(m+1)s}$ for $s\to +\infty$. Thus 
\begin{equation*}\label{D5a}
\int_{\partial \Omega} \big[tL(\mbox{e}^s)(\mbox{e}^s-\theta_0)\Phi + \epsilon s \Phi\big]I_{\{\Phi \leq 0\}} d\sigma \geq ||\Phi||_{L_2(\partial \Omega)}-C
\end{equation*}
and
\begin{equation*}\label{D5b}
\int_{\partial \Omega} \big[tL(\mbox{e}^s)(\mbox{e}^s-\theta_0)\Phi + \epsilon s \Phi\big]I_{\{\Phi \geq 0\}} d\sigma \geq ||\Phi||_{L_1(\partial \Omega)}-C.
\end{equation*}
Thus, the estimates above yield
$\|\Phi\|_{W^1_2(\Omega)} \leq C$
with $C$ independent of $t$ which implies
$$
\|\theta^{m+1}\|_{L_6(\Omega)} = \|\mbox{e}^{(m+1)z}\|_{L_6(\Omega)} \leq C
\mbox{ \ \ and also \ \ }
\|\nabla \theta\|_{L_2(\Omega)} = \|\mbox{e}^{z}\nabla z\|_{L_2(\Omega)} \leq C.
$$ 
Now, it is not difficult to verify that from
(\ref{heat}) we get
$\|\Phi\|_{W^2_{p^*}(\Omega)} \leq C$
with $p^*=\min\{\frac{3m}{2},2\}$ (thus $\mbox{e}^z \nabla z \in L_2(\Omega)$ and $\nabla w \in L_{3m}(\Omega)$).
In particular, 
$$
\begin{array}{c}
\|z\|_{L_\infty(\Omega)} + \|\theta\|_{L_\infty(\Omega)} \leq C ,\qquad
\|\nabla z\|_{L_q(\Omega)} + \|\nabla \theta\|_{L_q(\Omega)} \leq C
\end{array}
$$
for $1\leq q\leq q^* = \frac{3p^*}{3-p^*} >3$. Thus from the approximative momentum equation we get 
($\nabla (\rho \theta) \in L_{q^*}(\Omega)$) the bound
$\|\bw \|_{W^2_{q^*}(\Omega)} \leq C$
and from the energy/entropy equation also
$$
\|z\|_{W^2_{q^*}(\Omega)} + \| \theta\|_{W^2_{q^*}(\Omega)} \leq C.
$$
The imbedding theorem yields
$\|\nabla z\|_{L_\infty(\Omega)} + \|\nabla \theta\|_{L_\infty(\Omega)} \leq C$
which finally gives as above
$$
\|\bw\|_{W^2_r(\Omega)} + \|z\|_{W^2_{r}(\Omega)} + \| \theta\|_{W^2_{r}(\Omega)} \leq C, \quad 1\leq r <\infty
$$
with $C$ independent of $t$. This finishes the proof of Theorem \ref{t2}.

\section{Effective viscous flux}

In this part we investigate the properties of the effective viscous flux.
Estimates (\ref{b-m}) from Theorem \ref{t2} guarantee us existence of a subsequence $ \epsilon \to 0^+$ such that
\begin{equation}\label{e2}
\begin{array}{c}
\bv_\epsilon \rightharpoonup \bv \mbox{ \ \ \ in } W^1_{3m}(\Omega),\\
\bv_\epsilon \to \bv \mbox{ \ \ \ in } L_\infty(\Omega),\\
\rho_\epsilon \rightharpoonup^* \rho \mbox{ \ \ \ in } L_\infty(\Omega),\\
P_b(\rho_\epsilon) \rightharpoonup^* \overline{P_b(\rho)} \mbox{ \ \ \ in } L_\infty(\Omega),\\
K(\rho_\epsilon)\rho_\epsilon \rightharpoonup^* \overline{K(\rho)\rho}\mbox{ \ \ \ in } L_\infty(\Omega),\\
K(\rho_\epsilon)\rightharpoonup^* \overline{K(\rho)}\mbox{ \ \ \ in } L_\infty(\Omega),\\
\displaystyle \int_0^{\rho_\epsilon} K(t)dt \rightharpoonup^* \overline{\int_0^\rho K(t)dt}\mbox{ \ \ \ in } L_\infty(\Omega),\\
\theta_\epsilon \rightharpoonup \theta \mbox{ \ \ \ in } W^1_r(\Omega) \mbox{ with }
r=\min\{2,\frac{3m}{m+1}\},\\
\theta_\epsilon \to \theta \mbox{ \ \ \ \ in }L_q(\Omega) \mbox{\ \ \ for } q< 3m.
\end{array}
\end{equation}
Passing to the limit in the weak formulation of our problem we get

\begin{equation}\label{4.1}
\div (\overline{K(\rho)\rho} \bv) =0,
\end{equation} 
\begin{equation}\label{4.2}
\overline{K(\rho)\rho} \bv \cdot \nabla \bv - \div\Big(2\mu \bD(\bv)+\nu (\div \bv) \bI-
\overline{P_b(\rho)}\bI-\theta \big(\overline{\int_0^\rho K(t)dt}\big)\bI \Big)=\overline{K(\rho)\rho}\bF,
\end{equation}
\begin{equation} \label{4.3}
-\div((1+\theta^m)\nabla \theta) + \theta \big(\overline{\div \bv \int_0^\rho K(t) dt }\big) + \div (\overline{K(\rho)\rho }\theta \bv) = 
2\mu\overline{|D(\bv)|^2} + \nu \overline{(\div \bv)^2}
\end{equation}
together with the boundary conditions (\ref{1.8})--(\ref{1.10}). Recall that (\ref{4.1})--(\ref{4.3}) is satisfied in the weak sense,
similar to Definition \ref{d1}.

In what follows we must carefully study the dependence of the a priori bounds on $k$. We have

\begin{lemma} \label{l 4.1} 
Under the assumptions of Theorems \ref{t1} and \ref{t2}, we have 
\begin{equation}\label{ee1}
||\rho_\epsilon||_{L_\infty(\Omega)} \leq k \mbox{ \ \ and \ \  }
||\bv_\epsilon||_{W^1_{3m}(\Omega)}\leq C(1+k^{\frac \gamma 3 \frac{3m-2}{m}}).
\end{equation}
\end{lemma}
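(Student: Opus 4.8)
The bound $\|\rho_\epsilon\|_{L_\infty(\Omega)}\le k$ is already contained in Theorem \ref{t2} (indeed $0\le\rho_\epsilon\le k$ pointwise by the structure of the approximative continuity equation), so nothing new is needed there. The content of the lemma is the second inequality: I must track how the constant $C(k)$ appearing in \eqref{b-m} depends on $k$. The strategy is to revisit the a~priori estimate from Lemma \ref{ene-bound}, but now \emph{not} discarding the $k$-dependent quantities --- in particular $\int_0^{\rho_\epsilon}K(t)\,dt$ is bounded pointwise by $\rho_\epsilon\le k$, so wherever the energy proof used only the $\epsilon,k$-independent norm $\|K(\rho)\rho\|_{L_{2\gamma}}$, I may instead use the crude bound $\int_0^{\rho}K(t)\,dt\le k$, trading a cleaner power for an explicit power of $k$. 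The key is to carry this through steps \eqref{b10}--\eqref{b29} with $k$ visible.

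First I would record the basic quantities: $\|\bv_\epsilon\|_{H^1(\Omega)}$, $\|\theta_\epsilon\|_{L_{3m}(\Omega)}$, $\|\nabla\theta_\epsilon\|_{L_r(\Omega)}$ are all bounded by the $\epsilon,k$-independent constant of Lemma \ref{ene-bound}. The improvement of $\bv_\epsilon$ from $H^1$ to $W^1_{3m}$ comes, as in the proof of Theorem \ref{t2}, from elliptic regularity applied to the momentum equation $(\ref{ap-ns})_2$: the worst right-hand side term is $\nabla P(\rho_\epsilon,\theta_\epsilon)=\nabla P_b(\rho_\epsilon)+\theta_\epsilon\nabla\!\int_0^{\rho_\epsilon}K(t)\,dt+(\int_0^{\rho_\epsilon}K(t)\,dt)\nabla\theta_\epsilon$ (after integrating by parts, effectively $P(\rho_\epsilon,\theta_\epsilon)$ itself in $L_{3m}$). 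Since $P_b(\rho_\epsilon)\le C(1+k^\gamma)$ pointwise and $\int_0^{\rho_\epsilon}K(t)\,dt\le k$ while $\theta_\epsilon\in L_{3m}(\Omega)$ with $\epsilon,k$-independent norm, one gets $\|P(\rho_\epsilon,\theta_\epsilon)\|_{L_{3m}(\Omega)}\le C(1+k^\gamma+k\|\theta_\epsilon\|_{L_{3m}})\le C(1+k^\gamma)$. The convective term $K(\rho_\epsilon)\rho_\epsilon\bv_\epsilon\otimes\bv_\epsilon$ is bounded in $L_{3m}$ by $C k\|\bv_\epsilon\|_{H^1}^2$ using the $W^1_{3m}\hookrightarrow L_\infty$ embedding; likewise $K(\rho_\epsilon)\rho_\epsilon\bF$ is $O(k)$. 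Feeding this into the elliptic estimate $\|\bv_\epsilon\|_{W^1_{3m}}\le C(\|\text{r.h.s.}\|_{L_{3m}}+\cdots)$ and absorbing the quadratic velocity term (it enters with a factor $k$ but the leading-order linear estimate dominates once we use the already-established $H^1$ bound) yields $\|\bv_\epsilon\|_{W^1_{3m}(\Omega)}\le C(1+k^{\gamma/3})$ from the pressure, but the precise exponent $\frac{\gamma}{3}\frac{3m-2}{m}$ must come from a more careful interpolation --- this is where I expect to spend effort.

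The exponent $\frac{\gamma}{3}\frac{3m-2}{m}$ suggests the bound is obtained \emph{not} from the crude pointwise estimate $P_b(\rho_\epsilon)\le k^\gamma$ but from interpolating the $L_2$ bound on $P_b(\rho_\epsilon)$ (which is $\epsilon,k$-independent by \eqref{b17}, \eqref{b24}) with the pointwise bound $k^\gamma$: writing $\|P_b(\rho_\epsilon)\|_{L_{3m}}\le\|P_b(\rho_\epsilon)\|_{L_2}^{\,2/(3m)}\|P_b(\rho_\epsilon)\|_{L_\infty}^{1-2/(3m)}\le C(1+k^\gamma)^{(3m-2)/(3m)}=C(1+k^{\gamma(3m-2)/(3m)})$, and this times the regularity estimate (which costs a further factor related to $\|\theta_\epsilon\nabla\!\int_0^{\rho_\epsilon}K\|$ controlled similarly) produces exactly $k^{\frac{\gamma}{3}\frac{3m-2}{m}}$ after the $W^1_{3m}$ elliptic bound for $\bv$ converts an $L_{3m}$ pressure bound into a $W^1_{3m}$ velocity bound --- here note $\tfrac{\gamma}{3}\cdot\tfrac{3m-2}{m}=\gamma\cdot\tfrac{3m-2}{3m}$, so the claimed exponent is precisely $\gamma\frac{3m-2}{3m}$, matching the interpolation above. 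So the real plan is: (i) quote $\|P_b(\rho_\epsilon)\|_{L_2}\le C$ independent of $\epsilon,k$ from the energy estimate; (ii) interpolate against $\|P_b(\rho_\epsilon)\|_{L_\infty}\le Ck^\gamma$ to get $\|P(\rho_\epsilon,\theta_\epsilon)\|_{L_{3m}}\le C(1+k^{\gamma(3m-2)/(3m)})$ (the $\theta$-part being lower order in $k$ since $\int_0^{\rho_\epsilon}K(t)\,dt\le k^{\gamma(3m-2)/(3m)}$ can be arranged similarly, or is simply dominated); (iii) apply elliptic regularity to $(\ref{ap-ns})_2$ with all convective/force terms absorbed using the $\epsilon,k$-independent $H^1$ bound, obtaining the stated $W^1_{3m}$ bound. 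The main obstacle is bookkeeping: verifying that \emph{every} other term (convective term $Ck\|\bv\|^2_{H^1}$, the force term $Ck$, the $\theta\int_0^\rho K$ term) contributes a power of $k$ no larger than $\gamma\frac{3m-2}{3m}$, so that the pressure term is genuinely dominant; this requires the condition $\gamma>3$, $m>\frac{3\gamma-1}{3\gamma-7}$ to guarantee $\gamma\frac{3m-2}{3m}>1$ and to beat the exponent $1$ coming from the linear force term, and $\frac{2\gamma}{3}\frac{3m-2}{3m}>1$ for the quadratic convective term, both of which follow from \eqref{b30}.
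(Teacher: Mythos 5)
Your identification of the source of the exponent is exactly the paper's: the first bound is just Theorem \ref{t2}, and the second comes from elliptic regularity for $(\ref{ap-ns})_2$ together with the interpolation $\|P_b(\rho_\epsilon)\|_{L_{3m}}\le \|P_b(\rho_\epsilon)\|_{L_2}^{2/(3m)}\|P_b(\rho_\epsilon)\|_{L_\infty}^{(3m-2)/(3m)}\le Ck^{\gamma(3m-2)/(3m)}$, using the $\epsilon,k$-independent $L_2$ bound on the pressure from Lemma \ref{ene-bound}; the term $\theta_\epsilon\int_0^{\rho_\epsilon}K(t)\,dt$ is indeed $O(k)$ in $L_{3m}$. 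So the skeleton is right and the exponent $\frac{\gamma}{3}\frac{3m-2}{m}=\gamma\frac{3m-2}{3m}$ is correctly traced to the pressure.

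The genuine gap is your treatment of the convective terms. The claim that $K(\rho_\epsilon)\rho_\epsilon\bv_\epsilon\otimes\bv_\epsilon$ is ``bounded in $L_{3m}$ by $Ck\|\bv_\epsilon\|_{H^1}^2$'' is false for $m>1$ (and the hypotheses force $m>1$): $H^1\hookrightarrow L_6$ gives $|\bv_\epsilon|^2\in L_3$ only, so the $\epsilon,k$-independent energy bound alone cannot place the convective term in $L_{3m}$, and invoking $W^1_{3m}\hookrightarrow L_\infty$ reintroduces the very norm you are estimating. The paper closes this loop by interpolating $\|\bv_\epsilon\|_{L_q}$ between $L_6$ and $W^1_{3m}$ and $\|\nabla\bv_\epsilon\|_{L_r}$ between $L_2$ and $L_{3m}$, arriving at
$$
C.T.\le Ck\,\|\bv_\epsilon\|_{W^1_2}^{\frac{2(2m-1)}{3m-2}}\,\|\bv_\epsilon\|_{W^1_{3m}}^{\frac{2(m-1)}{3m-2}},
$$
where the crucial point is that the exponent $\frac{2(m-1)}{3m-2}$ is strictly less than $1$, so Young's inequality absorbs the $W^1_{3m}$ factor into the left-hand side at the cost of $Ck^{(3m-2)/m}$ --- not $Ck$ as your bookkeeping assumes. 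The lemma then closes because $\gamma(3m-2)/(3m)=(\gamma/3)\cdot(3m-2)/m\ge(3m-2)/m$ precisely when $\gamma\ge3$; your stated conditions ($\gamma\frac{3m-2}{3m}>1$ and $\frac{2\gamma}{3}\frac{3m-2}{3m}>1$) do not correspond to the comparison actually required. The conclusion survives, but only after the interpolation-plus-absorption step you omitted, and with the correct power of $k$ attributed to the convective term.
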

\smallskip

\begin{proof} The bound on the density follows directly from Theorem \ref{t2}.  We therefore estimate the velocity. If we write
(\ref{ap-ns})$_2$ in the form
\begin{equation*}\label{ee2}
\begin{array}{c}
\displaystyle -\div \bS(\bv)=-\nabla \Big(P_b(\rho_\epsilon) 
+\theta_\epsilon \big(\int_0^{\rho_\epsilon} K(t)dt \big)\Big) +K(\rho_\epsilon)\rho_\epsilon
\bF\\ 
\displaystyle -\frac{1}{2} \div [K(\rho_\epsilon)\rho_\epsilon \bv_\epsilon \otimes \bv_\epsilon] 
-\frac{1}{2} K(\rho_\epsilon)\rho_\epsilon \bv_\epsilon \cdot \nabla \bv_\epsilon,
\end{array}
\end{equation*}
we immediately see that
$$
\begin{array}{c}
\displaystyle \|\bv_\epsilon\|_{W^1_{3m}(\Omega)}\leq C \big(\|K(\rho_\epsilon)\rho_\epsilon \bv_\epsilon \otimes \bv_\epsilon\|_{L_{3m}(\Omega)} + 
\|K(\rho_\epsilon)\rho_\epsilon \bv_\epsilon \cdot \nabla \bv_\epsilon\|_{L_{\frac{3m}{m+1}}(\Omega)}  \\
\displaystyle +\|P_b(\rho_\epsilon)\|_{L_{3m}(\Omega)} + 
\|\theta_\epsilon \big(\int_0^{\rho_\epsilon} K(t)dt\big)\|_{L_{3m}(\Omega)} + \|K(\rho_\epsilon)\rho_\epsilon
\bF \|_{L_{\frac{3m}{m+1}}(\Omega)}\big).
\end{array}
$$
Note that due to the bound of the temperature we cannot expect $\epsilon$--independent estimate for $q>3m$. The bounds on the density
and temperature yield
$$
\|P_b(\rho_\epsilon)\|_{L_{3m}(\Omega)}  \leq \|P_b(\rho_\epsilon)\|^{\frac{2}{3m}}_{L_{2}(\Omega)} \|P_b(\rho_\epsilon)\|^
{\frac{3m-2}{3m}}_{L_{\infty}(\Omega)}\leq C k^{\gamma \frac{3m-2}{3m}},
$$
while
$$
\|\theta_\epsilon \big(\int_0^{\rho_\epsilon} K(t)dt\big)\|_{L_{3m}(\Omega)} \leq C k.
$$
Note that for $m$ and $\gamma$ satisfying assumptions of Theorem \ref{t1}, $\gamma \frac{3m-2}{3m} >1$. It remains to estimate the
convective terms ($C.T.$)
$$
\begin{array}{c}
\displaystyle
C.T. \leq \|K(\rho_\epsilon)\rho_\epsilon |\bv_\epsilon|^2 \|_{L_{3m}(\Omega)} + 
\|K(\rho_\epsilon)\rho_\epsilon |\bv_\epsilon| |\nabla \bv_\epsilon|\|_{L_{\frac{3m}{m+1}}(\Omega)}\\[8pt]
\displaystyle  \leq C \|\rho_\epsilon\|_{L_\infty(\Omega)}
\big(\|\bv_\epsilon\|_{L_{6m}(\Omega)}^2 + \|\nabla \bv_\epsilon\|_{L_{\frac{3m}{m+1}}(\Omega)}\|\bv_\epsilon\|_{L_\infty(\Omega)}\big)
\end{array}
$$
for $m\geq 2$, while for $m<2$ the last term is replaced by $\|\nabla \bv_\epsilon\|_{L_2(\Omega)}\|\bv_\epsilon\|_{L_{\frac{6m}{2-m}}(\Omega)}$. Using the
fact that for $6<q\leq \infty$
$$
\|\bv_\epsilon\|_{L_q(\Omega)} \leq C\|\bv_\epsilon\|_{L_6(\Omega)}^\alpha \|\bv_\epsilon\|_{W^1_{3m}(\Omega)}^{1-\alpha}
\mbox{ \ \ with \ \ } \qquad \frac{1}{q}= \frac{\alpha}{6} +
(1-\alpha)\big(\frac{1}{3m}-\frac 13\big)
$$
and for $2<r<3m$
$$
\|\nabla \bv_\epsilon\|_{L_r(\Omega)} \leq \|\bv_\epsilon\|_{L_2(\Omega)}^\alpha \|\nabla\bv_\epsilon\|_{L_{3m}(\Omega)}^{1-\alpha}
\mbox{ \ \ with \ \ } \qquad \frac{1}{r}= 
\frac{\alpha}{2} +\frac{1-\alpha}{3m},
$$   
we end up with
$$
C.T. \leq C \|\rho_\epsilon\|_{L_\infty(\Omega)}
\|\bv_\epsilon\|_{W^1_{2}(\Omega)}^{2\frac{2m-1}{3m-2}} \|\bv_\epsilon\|_{W^1_{3m}(\Omega)}^{2\frac{m-1}{3m-2}}.
$$
Note that $\frac{2(m-1)}{3m-2}<1$. Thus we may use the bound on $\rho_\epsilon$ and Young's inequality yields
$$
\|\bv_\epsilon\|_{W^1_{3m}(\Omega)} \leq C(1+k^{\frac{\gamma}{3} \frac{3m-2}{m}}) + C k^{\frac{3m-2}{m}} + 
\frac 12 \|\bv_\epsilon\|_{W^1_{3m}(\Omega)}.
$$
As $\gamma>3$, the lemma is proved.
\end{proof}

Before using the above proved bounds, we show one useful result which in particular implies that the limit temperature is positive.

\begin{lemma} \label{l 4.2} There exists a subsequence  $\{s_\epsilon\}$ such that
$$
s_\epsilon \to s \mbox{ in }L_2(\Omega),
$$
subsequently,
$$
\theta_\epsilon \to \theta \mbox{ in } L_q(\Omega), \quad q<3m  \mbox{ \ \ with \ \ } \theta >0 \quad \mbox{ a.e. in } \Omega.
$$ 
\end{lemma}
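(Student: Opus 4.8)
The plan is to work with the entropy variable $s=\ln\theta$ and to exploit the uniform-in-$\epsilon$ bounds already at hand. By Lemma \ref{ene-bound} (whose a priori estimates hold for the smooth approximate solutions produced by Theorem \ref{t2}, with constants independent of $\epsilon$ and $k$) we have $\|\nabla s_\epsilon\|_{L_2(\Omega)}\le C$ together with $\int_{\partial \Omega}(e^{s_\epsilon}+e^{-s_\epsilon})\,d\sigma\le C$. \emph{First}, I would turn these into a bound on $s_\epsilon$ itself in $W^1_2(\Omega)$: since $|t|\le e^{t}+e^{-t}$ for all $t\in\R$, the boundary bound yields $\|s_\epsilon\|_{L_1(\partial \Omega)}\le C$, and a trace Poincar\'e inequality of the form $\|u\|_{W^1_2(\Omega)}\le C(\Omega)\big(\|\nabla u\|_{L_2(\Omega)}+\|u\|_{L_1(\partial \Omega)}\big)$ (proved elementarily, as for the Poincar\'e inequality used in Section 2) then gives $\|s_\epsilon\|_{W^1_2(\Omega)}\le C$, uniformly in $\epsilon$.

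\emph{Second}, the compact embedding $W^1_2(\Omega)\hookrightarrow L_2(\Omega)$ (Rellich--Kondrachov, $\Omega\in C^2$ bounded) provides a subsequence along which $s_\epsilon\to s$ strongly in $L_2(\Omega)$ and a.e.\ in $\Omega$, with $s\in W^1_2(\Omega)$; in particular $s$ is finite a.e. Consequently $\theta_\epsilon=e^{s_\epsilon}\to e^{s}$ a.e.\ in $\Omega$. Since $\theta_\epsilon\to\theta$ in $L_1(\Omega)$ already by (\ref{e2}), uniqueness of limits forces $\theta=e^{s}$ a.e.; hence $s=\ln\theta$ and $\theta=e^{s}>0$ a.e.\ in $\Omega$, which is the positivity we need for the limit system.

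\emph{Third}, to recover the stated convergence $\theta_\epsilon\to\theta$ in $L_q(\Omega)$ for every $q<3m$ I would invoke the Vitali convergence theorem: $\theta_\epsilon\to\theta$ a.e., and $\{\theta_\epsilon\}$ is bounded in $L_{3m}(\Omega)$ by Theorem \ref{t2}, so $\{\theta_\epsilon^{q}\}$ is equi-integrable on $\Omega$ for each $q<3m$; this gives strong $L_q$ convergence and re-derives the last line of (\ref{e2}) in a self-contained way.

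The only genuinely delicate point is the first step, namely controlling $s_\epsilon$ rather than merely $\nabla s_\epsilon$. This is exactly where the boundary term $\int_{\partial \Omega}(e^{s}+e^{-s})\,d\sigma$ from the energy inequality is indispensable, and it is present precisely because of the heat exchange boundary condition and the regularizing term $\epsilon s$ in (\ref{bc})$_1$, which in the integrated entropy identity produces the control of $\epsilon|s^-|e^{|s^-|}$ on $\partial\Omega$; without it the very negative values of $\ln\theta$ would be unreachable. Everything else is routine compactness and measure-theoretic reasoning.
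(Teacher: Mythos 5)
Your proposal is correct and follows essentially the same route as the paper: the uniform bound on $\nabla s_\epsilon$ in $L_2(\Omega)$ combined with the boundary control of $e^{s_\epsilon}+e^{-s_\epsilon}$ (the paper converts it to an $L_2(\partial\Omega)$ bound on $s_\epsilon$, you to an $L_1(\partial\Omega)$ bound — either suffices for the trace--Poincar\'e inequality), then Rellich compactness to get $s_\epsilon\to s$ in $L_2(\Omega)$, identification of $\theta=e^s$ via a.e.\ convergence and Vitali, and positivity from $s$ being finite a.e. Your remark that the boundary term coming from the $\epsilon s$ regularization in (\ref{bc})$_1$ is what makes the lower bound on $\theta$ reachable is exactly the point the paper is exploiting.
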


\begin{proof}
Recall that from the energy bound we have the following information
\begin{equation*}\label{h8}
\int_\Omega |\nabla s_\epsilon|^2dx + \int_{\partial \Omega} (e^{s_\epsilon}+e^{-s_\epsilon}) d \sigma < C
\end{equation*}
which in particular gives
\begin{equation*}\label{h9}
\int_\Omega |\nabla s_\epsilon|^2dx + \int_{\partial \Omega} s^2_\epsilon d \sigma < C.
\end{equation*}
Thus we are allowed to choose a subsequence $s_\epsilon \to s$ in $L_2(\Omega)$. Recall also that 
$\theta_\epsilon = e^{s_\epsilon}$ and $\theta_\epsilon \to \theta$ strongly in
$L_r(\Omega)$, $r<3m$. Hence by Vitali's theorem (for a subsequence, if necessary)
$$
\begin{array}{c}
\displaystyle 
e^{s_\epsilon} \to e^s \qquad \mbox{ in } L_r(\Omega) \qquad \mbox{ and }
 \theta = e^s \qquad \mbox{ with } s \in L_2(\Omega).
\end{array}
$$
Thus $\theta >0$ a.e. in $\Omega$ as $s>-\infty$ a.e. in $\Omega$.
\end{proof}

A crucial role in the proof of the strong convergence of the density is played by a quantity called the effective viscous flux. To define
it, we need the Helmholtz decomposition of the velocity
\begin{equation}\label{e4}
\bv=\nabla \phi  +\rot \, \bA,
\end{equation}
where the divergence-free part of the velocity is given as a solution to the following 
elliptic problem
\begin{equation}\label{e5}
\begin{array}{c}
\rot\,\rot\,\bA=\rot\,\bv=\bomega \mbox{ \ \ \ \ in } \Omega,\\
\div\,\rot\, \bA=0 \mbox{ \ \ \ \ in } \Omega,\\
\rot \bA\cdot \bn =0 \mbox{ \ \ \ \ at } \partial \Omega.
\end{array}
\end{equation}
The potential part of the velocity is given  by
the solution to 
\begin{equation}\label{e6}
\begin{array}{c}
\Delta \phi=\div \bv \mbox{ \ \ in } \Omega,\\
\frac{\partial \phi}{\partial \bv}=0 \mbox{ \ \ at } \partial \Omega,
\end{array}
\qquad\qquad  \int_\Omega \phi dx =0.
\end{equation}
The classical theory for elliptic equations gives us for $1<q<\infty$
\begin{equation*}\label{e7}
\begin{array}{rcl}
||\nabla \rot\,\bA||_{L_q(\Omega)}\leq C||\bomega||_{L_q(\Omega)},
& \qquad &
||\nabla^2 \rot\,\bA||_{L_q(\Omega)}|| \leq C||\bomega||_{W^1_q(\Omega)}, \\[8pt]
||\nabla^2 \phi||_{L_q(\Omega)}\leq C||\div \bv||_{L_q(\Omega)}, &\qquad & 
||\nabla^3 \phi||_{L_q(\Omega)}\leq C||\div \bv||_{W^1_q(\Omega)}.
\end{array}
\end{equation*}

The properties of the slip boundary condition enables us to state the following problem
\begin{equation}\label{e8}
\begin{array}{c}
\displaystyle -\mu\Delta \bomega_\epsilon= \rot\big(K(\rho_\epsilon)\rho_\epsilon \bF - K(\rho_\epsilon)\rho_\epsilon
\bv_\epsilon \cdot \nabla \bv_\epsilon \\[8pt]
\displaystyle -
\frac{1}{2} \epsilon h K(\rho_\epsilon)\bv_\epsilon + \frac{1}{2} \epsilon \rho_\epsilon \bv_\epsilon\big)
-\rot(\frac{1}{2} \epsilon \Delta \rho_\epsilon \bv_\epsilon):=\bH_1+\bH_2 \mbox{ \ \ in } \Omega, \\[12pt]
\bomega_\epsilon\cdot \btau_1=-(2\chi_2 -f/\mu)\bv_\epsilon \cdot \btau_2 \mbox{ \ \ at } \partial \Omega,\\[8pt]
\bomega_\epsilon\cdot \btau_2=(2\chi_1 -f/\mu)\bv_\epsilon \cdot \btau_1 \mbox{ \ \ at } \partial \Omega,\\[8pt]
\div \bomega_\epsilon =0 \mbox{ \ \ at }\partial \Omega,
\end{array}
\end{equation}
where $\chi_k$ are curvatures related with directions $\btau_k$. For the proof of relations $(\ref{e8})_{2,3}$ -- see 
\cite{Mu} or \cite{MuRa}.

The structure of $\bomega_\epsilon$ gives us a hint to consider it as a sum of three components
\begin{equation}\label{e9}
\bomega_\epsilon=\bomega_\epsilon^0+\bomega_\epsilon^1+\bomega_\epsilon^2,
\end{equation}
where they are determined by the following systems
\begin{equation}\label{e10}
\begin{array}{cccr}
-\mu \Delta \bomega_\epsilon^0=0, &-\mu \Delta \bomega_\epsilon^1=\bH_1, &
-\mu \Delta \bomega_\epsilon^2=\bH_2 & \mbox{ in } \Omega,\\
\bomega_\epsilon^0\cdot \btau_1=-(2\chi_2 -f/\mu)\bv_\epsilon \cdot \btau_2, &
\bomega_\epsilon^1\cdot \btau_1=0, & \bomega_\epsilon^2\cdot \btau_1=0 & \mbox{ at } \partial \Omega,\\
\bomega_\epsilon^0\cdot \btau_2=(2\chi_1 -f/\mu)\bv_\epsilon \cdot \btau_1, &
\bomega_\epsilon^1\cdot \btau_2=0, & \bomega_\epsilon^2\cdot \btau_2=0 & \mbox{ at } \partial \Omega,\\
\div \bomega_\epsilon^0=0, & \div \bomega_\epsilon^1=0, &\div \bomega_\epsilon^2=0 & \mbox{ at } \partial \Omega.
\end{array}
\end{equation}
\begin{lemma} \label{l 4.3} For the vorticity $\bomega_\epsilon$ written in the form (\ref{e9}) we have:\footnote{Note that we can prove
that $\|\bomega_\epsilon^2\|_{L_r(\Omega)} = o(\epsilon)$ for $\epsilon \to 0^+$ for any $r<3m$. As we do not need it and the proof of the rate is
slightly more complicated, we skip it. Analogously we may consider the other inequality also for $q<2$, with different powers of $k$.}
\begin{equation}\label{e11}
\begin{array}{c}
||\bomega_\epsilon^2||_{L_r(\Omega)}\leq C(k)\epsilon^{1/2} \mbox{ \ \ \ for } 1\leq r \leq 2,
\\[8pt]
||\bomega_\epsilon^0||_{W^1_q(\Omega)}+||\bomega_\epsilon^1||_{W^1_q(\Omega)}\leq C(1+k^{1+\gamma (\frac 43 -\frac 2q)}) \mbox{ \ \ \ for }
2\leq q \leq 3m.
\end{array}
\end{equation}
\end{lemma}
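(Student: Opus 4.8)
The idea is to estimate each of the three vorticity components by the standard $L_q$-elliptic theory applied to the systems (\ref{e10}), using as input the a priori bounds from Lemma~\ref{l 4.1} and Lemma~\ref{ene-bound}. First I would treat $\bomega_\epsilon^2$. Here the right-hand side is $\bH_2 = -\rot(\frac12 \epsilon \lap \rho_\epsilon \bv_\epsilon)$, so $\bomega_\epsilon^2$ solves a Laplace equation with homogeneous boundary conditions and the term $\frac12\epsilon\lap\rho_\epsilon\bv_\epsilon$ as a forcing under the curl. Since $-\Delta\bomega_\epsilon^2 = \rot(\cdot)$, one may estimate $\|\bomega_\epsilon^2\|_{L_r(\Omega)}$ by $\|\frac12\epsilon\lap\rho_\epsilon\bv_\epsilon\|_{W^{-1}_r(\Omega)}$, hence by $C\epsilon\|\lap\rho_\epsilon\bv_\epsilon\|_{W^{-1}_r(\Omega)}$. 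Writing $\lap\rho_\epsilon\bv_\epsilon = \div(\nabla\rho_\epsilon\otimes\bv_\epsilon) - \nabla\rho_\epsilon\cdot\nabla\bv_\epsilon$, and using $\sqrt\epsilon\|\nabla\rho_\epsilon\|_{L_2(\Omega)}\leq C(k)$ from (\ref{b-m}) together with $\bv_\epsilon\in W^1_{3m}(\Omega)$, the worst term is $\epsilon\|\nabla\rho_\epsilon\otimes\bv_\epsilon\|_{L_r(\Omega)} \leq \epsilon\|\nabla\rho_\epsilon\|_{L_2(\Omega)}\|\bv_\epsilon\|_{L_\infty(\Omega)} \leq \sqrt\epsilon\cdot C(k)$ for $r\leq 2$; the lower-order piece $\epsilon\nabla\rho_\epsilon\cdot\nabla\bv_\epsilon$ is controlled similarly after using $\nabla\bv_\epsilon\in L_{3m}\hookrightarrow L_2$. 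This yields the first line of (\ref{e11}).

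Next I would handle $\bomega_\epsilon^0$, which is harmonic with inhomogeneous boundary data $(2\chi_k - f/\mu)\bv_\epsilon\cdot\btau_j$ on $\partial\Omega$. Since $\Omega\in C^2$ the curvatures $\chi_k$ and the tangent fields $\btau_k$ are bounded, and the boundary datum has $W^{1-1/q}_q(\partial\Omega)$-norm bounded by $C\|\bv_\epsilon\|_{W^1_q(\Omega)}$ (trace theorem). Elliptic regularity for the Laplacian then gives $\|\bomega_\epsilon^0\|_{W^1_q(\Omega)} \leq C\|\bv_\epsilon\|_{W^1_q(\Omega)}$. For $q\leq 3m$ one interpolates $\|\bv_\epsilon\|_{W^1_q(\Omega)}$ between the $\epsilon$-uniform bound $\|\bv_\epsilon\|_{H^1(\Omega)}\leq C$ (from Lemma~\ref{ene-bound}) and $\|\bv_\epsilon\|_{W^1_{3m}(\Omega)}\leq C(1+k^{\frac\gamma3\frac{3m-2}{m}})$ (Lemma~\ref{l 4.1}); the interpolation exponent is $\theta$ with $\frac1q = \frac{1-\theta}{2}+\frac{\theta}{3m}$, and bookkeeping of the resulting power of $k$ produces the exponent $1+\gamma(\frac43-\frac2q)$ claimed in (\ref{e11}).

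Finally, $\bomega_\epsilon^1$ solves $-\mu\Delta\bomega_\epsilon^1 = \bH_1$ with homogeneous boundary data, where $\bH_1 = \rot\big(K(\rho_\epsilon)\rho_\epsilon\bF - K(\rho_\epsilon)\rho_\epsilon\bv_\epsilon\cdot\nabla\bv_\epsilon - \frac12\epsilon hK(\rho_\epsilon)\bv_\epsilon + \frac12\epsilon\rho_\epsilon\bv_\epsilon\big)$. Writing $-\Delta\bomega_\epsilon^1 = \mu^{-1}\rot(\cdot)$, one gets $\|\bomega_\epsilon^1\|_{W^1_q(\Omega)} \leq C\|\,\text{argument of }\rot\,\|_{L_q(\Omega)}$. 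The $\epsilon$-terms are negligible (bounded by $\epsilon C(k)$, absorbable into the claimed bound). The force term is $\|K(\rho_\epsilon)\rho_\epsilon\bF\|_{L_q(\Omega)}\leq k\|\bF\|_{L_\infty(\Omega)}$. The main term is the convective one: $\|K(\rho_\epsilon)\rho_\epsilon\bv_\epsilon\cdot\nabla\bv_\epsilon\|_{L_q(\Omega)} \leq k\|\bv_\epsilon\|_{L_\infty(\Omega)}\|\nabla\bv_\epsilon\|_{L_q(\Omega)}$, and again one interpolates $\|\bv_\epsilon\|_{L_\infty}$ and $\|\nabla\bv_\epsilon\|_{L_q}$ between the $H^1$-bound and the $W^1_{3m}$-bound of Lemma~\ref{l 4.1}, feeding in the $k$-dependence of the latter. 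Tracking powers of $k$ carefully (using $\gamma>3$, $m=l+1>\frac{3\gamma-1}{3\gamma-7}$, and Young's inequality where a self-improving term appears) gives the stated $C(1+k^{1+\gamma(\frac43-\frac2q)})$. The main obstacle is precisely this last bookkeeping step: one must check that the exponent of $k$ coming from the convective term, after interpolation, does not exceed $1+\gamma(\frac43-\frac2q)$; this is where the hypotheses relating $\gamma$ and $m$ are used, and it is worth verifying the borderline cases $q=2$ and $q=3m$ explicitly to make sure the claimed exponent is sharp enough for the later density-compactness argument.
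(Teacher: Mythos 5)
Your plan coincides with the paper's proof: the same duality ($W^{-1}_q$) estimates for the curl right-hand sides, the bound $\sqrt{\epsilon}\,\|\nabla\rho_\epsilon\|_{L_2(\Omega)}\le C(k)$ for $\bomega_\epsilon^2$, and interpolation of $\|\bv_\epsilon\|_{L_\infty(\Omega)}$ and $\|\nabla\bv_\epsilon\|_{L_q(\Omega)}$ between the $k$-uniform $H^1$ bound and the $W^1_{3m}$ bound of Lemma \ref{l 4.1}, whose exponents indeed combine to exactly $1+\gamma(\frac43-\frac2q)$ for the convective term (the $\bomega_\epsilon^0$ part actually yields the smaller power $\gamma(1-\frac2q)$ and is dominated). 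The one point you gloss over is that (\ref{e10}) is not a Dirichlet problem for the Laplacian --- the tangential components and $\div\bomega_\epsilon=0$ are prescribed at $\partial\Omega$ --- so the $W^1_q$ estimate rests on the theory of such elliptic systems (\cite{So}, \cite{Zaj}); the paper accordingly reduces $\bomega_\epsilon^0$ to homogeneous boundary data by subtracting a divergence-free Stokes extension $\balpha_0$ of the boundary values satisfying $\|\balpha_0\|_{W^1_q(\Omega)}\le C\|\bv_\epsilon\|_{W^1_q(\Omega)}$.
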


\begin{proof} First, let us consider $\bomega_\epsilon^0$. Take $\balpha_0$ any divergence--free extension of the boundary data to
$\bomega_\epsilon$, e.g. in the form of a solution to the following Stokes problem
\begin{equation}\label{e12}
\begin{array}{c}
-\mu\Delta \balpha_0 +\nabla p_0=0 \mbox{ \ \ \ \ \ \ in } \Omega,\\
\div \balpha_0=0 \mbox{ \ \ \ \ \ \  in } \Omega,\\
\balpha_0\cdot \btau_1=-(2\chi_2 -f/\mu)\bv_\epsilon \cdot \btau_2  \mbox{ \ \ \ \ \ \ at } 
\partial \Omega,\\
\balpha_0\cdot \btau_2=(2\chi_1 -f/\mu)\bv_\epsilon \cdot \btau_2
\mbox{ \ \ \ \ \ \  at } \partial \Omega,\\
\balpha_0\cdot \bn=0 \mbox{ \ \ \ \ \ \ at } \partial \Omega.
\end{array}
\end{equation}
Note that $\bv_\epsilon \in W^{1-1/(3m)}_{3m}(\partial \Omega)$, thus 
$\balpha_0 \in W^1_{3m}(\Omega)$ with the estimate
$$
\|\balpha_0\|_{W^1_q(\Omega)}\leq C \|\bv_\epsilon\|_{W^1_q(\Omega)}, \qquad 1<q\leq3m.
$$
Thus we may transform the system for $\bomega_\epsilon^0$ to the form
\begin{equation}\label{e13}
\begin{array}{c}
-\mu\Delta (\bomega_\epsilon^0-\balpha_0)=\mu\Delta \balpha_0 \mbox{ \ \ \ in } \Omega,\\
(\bomega_\epsilon^0-\balpha_0)\cdot \btau_1=0 \mbox{ \ \ \ at } \partial \Omega,\\
(\bomega_\epsilon^0-\balpha_0)\cdot \btau_2=0 \mbox{ \ \ \ at } \partial \Omega,\\
\div (\bomega_\epsilon^0 -\balpha_0)=0 \mbox{ \ \ \ at } \partial \Omega.
\end{array}
\end{equation}
Note that $\Delta \balpha_0 \in W^{-1}_{3m}(\Omega)$. Here $W^{-1}_p(\Omega)$ denotes the dual space to 
$$
\{ \bff \in W^1_p(\Omega)\cap \{ \bff\cdot \btau_1= \bff \cdot \btau_2=0 \mbox{ at } \partial \Omega\} \}.
$$

As the system for $\bomega_\epsilon^0$ has the same structure as that for $\bomega_\epsilon^1$, we get (see \cite{So}, \cite{Zaj}):
\begin{equation*}\label{e14}
||\bomega_\epsilon^1||_{W^1_q(\Omega)}\leq C||\bH_1||_{W^{-1}_{q}(\Omega)}
\mbox{ \ \ and \ \ }
||\bomega_\epsilon^0||_{W^1_q(\Omega)}\leq C||\bv_\epsilon||_{W^1_q(\Omega)}, \quad 1<q\leq 3m.
\end{equation*}

Analyzing the form of $\bH_1$ we see that the only not elementary term is the convective one; so
we obtain
\begin{equation*}\label{e15}
||\bomega_\epsilon^1||_{W^1_q(\Omega)}\leq C(1+||K(\rho_\epsilon)\rho_\epsilon \bv_\epsilon \cdot \nabla
\bv_\epsilon||_{L_q(\Omega)}).
\end{equation*}

We easily see that for $q\geq 2$
\begin{equation*}\label{e16}
||K(\rho_\epsilon)\rho_\epsilon \bv_\epsilon \cdot \nabla \bv_\epsilon||_{L_q(\Omega)}\leq
k||\bv_\epsilon||_{L_\infty(\Omega)}||\nabla \bv_\epsilon||_{L_q(\Omega)}.
\end{equation*}
Using interpolation inequalities as in Lemma \ref{l 4.1} we prove that
\begin{equation*}\label{e19}
\begin{array}{rcl}
\displaystyle 
||K(\rho_\epsilon)\rho_\epsilon \bv_\epsilon \cdot \nabla \bv_\epsilon||_{L_q(\Omega)} &\leq& Ck 
\|\bv_\epsilon\|_{L_6(\Omega)}^{\frac{2(m-1)}{3m-2}} 
\|\nabla \bv_\epsilon\|_{L_{3m}(\Omega)}^{\frac{m}{3m-2}} \|\nabla \bv_\epsilon\|_{L_2(\Omega)}^{\frac{6m-2q}{(3m-2)q}} 
\|\nabla \bv_\epsilon\|_{L_{3m}(\Omega)}^{\frac{3m(q-2)}{(3m-2)q}} \\[8pt]
\displaystyle &\leq& C k ^{1+ \gamma(\frac 43-\frac 2q)}.
\end{array}
\end{equation*}
Evidently, the estimate for $\bomega_\epsilon^0$ is less restrictive.

Similarly, for $\bomega_\epsilon^2$ we have
\begin{equation*}\label{e21}
||\bomega^2_\epsilon||_{L_q(\Omega)} \leq C||\epsilon \Delta \rho_\epsilon \bv_\epsilon||_{W^{-1}_q(\Omega)}\leq
C\epsilon\sup_{\phi}|\int_\Omega \Delta \rho_\epsilon \bv_\epsilon \phi dx|,
\end{equation*}
where the sup is taken over all functions belonging to $W^1_q(\Omega)$ with $1/p+1/q=1$.

From the continuity equation we know that
\begin{equation*}\label{e22}
\sqrt{\epsilon}||\nabla \rho_\epsilon||_{L_2(\Omega)} \leq C(k).
\end{equation*}
(For $q>2$ we have only $\epsilon \|\nabla \varrho_\epsilon\|_{L_q(\Omega)} \leq C.$) As $q\leq 2$,

\begin{equation*}\label{e23}
||\bomega^2_\epsilon||_{L_q(\Omega)}\leq C\epsilon (\|\nabla \rho_\epsilon\|_{L_2(\Omega)} \|\bv_\epsilon\|_{L_\infty(\Omega)} + 
\|\nabla \rho_\epsilon\|_{L_2(\Omega)} \|\nabla \bv_\epsilon\|_{L_{3m}(\Omega)}) \leq C(k) \epsilon^{\frac 12}.
\end{equation*}
The lemma is proved.
\end{proof}

\smallskip

We now introduce the fundamental quantity --- the effective viscous flux --- which is in fact the potential part of the momentum equation.
Using the Helmholtz decomposition in the approximative momentum equation we have 
\begin{equation*}\label{e24}
\begin{array}{c}
\displaystyle \nabla (-(2 \mu+\nu) \Delta \phi_\epsilon +P(\rho_\epsilon,\theta_\epsilon))=
\mu \Delta \rot \bA_\epsilon + K(\rho_\epsilon)\rho_\epsilon \bF 
\\[12pt]
\displaystyle - K(\rho_\epsilon)\rho_\epsilon
\bv_\epsilon \cdot \nabla \bv_\epsilon 
-\frac 12 \epsilon h K(\rho_\epsilon)\bv_\epsilon +\frac 12 \epsilon \rho_\epsilon \bv_\epsilon
-\frac 12 \epsilon\Delta \rho_\epsilon \bv_\epsilon.
\end{array}
\end{equation*}
We define
\begin{equation} \label{evf_eps}
G_\varepsilon = -(2 \mu+\nu) \Delta \phi_\epsilon +P(\rho_\epsilon,\theta_\epsilon) = -(2 \mu+\nu) \div \bv_\epsilon 
+P(\rho_\epsilon,\theta_\epsilon)
\end{equation}
and its limit version
\begin{equation} \label{evf}
G =  -(2 \mu+\nu) \div \bv 
+\overline{P(\rho,\theta)}.
\end{equation}
Note that we are able to control  integrals $\int_\Omega G_\epsilon dx = \int_\Omega P(\rho_\epsilon,\theta_\epsilon) dx$ and 
$\int_\Omega G dx= \int_\Omega  \overline{P(\rho,\theta)} dx$, where $\overline{P(\rho,\theta)} = \overline{P_b(\rho)} + \theta \big(\overline{\int_0^\rho
K(t)dt}\big)$.

The result of the lemma below gives the most important properties of the effective viscous flux, guaranteeing the compactness of 
$\{G_\epsilon\}$ as well as the pointwise bound
of the limit in term of the parameter $k$ from definition (\ref{def-K}).
\smallskip

\begin{lemma} \label{l 4.4} 
We have, up to a subsequence $\epsilon \to 0^{+}$:
\begin{equation}\label{e26}
G_\epsilon \to G \mbox{ strongly in } L_2(\Omega) 
\end{equation}
and
\begin{equation} \label{e26a}
||G||_{L_\infty} \leq C(\eta)(1+k^{1+\frac 23 \gamma +\eta}) \mbox{ \ \ \ for any $\eta>0$.}
\end{equation}
\end{lemma}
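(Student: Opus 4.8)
The plan is to exploit the classical Lions-type argument for the effective viscous flux, adapted to the present approximative setting. The starting point is the elliptic equation satisfied by $G_\epsilon$: from the Helmholtz decomposition of the approximative momentum equation one reads off that $\nabla G_\epsilon = \mu \Delta \rot \bA_\epsilon + \bH_1 + \bH_2 + K(\rho_\epsilon)\rho_\epsilon \bF$ in the sense described before (\ref{evf_eps}), where $\bH_1$, $\bH_2$ are the right-hand sides appearing in (\ref{e8}). First I would establish compactness of $\{G_\epsilon\}$. Since $P(\rho_\epsilon,\theta_\epsilon) = P_b(\rho_\epsilon) + \theta_\epsilon \int_0^{\rho_\epsilon} K(t)\,dt$ is bounded in $L_\infty(\Omega)$ (density bound plus $\theta_\epsilon$ bounded in $L_{3m}$, but here we only need $L_2$) — more precisely bounded in $L_{3m}(\Omega)$ — and $\div \bv_\epsilon$ is bounded in $L_{3m}(\Omega)$, we have $G_\epsilon$ bounded in $L_{3m}(\Omega)$. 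To upgrade this weak convergence to strong $L_2$ convergence I would use the equation $-(2\mu+\nu)\Delta \phi_\epsilon + P(\rho_\epsilon,\theta_\epsilon) = G_\epsilon$ coupled with $\Delta \phi_\epsilon = \div \bv_\epsilon$, and test the difference of the momentum equation and its limit (\ref{4.2}) against $\nabla \phi_\epsilon$. Alternatively, and more cleanly, one observes that $\nabla G_\epsilon$ is, modulo the curl term which contributes nothing to the potential part, equal to a sum of terms each of which is bounded in some $L_q$ with $q$ large enough to get a gradient estimate $\|G_\epsilon\|_{W^1_{q}(\Omega)} \leq C$ for some $q>1$ — using Lemma \ref{l 4.3} for the vorticity contributions and Lemma \ref{l 4.1} for the convective term $K(\rho_\epsilon)\rho_\epsilon \bv_\epsilon \cdot \nabla \bv_\epsilon \in L_{3m/(m+1)}(\Omega)$, together with $\epsilon$-vanishing of the $\bomega_\epsilon^2$-related terms. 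A uniform $W^1_q$ bound with $q>1$ combined with the $L_{3m}$ bound gives, by compact imbedding (Rellich), strong convergence in $L_2(\Omega)$ along a subsequence; the limit is then identified as $G$ via (\ref{e26a})'s predecessors, i.e. by passing to the limit in (\ref{evf_eps}) using (\ref{e2}).

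Next I would establish the pointwise bound (\ref{e26a}). The point is that $G$ satisfies, in the weak sense, the limit equation $\nabla G = \mu \Delta \rot \bA + \overline{K(\rho)\rho}\,\bF - \overline{K(\rho)\rho}\,\bv\cdot\nabla\bv$ (the $\epsilon$-terms have dropped out by Lemma \ref{l 4.3}), together with the mean-value control $\int_\Omega G\,dx = \int_\Omega \overline{P(\rho,\theta)}\,dx$, which is bounded independently of $k$ by the energy estimate of Lemma \ref{ene-bound} — indeed $\|P(\rho_\epsilon,\theta_\epsilon)\|_{L_2(\Omega)} \leq C$ uniformly. Thus $G$ is determined up to a constant by an elliptic equation whose right-hand side I must estimate in terms of $k$. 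The dangerous term is again the convective one: $\overline{K(\rho)\rho}\,\bv\cdot\nabla\bv$, and by Lemma \ref{l 4.1} we have $\|\bv\|_{W^1_{3m}(\Omega)} \leq C(1+k^{\frac\gamma3\frac{3m-2}{m}})$, so $\|\bv\cdot\nabla\bv\|_{L_{3m/(m+1)}(\Omega)}$ and similar norms are controlled by a power of $k$. Applying elliptic regularity (the same $W^1_q$ estimate as for the $\epsilon$-problem, now in the limit, using that the potential part is governed by a Neumann Laplacian) one gets $G$ bounded in $W^1_q(\Omega)$ with $q$ close to $3m/(m+1) \cdot 3/(3-\ldots)$ — in any case with $q>3$ since $m>1$ forces the relevant Sobolev exponent above $3$ — and hence $G \in L_\infty(\Omega)$ by the imbedding $W^1_q \hookrightarrow L_\infty$ for $q>3$. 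Tracking the powers of $k$ through the interpolation inequalities of Lemma \ref{l 4.1} (exactly the computation $C.T. \leq Ck\|\bv\|_{W^1_2}^{2\frac{2m-1}{3m-2}}\|\bv\|_{W^1_{3m}}^{2\frac{m-1}{3m-2}}$ combined with $\|\bv\|_{W^1_2} \leq C$ from the energy bound) yields an exponent of the form $1 + \frac23\gamma$ up to an arbitrarily small loss $\eta$ coming from the fact that to reach $L_\infty$ one needs $q$ strictly above $3$ rather than exactly $3$, forcing a slightly worse interpolation exponent; this is the source of the $+\eta$ in (\ref{e26a}).

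The main obstacle I anticipate is the bookkeeping of the powers of $k$ in the second part: one must be careful that the convective term, estimated via the mixed-norm interpolation of Lemma \ref{l 4.1}, produces precisely $k^{1+\frac23\gamma+\eta}$ and not something worse, since $\bv$ itself already costs $k^{\frac\gamma3\frac{3m-2}{m}}$ and this must be combined with the $L_\infty$-critical Sobolev exponent in a way that keeps the final power at $1+\frac23\gamma$. The computation hinges on $\frac{2(m-1)}{3m-2}<1$, which allows a Young-inequality absorption exactly as at the end of the proof of Lemma \ref{l 4.1}, and on the fact that the $\bF$-term and the pressure mean-value are $k$-independent, so they contribute only the additive constant. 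The strong convergence part, by contrast, is routine once the uniform $W^1_q$ bound with $q>1$ is in hand: it is a direct Rellich compactness argument plus identification of the limit using the already-established weak convergences (\ref{e2}). A minor technical point to handle carefully is that $G_\epsilon$ and $G$ are defined only up to knowing $\nabla G_\epsilon$, $\nabla G$; the constants are fixed by the integral identities $\int_\Omega G_\epsilon\,dx = \int_\Omega P(\rho_\epsilon,\theta_\epsilon)\,dx$ and $\int_\Omega G\,dx = \int_\Omega \overline{P(\rho,\theta)}\,dx$, and the convergence of these integrals follows from the weak-$*$ convergence of $P_b(\rho_\epsilon)$ and the strong convergence $\theta_\epsilon \to \theta$ in $L_q$, $q<3m$, established in Lemma \ref{l 4.2}.
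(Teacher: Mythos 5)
Your proposal is correct and, for the compactness statement, follows essentially the paper's route: split $G_\epsilon=G^1_\epsilon+G^2_\epsilon$, where $\nabla G^2_\epsilon=-\frac12\epsilon\Delta\rho_\epsilon\bv_\epsilon-\mu\rot\bomega^2_\epsilon$ so that $\|G^2_\epsilon\|_{L_2(\Omega)}\leq C(k)\epsilon^{1/2}$ by Lemma \ref{l 4.3}, while $G^1_\epsilon$ is bounded in $W^1_q(\Omega)$ for $2\leq q\leq 3m$ with constant $C(1+k^{1+\gamma(\frac43-\frac2q)})$; note that it is really only this first piece, not the full $G_\epsilon$, that carries a uniform $W^1_q$ bound, so the decomposition is not optional. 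For the $L_\infty$ bound you propose passing to the limit equation for $G$ and applying elliptic regularity there; the paper instead simply evaluates the uniform bound $\sup_\epsilon\|G^1_\epsilon\|_{W^1_q(\Omega)}\leq C(1+k^{1+\gamma(\frac43-\frac2q)})$ at $q=3+\tilde\eta$ and uses the imbedding $W^1_q(\Omega)\hookrightarrow L_\infty(\Omega)$ together with lower semicontinuity of the norm under weak convergence, which yields exactly $1+\gamma(\frac43-\frac{2}{3+\tilde\eta})=1+\frac23\gamma+\eta$. The paper's variant is preferable because your limit-equation route requires identifying the weak limit of $K(\rho_\epsilon)\rho_\epsilon\bv_\epsilon\cdot\nabla\bv_\epsilon$ as $\overline{K(\rho)\rho}\,\bv\cdot\nabla\bv$, a product of two merely weakly converging sequences at this stage of the argument; working uniformly in $\epsilon$ on $G^1_\epsilon$ avoids this issue entirely, while the exponent bookkeeping (the interpolation between $\|\nabla\bv_\epsilon\|_{L_2}$ and $\|\nabla\bv_\epsilon\|_{L_{3m}}$ from the proof of Lemma \ref{l 4.3}, combined with Lemma \ref{l 4.1}) is the same in both versions and you have identified the source of the $\eta$-loss correctly.
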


\begin{proof} 
 The function $G_\epsilon$ can be naturally decomposed as
\begin{equation*}\label{e27}
G_\epsilon=G^1_\epsilon +G^2_\epsilon,
\end{equation*}
where $\int_\Omega G^2_\epsilon dx=0$ and
$\nabla G_\epsilon^2=-\frac 12 \epsilon \Delta \rho_\epsilon \bv_\epsilon- \mu \rot \,\bomega_\epsilon^2$.
Thus
\begin{equation*}\label{e28}
||G_\epsilon^2||_{L_q(\Omega)}\leq C(\epsilon ||\Delta \rho_\epsilon \bv_\epsilon||_{W^{-1}_q(\Omega)}+ 
\mu \|\rot\, \bomega_\epsilon^2\|_{W^{-1}_q(\Omega)}).
\end{equation*}
Using Lemma \ref{l 4.3} we see that
$$
\| G_\epsilon^2\|_{L_q(\Omega)} \leq C(k) \epsilon^{\frac 12}, \qquad 1\leq q\leq 2.
$$
Next, using again Lemma \ref{l 4.3} and calculations in its proof, we immediately see that (recall that $|\int_\Omega G_\epsilon dx|\leq
C$)
\begin{equation}\label{e30}
||G^1_\epsilon||_{W^1_q(\Omega)} \leq C(1+k^{1+\gamma (\frac 43 -\frac 2q)})
\qquad \mbox{ for } 2\leq q \leq 3m.
\end{equation}
Thus we have, at least for a subsequence
$$
G_\epsilon^1 \to G^1 \qquad \mbox{ in } L^\infty(\Omega) \mbox{ \ \ and \ \ }
G_\epsilon^2 \to 0 \mbox{ \ in \ } L_2(\Omega).
$$
Therefore
$$
G_\epsilon =G_\epsilon^1 + G_\epsilon^2 \to G^1 \qquad \mbox { in } L^q(\Omega), \qquad 1\leq q \leq 2
$$
and due to the definition, $G^1=G$. Finally, choosing $q=3+\tilde{\eta}$ in (\ref{e30})
$$
\|G\|_{L_\infty(\Omega)} \leq C(q)\|G\|_{W^1_q(\Omega)} \leq C(q) \sup_{\epsilon>0}\|G^1_\epsilon\|_{W^1_q(\Omega)} \leq C(\eta)(1+ k^{1+ \frac 23 \gamma + \eta})
$$
with $\eta>0$, arbitrarily small if $\tilde \eta$ is so. This finishes the proof of Lemma \ref{l 4.4}.
\end{proof}

\section{Limit passage}

In this section we apply the properties of the effective viscous flux shown in the previous part. First we prove a result
characterizing the sequence of approximative densities.

\begin{theorem}\label{t3}
{\it  There exits a sufficiently large number $k_0>0$ such that for $k> k_0$
\begin{equation}\label{s9}
\frac{k-3}{k}(k-3)^\gamma -||G||_{L_\infty(\Omega)} \geq 1
\end{equation}
and for a subsequence $\epsilon \to 0^{+}$ it holds
\begin{equation}\label{s2}
\lim_{\epsilon \to 0^+} |\{ x \in \Omega: \rho_\epsilon(x) > k-3 \} | =0.
\end{equation}
In particular it follows: $\overline{K(\rho)\rho}=\rho$ a.e. in $\Omega$.}
\end{theorem}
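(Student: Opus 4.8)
The claim has two parts --- the algebraic inequality (\ref{s9}) and the measure estimate (\ref{s2}) --- and it is the second one that carries the content. For (\ref{s9}) the plan is a growth comparison: by Lemma \ref{l 4.4} one has $\|G\|_{L_\infty(\Omega)}\le C(\eta)\big(1+k^{1+\frac23\gamma+\eta}\big)$ for every $\eta>0$, with $C(\eta)$ independent of $\epsilon$, whereas $\frac{k-3}{k}(k-3)^\gamma$ grows like $k^\gamma$. Since $\gamma>3$ is the same as $\gamma>1+\frac23\gamma$, one fixes $\eta>0$ with $1+\frac23\gamma+\eta<\gamma$; then $\frac{k-3}{k}(k-3)^\gamma-\|G\|_{L_\infty(\Omega)}\to+\infty$ as $k\to\infty$, so (\ref{s9}) holds for all $k$ above some $k_0$. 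We fix such a $k$.

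For (\ref{s2}) the plan is to renormalize the approximate continuity equation and feed in the effective viscous flux. Since $0\le\rho_\epsilon\le k$, we have $K(\rho_\epsilon)\equiv1$, $\int_0^{\rho_\epsilon}K(t)\,dt=\rho_\epsilon$, $P_b(\rho_\epsilon)=\rho_\epsilon^\gamma$, and $P(\rho_\epsilon,\theta_\epsilon)=\rho_\epsilon^\gamma+\theta_\epsilon\rho_\epsilon\ge\rho_\epsilon^\gamma$ as $\theta_\epsilon>0$. I would take a convex $b$ with $b\equiv 0$ on $[0,k-3]$ and $b(\rho)=\rho\log\frac{\rho}{k-3}$ for $\rho\ge k-3$ (the corner smoothed, the correction being of lower order), so that $b\ge0$, $0\le b'(\rho_\epsilon)\le 1+\log\frac{k}{k-3}$, and $\rho b'(\rho)-b(\rho)$ equals $\rho$ on $\{\rho>k-3\}$ and $0$ elsewhere. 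Multiplying $(\ref{ap-ns})_1$ by $b'(\rho_\epsilon)$, integrating over $\Omega$, using $\bv_\epsilon\cdot\bn=0$ and $\partial\rho_\epsilon/\partial\bn=0$ in the integrations by parts, dropping the two nonnegative $\epsilon$-terms $\epsilon\int_\Omega b''(\rho_\epsilon)|\nabla\rho_\epsilon|^2\,dx$ and $\epsilon\int_\Omega\rho_\epsilon b'(\rho_\epsilon)\,dx$, and then substituting $\div\bv_\epsilon=(2\mu+\nu)^{-1}\big(P(\rho_\epsilon,\theta_\epsilon)-G_\epsilon\big)$ from (\ref{evf_eps}) (so that the divergence term reduces to an integral over $\{\rho_\epsilon>k-3\}$) leaves
\[
\int_{\{\rho_\epsilon>k-3\}}\rho_\epsilon\big(P(\rho_\epsilon,\theta_\epsilon)-G_\epsilon\big)\,dx\le C(k)\,\epsilon ,
\]
the right-hand side coming from $\epsilon h\int_\Omega b'(\rho_\epsilon)\,dx$ and the bound on $b'$.

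Next I would bound the left-hand side from below. On $\{\rho_\epsilon>k-3\}$ one has $\rho_\epsilon P(\rho_\epsilon,\theta_\epsilon)\ge\rho_\epsilon^{\gamma+1}\ge(k-3)^{\gamma+1}$, while $\rho_\epsilon|G_\epsilon|\le k|G_\epsilon|$ since $\rho_\epsilon\le k$. Splitting $G_\epsilon=G_\epsilon^1+G_\epsilon^2$ as in the proof of Lemma \ref{l 4.4} --- where $G_\epsilon^1$ is bounded in $W^1_q(\Omega)$ for some $q>3$, hence $G_\epsilon^1\to G$ in $C(\overline\Omega)$, and $\|G_\epsilon^2\|_{L_1(\Omega)}\le C(k)\epsilon^{1/2}$ --- one gets, for $\epsilon$ small and any $\delta\in(0,1)$, $\int_{\{\rho_\epsilon>k-3\}}|G_\epsilon|\,dx\le(\|G\|_{L_\infty(\Omega)}+\delta)\,|\{\rho_\epsilon>k-3\}|+C(k)\epsilon^{1/2}$. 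Combining, $\big[(k-3)^{\gamma+1}-k(\|G\|_{L_\infty(\Omega)}+\delta)\big]\,|\{\rho_\epsilon>k-3\}|\le C(k)\epsilon^{1/2}$, and (\ref{s9}) multiplied by $k$ gives $(k-3)^{\gamma+1}-k\|G\|_{L_\infty(\Omega)}\ge k$, so the bracket is at least $k(1-\delta)>0$ and $|\{\rho_\epsilon>k-3\}|\le C(k)\epsilon^{1/2}\to0$, which is (\ref{s2}). The last assertion then follows because $K(\rho_\epsilon)\rho_\epsilon-\rho_\epsilon$ vanishes outside $\{\rho_\epsilon>k-3\}$ and is bounded by $k$, so it tends to $0$ in every $L_p(\Omega)$, $p<\infty$; with $\rho_\epsilon\rightharpoonup^*\rho$ and $K(\rho_\epsilon)\rho_\epsilon\rightharpoonup^*\overline{K(\rho)\rho}$ this forces $\overline{K(\rho)\rho}=\rho$ a.e.\ in $\Omega$.

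The hard part is the bookkeeping in $k$ in the second step: the renormalizing function has to be chosen so that, after the effective-flux substitution, the coefficient of $|\{\rho_\epsilon>k-3\}|$ is exactly $(k-3)^{\gamma+1}-k\|G\|_{L_\infty(\Omega)}$ up to lower-order terms --- this is what makes condition (\ref{s9}) appear, and, through the $k$-growth of $\|G\|_{L_\infty(\Omega)}$ from Lemma \ref{l 4.4}, it is where the restriction $\gamma>3$ of Theorem \ref{t1} is used. A secondary technical point is justifying the renormalization for the $W^2_p$-regular solution $\rho_\epsilon$ and smoothing the corner of $b$ without destroying convexity or the identity $\rho b'(\rho)-b(\rho)=\rho$.
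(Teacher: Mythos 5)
Your argument is correct and rests on the same mechanism as the paper's proof: test the approximate continuity equation with a function of $\rho_\epsilon$ concentrated on $\{\rho_\epsilon>k-3\}$, substitute $\div\bv_\epsilon=(2\mu+\nu)^{-1}\bigl(P(\rho_\epsilon,\theta_\epsilon)-G_\epsilon\bigr)$ from (\ref{evf_eps}), and play the lower bound $(k-3)^{\gamma+1}$ for $\rho_\epsilon P(\rho_\epsilon,\theta_\epsilon)$ against $k\,\|G\|_{L_\infty(\Omega)}$ --- which is exactly where (\ref{s9}) and, through Lemma \ref{l 4.4}, the hypothesis $\gamma>3$ enter. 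The difference is the choice of test function. The paper multiplies $(\ref{ap-ns})_1$ by $M^l(\rho_\epsilon)$, $M$ a cutoff equal to $1$ on $[0,k-3]$ and $0$ on $[k-2,\infty)$; the resulting weight $1-M^l(\rho_\epsilon)$ is only approximately the indicator of $\{\rho_\epsilon>k-3\}$, which produces the extra term $\|M^l(\rho_\epsilon)\|_{L_2(\{\rho_\epsilon>k-3\})}$ and forces a double limit (first $\epsilon\to0$ to remove $\|G-G_\epsilon\|_{L_1(\Omega)}$ and $R_\epsilon$, then $l\to\infty$ by monotone convergence). Your convex renormalizer $b$ with $\rho b'(\rho)-b(\rho)=\rho\, I_{\{\rho>k-3\}}$ produces the indicator exactly, so you obtain a one-shot estimate $|\{\rho_\epsilon>k-3\}|\leq C(k)\epsilon^{1/2}$ with a rate and no auxiliary parameter $l$; the price is smoothing the corner of $b$, which only shifts the threshold by an arbitrarily small amount and is harmless since $\rho_\epsilon\in W^2_p(\Omega)$ justifies the chain rule. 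Two points should be stated precisely. First, for the growth comparison giving (\ref{s9}) what matters is that the constant $C(\eta)$ in $\|G\|_{L_\infty(\Omega)}\leq C(\eta)(1+k^{1+\frac23\gamma+\eta})$ is independent of $k$ (you wrote ``independent of $\epsilon$''); this holds because it traces back to the $k$-independent energy bounds of Lemma \ref{ene-bound}. Second, your simplification $K(\rho_\epsilon)\equiv1$ uses $0\leq\rho_\epsilon\leq k$ together with continuity of $K$ at $t=k$, which the construction (\ref{def-K}) provides. With these remarks the proof is complete and, if anything, slightly cleaner than the original.
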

\smallskip

\begin{proof}  We define a smooth function
$M:\R^{+}_0 \to [0,1]$ such that
\begin{equation*}\label{s3}
M(t)=\left\{
\begin{array}{lcr}
1 & \mbox{ for } & t \leq k-3 \\
\in [0,1] & \mbox{ for } & k-3 < t < k-2 \\
0 & \mbox{ for } & k-2 \leq t
\end{array}
\right.
\end{equation*}
and  $M'(t)<0$ for $t \in (k-3,k-2)$.

We follow the method  introduced in \cite{MuPo}. First we multiply the approximative continuity equation (\ref{ap-ns})$_1$
by $M^l(\rho_\epsilon)$ for $l \in \mathbb N$ getting
\begin{equation*}\label{s4}
\int_\Omega \left( \int_0^{\rho_\epsilon(x)}\,t\,l\,M^{l-1}(t)M'(t)dt\right)\div \bv_\epsilon \geq R_\epsilon
\end{equation*}
with $R_\epsilon \to 0$ as $\epsilon \to 0$, as
\begin{equation*}\label{s5}
\epsilon \int_\Omega M^l(\rho_\epsilon) \Delta \rho_\epsilon dx = -\epsilon l \int_\Omega
M^{l-1}(\rho_\epsilon) M'(\rho_\epsilon) |\nabla \rho_\epsilon|^2 dx \geq 0.
\end{equation*}

Next, recalling  definitions of $G_\epsilon$ and $M$, we obtain
\begin{equation*}\label{s6}
\begin{array}{c}
\displaystyle
-(k-3) \int_\Omega\Big(\int_0^{\rho_\epsilon(x)} l M^{l-1}(t)M'(t)dt\Big) P(\rho_\epsilon,\theta_\epsilon) dx
\\[8pt]
\displaystyle
\leq k \Big|\int_\Omega \Big(\int_0^{\rho_\epsilon (x)} -l M^{l-1}(t)M'(t) dt\Big) G_\epsilon dx \Big| +R_\epsilon.
\end{array}
\end{equation*}

Thus the properties of  $M$ lead us to the following inequality
\begin{equation*}\label{s7}
\begin{array}{c}
\displaystyle\frac{k-3}{k} \int_{\{\rho_\epsilon > k-3\}}(1-M^l(\rho_\epsilon))P(\rho_\epsilon,\theta_\epsilon)dx
\leq \int_{\{\rho_\epsilon > k-3\}}(1-M^l(\rho_\epsilon))|G_\epsilon|dx + |R_\epsilon|.
\end{array}
\end{equation*}

From the explicit form of the pressure function  (\ref{ap-P}) we find
\begin{equation*}\label{s8}
\begin{array}{c}
\displaystyle \frac{k-3}{k}(k-3)^\gamma |\{\rho_\epsilon > k-3\}|-\frac{k-3}{k}||P(\rho_\epsilon,\theta_\epsilon)||_{
L_2(\Omega)}||M^l(\rho_\epsilon)||_{L_2(\Omega)}
\\[12pt]
\displaystyle \leq ||G||_{L_\infty(\Omega)}|\{\rho_\epsilon > k-3\}|+
||G-G_\epsilon||_{L_1(\Omega)}+|R_\epsilon|.
\end{array}
\end{equation*}

But by Lemma \ref{l 4.4} -- the inequality $(\ref{e26a})$ -- we are able to choose  $k_0$ so large that
for all $k>k_0$ we have (\ref{s9}),
since $\gamma > 3$ and $||G||_{L_\infty(\Omega)}\leq C_\eta(1+k^{1+\frac{2}{3} \gamma +\eta})$
with $0<\eta\leq \frac{\gamma-3}{6}$.

Hence we get
\begin{equation}\label{s10}
|\{x \in \Omega: \rho_\epsilon(x) > k-3\}|\leq
C\left(||M^l(\rho_\epsilon)||_{L_2(\{\rho_\epsilon>k-3\})}+||G-G_\epsilon||_{L_1(\Omega)}+|R_\epsilon|\right).
\end{equation}

Now, let us fix $\delta>0$. Then there exists $\epsilon_0>0$ such that for $\epsilon < \epsilon_0$ 
\begin{equation}\label{s11}
C(||G-G_\epsilon||_{L_1(\Omega)}+|R_\epsilon|)\leq \delta/2.
\end{equation}
Having  $\epsilon$ fixed, we consider the sequence 
$\{M^l(\rho_\epsilon)I_{\{\rho_\epsilon>k-3\}}\}_{l\in \mathbb N}$,
where $I_A$ is the characteristic function of a set $A$.
We see that
it monotonely pointwise converges to zero. Thus by the Lebesgue theorem we are able to 
find $l=l(\epsilon,\delta)$ such that 
\begin{equation}\label{s12}
C||M^l(\rho_\epsilon)||_{L_2(\{\rho_\epsilon>k-3\})}\leq \delta/2.
\end{equation}
From (\ref{s10}), (\ref{s11}) and (\ref{s12})  we obtain
\begin{equation}\label{s13}
\lim_{\epsilon \to 0} |\{x\in \Omega; \rho_\epsilon(x)>k-3\}|\leq \delta.
\end{equation}
As $\delta>0$ can be chosen arbitrarily small, Theorem \ref{t3} is proved.
\end{proof}

Thanks to Theorem \ref{t3} we are prepared to present the main part of the proof, i.e. the 
pointwise convergence of the density.

\begin{lemma}
 We have
\begin{equation}\label{s14}
\int_\Omega \overline{P(\rho,\theta)\rho}dx \leq \int_\Omega G \rho dx 
\mbox{ \ \ and \ \ } \int_\Omega \overline{P(\rho,\theta)}\rho dx=\int_\Omega G \rho dx
\end{equation}
consequently, $\overline{P(\rho,\theta)\rho}=\overline{P(\rho,\theta)}\rho$ and
up to a subsequence $\epsilon \to 0^+$
\begin{equation}\label{s15}
 \rho_\epsilon \to \rho \mbox{ \ \ strongly in } L_q(\Omega) 
\mbox{ \ \ for any \ \ } q<\infty.
\end{equation}

\end{lemma}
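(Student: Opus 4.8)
The strategy rests on the strong convergence $G_\epsilon\to G$ in $L_2(\Omega)$ proved in Lemma~\ref{l 4.4}, which makes the ``effective viscous flux identity'' essentially trivial and bypasses the commutator / div--curl arguments of the classical Lions theory. Since $G_\epsilon\to G$ strongly in $L_2(\Omega)$ and $\rho_\epsilon\rightharpoonup^*\rho$ in $L_\infty(\Omega)$ (so in particular $\rho_\epsilon\rightharpoonup\rho$ weakly in $L_2(\Omega)$), one has $G_\epsilon\rho_\epsilon\rightharpoonup G\rho$ weakly in $L_2(\Omega)$, i.e. $\overline{G\rho}=G\rho$ a.e. Expanding $G_\epsilon\rho_\epsilon=-(2\mu+\nu)\rho_\epsilon\div\bv_\epsilon+P(\rho_\epsilon,\theta_\epsilon)\rho_\epsilon$ and passing to the weak limit --- using $\theta_\epsilon\to\theta$ strongly in $L_q(\Omega)$, $q<3m$, to pull the temperature out of the nonlinear limits --- one obtains the pointwise relation
\[
\overline{P(\rho,\theta)\rho}-\overline{P(\rho,\theta)}\,\rho=(2\mu+\nu)\big(\overline{\rho\div\bv}-\rho\div\bv\big)\qquad \mbox{a.e. in }\Omega,
\]
where $\overline{\rho\div\bv}$ denotes the weak $L_{3m}$--limit of $\rho_\epsilon\div\bv_\epsilon$ along a suitable subsequence. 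Integrating this identity over $\Omega$ reduces the whole lemma to controlling $\int_\Omega\rho\div\bv\,dx$ and $\int_\Omega\overline{\rho\div\bv}\,dx$.

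For the limit term, the pair $(\rho,\bv)$ satisfies $\div(\overline{K(\rho)\rho}\,\bv)=\div(\rho\bv)=0$ by Theorem~\ref{t3}, with $\rho\in L_\infty(\Omega)$, $\bv\in W^1_{3m}(\Omega)\subset W^1_2(\Omega)$ and $\bv\cdot\bn=0$ on $\partial\Omega$; by the DiPerna--Lions renormalization, valid up to the boundary (cf. \cite{NoStBook}), it satisfies $\div(b(\rho)\bv)+(b'(\rho)\rho-b(\rho))\div\bv=0$, and the choice $b(s)=s\ln s$ gives $\div(\rho\ln\rho\,\bv)+\rho\div\bv=0$; integrating over $\Omega$ and using $\bv\cdot\bn=0$ yields $\int_\Omega\rho\div\bv\,dx=0$. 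This is exactly the second identity in (\ref{s14}), since $\int_\Omega G\rho\,dx=\int_\Omega\overline{P(\rho,\theta)}\,\rho\,dx-(2\mu+\nu)\int_\Omega\rho\div\bv\,dx$. For the approximative term I would renormalize $(\ref{ap-ns})_1$ directly: multiplying by $\ln(\rho_\epsilon+\delta)$, integrating by parts (using $\bv_\epsilon\cdot\bn=0$ and $\partial\rho_\epsilon/\partial\bn=0$), passing $\delta\to0^+$ as in the proof of Lemma~\ref{ene-bound}, and recalling that $g(\rho_\epsilon):=\int_0^{\rho_\epsilon}K(t)\,dt=\rho_\epsilon$ because $\rho_\epsilon\le k$ and $K\equiv1$ on $[0,k)$, one arrives at
\[
\int_\Omega\rho_\epsilon\div\bv_\epsilon\,dx=\epsilon\Big(h\int_\Omega K(\rho_\epsilon)\ln\rho_\epsilon\,dx-\int_\Omega\rho_\epsilon\ln\rho_\epsilon\,dx-\int_\Omega\frac{|\nabla\rho_\epsilon|^2}{\rho_\epsilon}\,dx\Big)\le\epsilon\,C(k),
\]
because $\ln\rho_\epsilon\le\ln k$, $\rho_\epsilon\ln\rho_\epsilon\ge-\tfrac1e$ and the last term has a good sign; hence $\int_\Omega\overline{\rho\div\bv}\,dx=\lim_\epsilon\int_\Omega\rho_\epsilon\div\bv_\epsilon\,dx\le0$. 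Combined with the displayed pointwise relation and $\int_\Omega\rho\div\bv\,dx=0$ this gives $\int_\Omega\overline{P(\rho,\theta)\rho}\,dx\le\int_\Omega G\rho\,dx$, the first assertion of (\ref{s14}).

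From (\ref{s14}) we obtain $\int_\Omega(\overline{P(\rho,\theta)\rho}-\overline{P(\rho,\theta)}\,\rho)\,dx\le0$. On the other hand $s\mapsto P(s,\theta)=P_b(s)+\theta g(s)$ is nondecreasing, $\theta>0$ a.e. by Lemma~\ref{l 4.2} and $\theta_\epsilon\to\theta$ strongly, so passing to the limit in the monotonicity inequality $(P(\rho_\epsilon,\theta_\epsilon)-P(\rho,\theta))(\rho_\epsilon-\rho)\ge0$ yields $\overline{P(\rho,\theta)\rho}\ge\overline{P(\rho,\theta)}\,\rho$ a.e. Hence the two functions coincide a.e., which is the announced identity. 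Splitting off the (strongly convergent, positive) factor $\theta$ and using that $P_b$ and $g$ are separately nondecreasing, both $\overline{P_b(\rho)\rho}-\overline{P_b(\rho)}\,\rho$ and $\overline{g(\rho)\rho}-\overline{g(\rho)}\,\rho$ vanish a.e.; since $P_b(s)=s^\gamma$ on $[0,k]$ is strictly increasing and strictly convex, $\overline{\rho^{\gamma+1}}=\overline{\rho^\gamma}\,\rho$, whence $\int_\Omega(\rho_\epsilon^\gamma-\rho^\gamma)(\rho_\epsilon-\rho)\,dx\to0$. As the integrand is nonnegative and bounds $c\,|\rho_\epsilon-\rho|^{\gamma+1}$ from above, $\rho_\epsilon\to\rho$ in $L_{\gamma+1}(\Omega)$, and since $\{\rho_\epsilon\}$ is bounded in $L_\infty(\Omega)$, interpolation gives (\ref{s15}).

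The one delicate point is the renormalization bookkeeping: justifying that the limit $(\rho,\bv)$ is a renormalized solution of the continuity equation up to the boundary (needed for $\int_\Omega\rho\div\bv=0$), carrying out the $\ln(\rho_\epsilon+\delta)$, $\delta\to0^+$, regularization of the approximative equation cleanly, and keeping precise track of which weak limit (the $\overline{\;\cdot\;}$) each nonlinear product converges to once the temperature factor has been extracted by strong convergence. Everything else reduces to the strong $L_2$--convergence of $G_\epsilon$, the elementary monotonicity inequality, and interpolation in Lebesgue spaces.
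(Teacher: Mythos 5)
Your argument is correct and follows the paper's own proof essentially step for step: the strong $L_2$--convergence of $G_\epsilon$ yields $\overline{G\rho}=G\rho$, testing the approximative continuity equation with $\ln(\rho_\epsilon+\delta)$ gives $\int_\Omega\rho_\epsilon\div\bv_\epsilon\,dx\le\epsilon\,C(k)$, renormalizing the limit equation gives $\int_\Omega\rho\div\bv\,dx=0$, and monotonicity of the pressure in $\rho$ then forces $\overline{P(\rho,\theta)\rho}=\overline{P(\rho,\theta)}\,\rho$ a.e. The only (harmless) divergence is in the last step: the paper extracts compactness from the thermal part of the pressure, using $\theta>0$ a.e.\ (Lemma \ref{l 4.2}) to get $\overline{\rho^2}=\rho^2$ and hence $\rho_\epsilon\to\rho$ in $L_2(\Omega)$, whereas you use the elastic part, $\overline{\rho^{\gamma+1}}=\overline{\rho^\gamma}\,\rho$, together with the elementary inequality $(a^\gamma-b^\gamma)(a-b)\ge c\,|a-b|^{\gamma+1}$.
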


\begin{proof}
Due to Theorem  \ref{t3} we are able to omit $K(\rho)$ in the limit equation. For details we refer to \cite{MuPo} -- section 4,
consideration for (4.16). 

Examine the approximative continuity equation (\ref{ap-ns})$_1$. We use as test function $\ln (\rho_\epsilon+\delta)$ and passing with $\delta\to 0^+$
we obtain
\begin{equation}\label{s16}
\int_\Omega K(\rho_\epsilon) \bv_\epsilon \cdot \nabla \rho_\epsilon dx \geq \epsilon C(k),
\end{equation}

thus Theorem \ref{t3} implies
\begin{equation}\label{s17}
-\int_\Omega \rho_\epsilon \div \bv_\epsilon dx \geq R_\epsilon.
\end{equation}
Applying (\ref{evf_eps}) to (\ref{s17}), passing with $\epsilon \to 0$, then  by the strong convergence of 
$G_\epsilon$ -- see $(\ref{e26})$ -- we conclude that
$\overline{G \rho}=G \rho$, so the first relation in (\ref{s14}) is proved.

Next we consider the limit to the continuity equation, i.e. $\div (\rho \bv)=0$. Testing it by $\ln \rho$
with an application of Friedrich's lemma to have possibility to use test functions with lower regularity we obtain (for details see
\cite{MuPo})
\begin{equation*}\label{s18}
\int_\Omega \rho \div\bv  dx =0.
\end{equation*}
The definition of $G$ -- (\ref{evf}) -- shows the second part of (\ref{s14}).

Due to elementary properties of weak limits we get
$\rho \overline{P(\rho,\theta)} \leq \overline{P(\rho,\theta)\rho}$ a.e. in $\Omega$,
but (\ref{s14}) implies $\int_\Omega (\overline{P(\rho,\theta)\rho}- \overline{P(\rho,\theta)} \, \rho )dx \leq 0$,
hence
\begin{equation*}\label{s19}
\rho \overline{P(\rho,\theta)} =\overline{P(\rho,\theta)\rho}
\mbox{ \ \ a.e.,\ \ \ \ \ \ \ i.e. \ \ }
\overline{\rho^{\gamma+1}}+\overline{\rho^2}\theta=\overline{\rho^\gamma}\rho+\rho^2\theta \mbox{ \ \ a.e.}
\end{equation*}
However,  $\overline{\rho^{\gamma+1}}\geq \overline{\rho^\gamma}\rho$ and $\overline{\rho^2}\theta
\geq \rho^2\theta$, so
\begin{equation*}\label{s20}
\overline{\rho^{\gamma+1}}= \overline{\rho^\gamma}\rho \mbox{ a.e. \ \ \ \ \ and 
\ \ \ \ \ } \overline{\rho^2}\theta
= \rho^2\theta \mbox{ \ \ a.e.}
\end{equation*}
By Lemma \ref{l 4.2}   the temperature $\theta>0$ a.e., we conclude $\overline{\rho^2}=\rho^2$ and for a suitably taken subsequence 
\begin{equation}\label{s21}
\lim_{\epsilon \to 0}||\rho_\epsilon -\rho||^2_{L_2}=\overline{\rho^2}-\rho^2=0.
\end{equation}
Thus the limit (\ref{s21}) implies $\rho_\epsilon \to \rho$ strongly in $L_2(\Omega)$ 
 and by the pointwise boundedness of $\rho_\epsilon$ and  $\rho$
we conclude (\ref{s15}).
\end{proof}

Next, we would like to study the limit of the energy equation. 
The first observation concerns the velocity,
we obtain  
the strong convergence of its gradient.

Recall that from Theorem \ref{t3} and due to the strong convergence of the temperature it follows
\begin{equation*}\label{h1}
P(\rho_\epsilon, \theta_\epsilon) \to p(\rho,\theta)\mbox{ \ \  strongly in } L_2(\Omega),
\end{equation*}
hence (\ref{e26}) implies
\begin{equation}\label{h2}
\div \bv_\epsilon \to \div \bv \mbox{  \ \ \ \ strongly in } L_2(\Omega).
\end{equation}
Additionally we already proved that
\begin{equation}\label{h3}
\rot \bv_\epsilon \to \rot \bv \mbox{ \ \ \ \   strongly in } L_2(\Omega),
\end{equation}
since we observed that the vorticity can be written as sum of two parts, one bounded in $W^1_q(\Omega)$ and the other one going 
strongly to zero
in $L_2(\Omega)$. 

The regularity of systems (\ref{e5}) and (\ref{e6}) and convergences (\ref{h2}) and (\ref{h3})
 imply immediately that
\begin{equation*}\label{h4}
\bv_\epsilon \to \bv \mbox{ \ \ \ \ strongly in }H^1(\Omega).
\end{equation*}

In particular, we get
\begin{equation}\label{h5}
S(\bv_\epsilon):\nabla \bv_\epsilon \to S(\bv) :\nabla \bv \mbox{ \ \ \ strongly  at least in } L_1(\Omega).
\end{equation}

This fact will be crucial in considerations for the limit of the energy equation.
Recall that 
\begin{equation}\label{h6}
\begin{array}{c}
\rho_\epsilon \to \rho \mbox{ in } L_q(\Omega) \mbox{ \ \ for } q <\infty,
\\
\bv_\epsilon \to \bv \mbox{ in } W^1_q(\Omega) \mbox{ \ \ for \ } q < 3m,
\\
\theta_\epsilon \to \theta \mbox{ in } L_{q}(\Omega) \mbox{ \ for \ } q<3m,
\\
\theta_\epsilon \rightharpoonup \theta \mbox{ in } W^1_{\min\{2,\frac{3m}{m+1}\}}(\Omega).
\end{array}
\end{equation}
Consider the weak form of $(\ref{ap-ns})_3$. For a smooth function $\phi$ we have
\begin{equation}\label{h7}
\begin{array}{c}
\displaystyle\int_\Omega (1+\theta_\epsilon^m)\frac{\epsilon +\theta_\epsilon}{\theta_\epsilon} \nabla \theta_\epsilon \cdot
\nabla \phi dx
+\int_{\partial \Omega} L(\theta_\epsilon)(\theta_\epsilon-\theta_0)\phi d \sigma
\\[12pt]
\displaystyle 
-\int_\Omega \left[\left(\int_0^{\rho_\epsilon(x)} K(t)dt\right) \bv_\epsilon \cdot\nabla (\theta_\epsilon \phi)  +
K(\rho_\epsilon)\rho_\epsilon \bv_\epsilon \cdot\nabla (\theta_\epsilon \phi)\right]dx
\\[12pt]
\displaystyle +
\int_\Omega \left[ K(\rho_\epsilon)\rho_\epsilon \bv_\epsilon\cdot \nabla \theta_\epsilon \phi +
\div (\theta_\epsilon \bv_\epsilon \phi) \int_0^{\rho_\epsilon(x)} K(t)dt\right]dx=
\int_\Omega \bS(\bv_\epsilon):\nabla \bv_\epsilon \phi dx.
\end{array}
\end{equation}
Thanks to (\ref{h6}),
\begin{equation*}\label{h11}
(1+\theta_\epsilon^m)\frac{\epsilon + \theta_\epsilon}{\epsilon}\nabla \theta_\epsilon
\rightharpoonup 
(1+\theta^m)\nabla \theta \mbox{  \ \ \ in } L_1(\Omega).
\end{equation*}
Passing to the limit with the last four terms of the l.h.s. of (\ref{h7}) we get
\begin{equation}\label{h12}
\begin{array}{c}
\displaystyle
\int_\Omega \left[- \rho \bv \nabla (\theta \phi) - \rho \bv \nabla (\theta \phi)
+\rho \phi \bv \nabla \theta + \div(\theta \phi \bv) \rho\right]dx
\\[8pt]
\displaystyle
=\int_\Omega \left[- \rho \theta \bv \cdot\nabla \phi + \rho \theta \div \bv \phi\right] dx.
\end{array}
\end{equation}
In (\ref{h12}) we essentially used  the strong convergence of the density.

To control the behavior of the boundary term we note that due to $(\ref{h6})_4$ we see that
$\theta_\epsilon|_{\partial \Omega} \to \theta|_{\partial \Omega}$ strongly in $L_{l+1}(\partial\Omega)$.
Thus recalling (\ref{h5})
 we get  at the limit

\begin{equation}\label{h13}
\begin{array}{c}
\displaystyle
\int_\Omega (1+\theta^m)\nabla \theta \cdot \nabla \phi dx + \int_{\partial \Omega}
L(\theta)(\theta-\theta_0)d\sigma-
\int_\Omega \rho \theta \bv \cdot \nabla \phi dx
\\[8pt]
\displaystyle =
\int_\Omega \bS(\bv):\nabla \bv \phi dx - \int_\Omega \rho \theta \div \bv \phi dx.
\end{array}
\end{equation}

To conclude, note that we may show that the limit functions $\theta$ and $\bv$ belong to $W^1_p(\Omega)$ for any $p<\infty$. To see this,
we introduce the function $\Phi(\theta) = \int_0^\theta (1+ t^m) dt$, similarly as in Section 3, formula (\ref{D1}). Thus from (\ref{h13}) we immediately see
that $\theta \in L_\infty(\Omega)$ and $\bv \in W^1_p(\Omega)$ for any $p<\infty$. Using this fact once more in the energy equation,
we observe that $\theta \in W^1_p(\Omega)$, $p<\infty$. Theorem \ref{t1} is proved.
 
 \bigskip
 
{\footnotesize {\bf Acknowledgement.} The work has been granted by the working program
between Charles and Warsaw Universities. The first author has been partly supported by  the Polish KBN grant
No. 1 P03A 021 30.
 The work of the second author is a part of
the research project MSM 0021620839 financed by MSMT and partly supported by the grant of
the Czech Science Foundation No. 201/05/0164 and by the project LC06052 (Jind\v{r}ich Ne\v{c}as Center for Mathematical Modeling).

\end{document}